\title{Self-Gluing formula of the monopole invariant and its application.}
\author{Gahye Jeong}
\begin{document}
\maketitle
\begin{abstract}
 Given a $4$-manifold $\hat{M}$ and two homeomorphic surfaces $\Sigma_1, \Sigma_2$ smoothly embedded in $\hat{M}$ with genus more than 1, we remove the neighborhoods of the surfaces and obtain a new $4$-manifold $M$ from gluing two boundaries $S^1 \times \Sigma_1$ and $S^1 \times \Sigma_1.$ In this artice, we prove a gluing formula which describes the relation of the Seiberg-Witten invariants of $M$ and $\hat{M}.$ Moreover, we show the application of the formula on the existence condition of the symplectic structure on a family of $4$-manfolds under some conditions. 
\end{abstract}

\tableofcontents
\newtheorem{thm}{Theorem}[section]
\newtheorem{df}[thm]{Definition}
\newtheorem{eg}[thm]{Example}
\newtheorem{prop}[thm]{Proposition}
\newtheorem{cor}[thm]{Corollary}
\newtheorem{rmk}[thm]{Remark}
\newtheorem{lem}[thm]{Lemma}
\numberwithin{equation}{section}
\numberwithin{figure}{section}
\newcommand{\tsigma}{\tilde{\Sigma}}

\section{Introduction}

 The Seiberg-Witten monopole invariant for $4$-dimensional manifolds was introduced by Witten in 1994 and has been actively studied in various aspects. It has been interesting to show how the invariants of different $4$-manifolds are related and how to compute the Seiberg-Witten monopole invariant from such relations. 

  Imagine that we decompose a $4$-manifold $M$ along a certain $3$-manifold $N$ given by the product of the circle with the surface and fill the obtained boundary naturally. We are interested in how to compute the Seiberg-Witten invariant for $M$ in terms of the resulting manifold $\hat{M}$. There exist such formulas for some cases. In \cite{km1}, it is proved in the case that the genus of the surface is more than one and the given $3$-manifold $M$ is separating in $X$. For the case of torus, the formula is proved in \cite{taubes} by a slightly different argument. Note that this is because that the Seiberg-Witten moduli space of 3-dimensional torus is different from the product of the circle and surfaces with genus more than $1$. In \cite{taubes}, he verifies not only  when $M$ is separating but also when $M$ is non-separating inside $X.$ This article mainly presents a statement and a proof of the case when $M$, that is the product of the circle with a surface with genus more than $1$, is separating in $X.$
 
  \begin{thm}[Main Theorem]\label{thm21} Let $M, \hat{M}$ be two closed and oriented four manifolds described previously. Detailed explanation on $M, \hat{M}$ is appeared in Subsection \ref{basicsetting}. Suppose that $b^{+}_2 (M) \ge 1$. It follows that $b^{+}_2 ( \hat{M}) \ge 1$. Suppose that $k \in H^2 (M, \mathbb{Z})$ is a characteristic cohomology class satisfying $k|_{N} = \rho^{*} k_0$ where $k_0 \in H^2 (C; \mathbb{Z})$ satisfies $$\langle k_0, [C] \rangle  = 2g (C) -2$$ and a natural projection $\rho : N \longrightarrow C.$  Let $\mathcal{K} (k)$ be a set of all characteristic classes $k' \in H^2 (M; \mathbb{Z})$ satisfying that the restrictions of $k$ and $k'$ on $\bar{M}$ are isomorphic and $k^2 = k'^2$. Similarly, for $\hat{k} \in H^2 (\hat{M}; \mathbb{Z})$ which is a characteristic cohomology class, let $\mathcal{\hat{K}} (\hat{k})$ be a set of all characteristic classes $\hat{k}' \in H^2 ( \hat{M}; \mathbb{Z})$ with the property that the restrictions of $\hat{k}$ and $\hat{k'}$ on $\bar{M}$ are isomorphic.  Then we have the following formula: $$\sum_{k' \in \mathcal{K}(k)} SW_M (k') = \sum_{\hat{k}' \in \mathcal{\hat{K}}(\hat{k}), \\ \hat{k}'^2 = k^2 - (8g-8)} SW_{\hat{M}} (\hat{k}').$$
 \end{thm}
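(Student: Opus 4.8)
The plan is to compute both invariants by stretching the neck along the common hypersurface $N = S^1 \times C$ and reading off the limiting data on the shared piece $\bar{M}$. Concretely, I would insert a long cylindrical neck $[-T,T]\times N$ into $M$ (a single neck, since $N$ is glued to itself) and the analogous necks into $\hat{M}$ (where the surgery instead separates a neighborhood $D^2\times C$ of each $\Sigma_i$), and let $T\to\infty$. Standard compactness for the Seiberg--Witten equations then reduces each closed invariant to a count of finite-energy solutions on the pieces, matched along solutions of the three-dimensional equations on $N$. The first substantive step is to invoke the dimensional-reduction picture valid for $g(C) > 1$: the curvature term forces the spinor to concentrate, so that the critical points of the Chern--Simons--Dirac functional on $S^1\times C$ are pulled back under $\rho$ from the vortex solutions on $C$, and the only spin$^c$ structure carrying a nonempty moduli space in the top stratum is the one satisfying the adjunction equality $\langle k_0,[C]\rangle = 2g-2$. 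This is exactly the hypothesis placed on $k|_N = \rho^* k_0$, so the neck contributes a single, well-understood Floer stratum rather than a full spectrum of critical points.

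Second, I would package the limiting data as relative Seiberg--Witten invariants of $\bar{M}$ taking values in the monopole Floer groups of $N$ in this extremal spin$^c$ structure, as in \cite{km1}. Viewing $\bar{M}$ as a cobordism from one boundary copy $Y_1\cong S^1\times C$ to the other $Y_2$ gives an endomorphism $F_{\bar M}$ of this Floer group. The two geometric operations then correspond to two algebraic ones: self-gluing the two boundaries closes up the cobordism, so that the trace of $F_{\bar M}$, expanded over the spin$^c$ structures on $M$ restricting to the fixed one on $\bar M$, equals $\sum_{k'\in\mathcal{K}(k)} SW_M(k')$; whereas capping each end with $D^2\times C$ pairs $F_{\bar M}$ against the relative invariant $\psi$ of the disk bundle at both ends, giving $SW_{\hat M}=\langle \psi, F_{\bar M}\,\psi\rangle$. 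The crucial simplification is that, in the extremal spin$^c$ structure, the relevant part of the Floer group is of rank one and $\psi$ is its distinguished generator; for a rank-one space the trace of an endomorphism coincides with its single diagonal matrix entry, so $\operatorname{Tr}(F_{\bar M}) = \langle \psi, F_{\bar M}\,\psi\rangle$. This is the identity that converts the self-gluing count into the capped-off count.

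Third comes the cohomological bookkeeping that turns this identity into the stated summation formula. A characteristic class on $\bar{M}$ admits several extensions over the self-glued neck and over the two caps, differing by classes supplied by the Mayer--Vietoris sequences of the two decompositions; the sets $\mathcal{K}(k)$ and $\mathcal{\hat{K}}(\hat{k})$ are precisely the fibers of these extension maps, so summing over them is what realizes the trace and the pairing as sums of the individual invariants $SW_M(k')$ and $SW_{\hat M}(\hat{k}')$. The degree shift is forced by the dimension formula $\dim = \tfrac14\big(k^2 - (2\chi + 3\sigma)\big)$: a direct computation gives $\big(2\chi(M)+3\sigma(M)\big) - \big(2\chi(\hat M)+3\sigma(\hat M)\big) = 8g-8$, where the Euler-characteristic difference is $4g-4$ since each cap contributes $\chi(D^2\times C) = 2-2g$ while self-gluing along the $\chi = 0$ hypersurface changes nothing, and the signatures agree by Novikov additivity. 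Matching the moduli dimensions on the two sides therefore requires exactly $\hat{k}'^2 = k^2 - (8g-8)$, which is the constraint under the right-hand sum.

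The hard part will be the analysis underlying the first two steps rather than the formal bookkeeping of the third. I expect the main obstacle to be a clean gluing theorem along $S^1\times C$: one must rule out energy loss and bubbling in the limit $T\to\infty$, arrange transversality for the relative moduli spaces with cylindrical ends, and prove that the gluing map is an orientation-preserving diffeomorphism onto the relevant component of the closed moduli space, so that the solution counts genuinely factor as the claimed Floer pairings. A secondary difficulty is verifying that the disk-bundle relative invariant $\psi$ is indeed the generator of the rank-one extremal stratum, together with tracking its sign, since this is the input that makes the trace equal the diagonal pairing; here I would lean most heavily on the genus $g>1$ hypothesis and the structural results of \cite{km1}.
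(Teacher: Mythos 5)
Your proposal follows essentially the same route as the paper: cut $M$ along $N$ to obtain the two-ended manifold $\bar{M}$, use the fact that for $g(C)>1$ the extremal spin$^c$ moduli space on $S^1\times C$ is a single nondegenerate point, and identify both $\sum_{k'\in\mathcal{K}(k)}SW_M(k')$ (via the self-gluing statement, the paper's Theorem \ref{thm211}) and the degree-shifted sum over $\hat{\mathcal{K}}(\hat{k})$ (via the generalized gluing theorem and the $D^2\times C$ relative invariant, the paper's Proposition \ref{prop9}) with one and the same relative Seiberg--Witten invariant of $\bar{M}$. Your Floer-theoretic packaging --- the trace of the cobordism map equals the diagonal pairing because the extremal stratum has rank one --- is precisely the paper's argument restated in TQFT language, and your computation of the $8g-8$ shift from $2\chi+3\sigma$ agrees with the paper's constraint $\hat{k}'^2=k^2-(8g-8)$.
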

 
 Moreover, we express the Seiberg-Witten monopole invariant of $4$-manifolds obtained from gluing two $4$-manifolds along multiple number of certain kinds of $3$-manifolds. It is from the direct application of the product formula in \cite{km1} and Main Theorem \ref{thm21}.
 
 Furthermore, this article presents its application on the existence of the symplectic structure on a certain type of $4$-manifolds in Section 3. This application is motivated by \cite{ni}. In \cite{ni}, the author investigated that for which knot $4$-manifold obtained from Fintushel-Stern surgery has a symplectic structure. Focusing on the fact that Fintushel-Stern surgery can be interpreted into symplectic normal sum of two $4$-manifolds, we will consider a normal connected sum of two simple types of four manifolds,
\begin{itemize}
\item $S^1 \times Y$ for three manifolds $Y$ 
\item $X$ which is a surface bundle over surface with the fiber having genus more than 2. 
\end{itemize} 
\begin{thm} Suppose that $Y$ is an oriented and closed three manifold with a submanifold $\Sigma \subset Y$ homeomorphic to the fiber of $X$ and  $b_1 (Y) = 1.$ Let $X^Y$ be the normal connected sum of $S^1 \times Y$ and $X$ along a surface $\Sigma \subset Y$ and the fiber of $X.$ Then,
$X^Y$ has a symplectic form $\omega$ and its canonical structure $K_{\omega}$ such that $\langle K_{\omega}, [\Sigma] \rangle = 2 g(\Sigma) - 2 $ if and only if Y is a surface bundle over the circle.
\end{thm}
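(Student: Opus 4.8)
The plan is to establish the two implications separately: the ``if'' direction by an explicit symplectic construction, and the harder ``only if'' direction through Taubes' nonvanishing theorem, the gluing formula of this article, and the $3$--dimensional detection of fiberedness by Seiberg--Witten invariants. For the forward direction, suppose $Y$ fibers over $S^1$. Since $b_1(Y)=1$ this fibration is determined up to sign by the generator of $H^1(Y;\mathbb{Z})$, so we may take the gluing surface $\Sigma$ to be a fiber. Then $S^1\times Y$ fibers over $S^1\times S^1=T^2$ with fiber $\Sigma$, hence is a surface bundle over a surface; by Thurston's construction it carries a symplectic form in which the fiber $\{\mathrm{pt}\}\times\Sigma$ is symplectically embedded with trivial normal bundle. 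Likewise $X$ is symplectic with symplectic fiber of genus $g(\Sigma)$ and self-intersection $0$. The two gluing surfaces are thus symplectic, diffeomorphic, and carry opposite (zero) normal Euler numbers, so Gompf's symplectic sum theorem equips $X^Y=(S^1\times Y)\#_\Sigma X$ with a symplectic form $\omega$. The image of the gluing locus is a symplectic surface $\Sigma\subset X^Y$ of genus $g(\Sigma)$ and self-intersection $0$, so the symplectic adjunction equality yields $\langle K_\omega,[\Sigma]\rangle = 2g(\Sigma)-2$.

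For the reverse direction, suppose $X^Y$ admits a symplectic form $\omega$ with $\langle K_\omega,[\Sigma]\rangle=2g(\Sigma)-2$. I would first record that $X^Y$ has $b_2^+>1$, inheriting its positive part from the surface-bundle summand $X$, so Taubes' theorem gives $SW_{X^Y}(\pm K_\omega)=\pm 1$ with $K_\omega$ a basic class realizing the adjunction bound along $\Sigma$. The gluing locus $N=S^1\times\Sigma$ is separating with $g(\Sigma)\geq 3>1$, so the separating product formula of \cite{km1} (the gluing along $S^1\times\Sigma$ that underlies Theorem \ref{thm21}) expresses $SW_{X^Y}$ as a sum of products $SW_{S^1\times Y}(\mathfrak{s}_1)\,SW_X(\mathfrak{s}_2)$ over $\mathrm{Spin}^c$ structures restricting compatibly to $N$. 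Since $X$ is a minimal symplectic surface bundle its relevant invariant is $\pm 1$ and the neck factor is a unit, so $SW_{X^Y}(K_\omega)=\pm 1$ forces a single $\mathrm{Spin}^c$ structure $\mathfrak{s}_1$ on $S^1\times Y$ with $SW_{S^1\times Y}(\mathfrak{s}_1)=\pm 1$ and $\langle c_1(\mathfrak{s}_1),[\Sigma]\rangle = 2g(\Sigma)-2$.

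Finally I would feed this extremal nonvanishing into the three-dimensional theory. By the theorem of Meng and Taubes the function $\mathfrak{s}\mapsto SW_{S^1\times Y}(\mathfrak{s})$ is the Milnor--Turaev torsion of $Y$, and the top degree of its support in the grading dual to $\Sigma$ computes the Thurston norm of $[\Sigma]$; the identity $\langle c_1(\mathfrak{s}_1),[\Sigma]\rangle=2g(\Sigma)-2$ places $\mathfrak{s}_1$ at precisely this extremal degree with leading coefficient $\pm1$. By the fiberedness criterion of McMullen and Friedl--Vidussi in the case $b_1(Y)=1$ (the circle of ideas behind \cite{ni}), a monic torsion polynomial whose degree equals the Thurston norm certifies that $Y$ fibers over $S^1$ with fiber $\Sigma$, giving the conclusion.

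I expect the reverse direction to be the main obstacle, and within it the delicate point is upgrading ``nonzero'' to ``unit leading coefficient'': fiberedness detection requires the extremal Seiberg--Witten (equivalently torsion) coefficient to be exactly $\pm1$, so I must check that both the neck factor in the gluing formula and $SW_X$ at the relevant class are units, and that the hypothesis $\langle K_\omega,[\Sigma]\rangle=2g(\Sigma)-2$ genuinely pins the surviving $\mathrm{Spin}^c$ structure to the top of the Thurston-norm ball. A secondary technical issue is the borderline value $b_2^+(S^1\times Y)=b_1(Y)=1$, which makes $SW_{S^1\times Y}$ chamber-dependent; this is circumvented by extracting the invariant inside $X^Y$, where $b_2^+>1$ and the invariant is chamber-independent.
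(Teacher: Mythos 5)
Your ``if'' direction is fine and is essentially the paper's (Thurston/Gompf symplectic sum plus adjunction). The ``only if'' direction, however, has a genuine gap at its final and most important step. You propose to conclude fiberedness of $Y$ from the fact that the Meng--Taubes/Milnor torsion of $Y$ (equivalently $SW_{S^1\times Y}$) has a unit coefficient in the extremal degree dual to $\Sigma$. This criterion is false: monicity of the \emph{ordinary} Alexander polynomial together with degree equal to the Thurston norm is only a necessary condition for fiberedness, not a sufficient one. McMullen's inequality and Meng--Taubes give obstructions in one direction only, and there are closed $3$-manifolds with $b_1=1$ (e.g.\ zero-surgeries on non-fibered knots whose Alexander polynomial is monic of degree $2g$) that pass your test without fibering. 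The actual detection theorem of Friedl--Vidussi (Theorem \ref{thm39}, cited in the paper) is about \emph{twisted} Alexander polynomials: if $\phi$ is not fibered, then some representation onto a finite group makes the twisted polynomial vanish. Since $SW_{S^1\times Y}$ only computes the untwisted polynomial (Theorem \ref{thm37}), no contradiction can be extracted by working on $X^Y$ itself, which is what your argument attempts.

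This is exactly why the paper's proof is structured around finite covers rather than around the separating product formula on $X^Y$. Given non-fibered $Y$, Proposition \ref{thm36} (a reformulation of Theorem \ref{thm39} via Proposition \ref{prop38}) produces a finite cover $\tilde{Y}\to Y$ whose Alexander polynomial pushes forward to zero; Lemma \ref{thm1} extends the induced cover of $\Sigma$ to a cover of the surface bundle $X$; and Lemma \ref{covering} glues $r$ copies of $S^1\times\tilde{Y}$ and $l$ copies of $\tilde{X}$ along a complete bipartite pattern to obtain a cover $\tilde{X^Y}\to X^Y$ with $b_2^+>1$. If $X^Y$ were symplectic with the stated canonical class, the pulled-back form would make $\tilde{X^Y}$ symplectic, so Taubes forces $SW_{\tilde{X^Y}}(K_\Omega)=\pm 1$; on the other hand, cutting $\tilde{X^Y}$ along the lifts of $S^1\times\Sigma$ involves \emph{non-separating} copies (the cover retracts onto $K_{r,l}$), so one needs the self-gluing formula (Theorem \ref{thm21}) combined with the Morgan--Szab\'o--Taubes formula, i.e.\ Theorem \ref{multiple_gluing}, to express this invariant through $SW_{S^1\times\tilde{Y}}$, and the vanishing of $\pi_*(\Delta_{\tilde{Y}})$ then kills the relevant sums, giving the contradiction. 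Your proposal never invokes the covering trick or the self-gluing formula, and without them the argument cannot close; a secondary, repairable issue is that you assert $b_2^+(X^Y)>1$ without proof, whereas the paper arranges $b_2^+>1$ only after passing to the cover.
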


\section*{Acknowledgement}
 The author appreciates her advisor, Professor Yi Ni for his suggestion of the topic of the paper and for essential ideas. She thanks to Nick Salter for suggesting the proof of Lemma \ref{lem1}. She also thanks to Siqi He to explain the standard gluing and limiting argument of the Seiberg-Witten theory.

\section{Self-gluing formula of the Seiberg-Witten invariant}

\subsection{The Seiberg-Witten equation and the invariant.}
 In this subsection, we will briefly review various types of the Seiberg-Witten equations and the invariants for $4$-manifolds. We will follow \cite{jm}, \cite[Section 2]{km1} and \cite{hutchings}. Let $X$ be a smooth, oriented Riemannian $4$-manifold with $Spin^c$-structure and let $g$ be a Riemannian metric on $X$. Let $\tilde{P}$ be a $Spin^c$-strucure on $(X, g)$. There is a corresponding vector bundle $S(\tilde{P})$ over X, that is induced from the representation of $Spin^c (4)$. $S(\tilde{P})$ is decomposed into $S^{+} (\tilde{P})$ and $S^{-} (\tilde{P})$ from the decomposition of the representation. We call these vector bundles {\bf spinor bundles} and sections of these vector bundles {\bf spinors}. 

 For a unitary connection $A$ on the determinant line bundle of $\tilde{P}$ and a section $\psi$ of $S^{+} ( \tilde{P})$, the Seiberg-Witten equations associated to a $Spin^c$-structure $\tilde{P}$ are defined by ${\bf SW}$: \begin{center}
 $F^{+}_A = q (\psi)$ \\$ D_A (\psi) =0,$\end{center} where $q$ is a natural quadratic form from $S^{+} (\tilde{P})$ to $\Lambda^2_{+} ( X ; i \mathbb{R})$ and $D_A$ is the usual Dirac operator using the Levi-civita connection on the frame bundle of $X$ and the connection $A$. When $X$ is closed, the index of the elliptic system defined from the equations $SW$ is given by $$d(\tilde{P}) = \frac{c_1 (\mathcal{L})^2 - (2 \chi (X) + 3 \sigma(X))}{4}$$ from the Atiyah-Singer index theorem. We define the moduli space $\mathcal{M}$ by the quotient of the space of solutions to $SW$ modulo the actions of gauge transformations. The moduli space itself is compact, however to make the moduli space smooth, we need a perturbation by a purely imaginary self-dual two-form.
 
  Specifically, for a generic $C^{\infty}$ self-dual real two-form $h$ on $X$, we define the perturbed Seiberg-Witten equations {$\mathbf{SW_h:}$}
\begin{center} $F_A^{+}=q(\psi) + ih$ \\ $D_A ( \phi) = 0.$ \end{center} The solution set $\mathfrak{m}$ has $C^{\infty} (X, S^1)$ action on itself, where $C^{\infty} (X, S^1)$ is a set of smooth functions from $X$ to $S^1.$ It is defined for $g \in C^{\infty} (X, S^1),$
\begin{center}
 $g(A,\psi) = (A - 2 g^{-1} dg, g\psi),$
\end{center} which is called {\bf gauge transformation}. Let the moduli space $\mathcal{M} (\tilde{P}, h)$ be a quotient of the solution space by $C^{\infty} (X, S^1).$ We call $\mathcal{M}(\tilde{P}, h)$ the moduli space of the $SW_h$ equation. Moreover, we define $\mathcal{M}^{0} (\tilde{P}, h) $ to be a quotient of the solution space by $\left\{ \phi \in C^{\infty} (X,S^1) : \phi(*) = 1 \right\}$, where $* \in X$ is a fixed base-point. When $\mathcal{A}$ is a set of unitary connections of the determinant line bundle of $\tilde{P}$, we call $$\mathcal{B}^{*}(\tilde{P}) := (\mathcal{A} \times S^{+} (\tilde{P})) / \text{gauge transformation}$$ {\bf configuration space of $\tilde{P}$}.

 We define $b_2^{+}(X)$ to be the dimension of any maximal subspace of the second cohomology $H^2 (X, \mathbb{R})$ on which the intersection form is positive definite. When $b_2^{+} >0,$ for generic $h$, the moduli space $\mathcal{M} (\tilde{P}, h)$ is smooth and  $\mathcal{M}^{0} (\tilde{P}, h)$ has a principal circle bundle structure over the moduli space $\mathcal{M} (\tilde{P}, h)$ since we expect no reducibles in the solution space.
 
 Now we will define the Seiberg-Witten invariant from this moduli space. We need to orient the moduli space. To orient the moduli space is equivalent to orient $H^0 (X, \mathbb{R}), H^1 (X, \mathbb{R}), H^2_{+} ( X, \mathbb{R})$. With a properly fixed orientation on the moduli space, we have a principal circle bundle $\mathcal{M}^{0} (\tilde{P}, h) $ over the moduli space $\mathcal{M} (\tilde{P}, h).$ Let $c\in H^2(\mathcal{M} (\tilde{P}, h))$ be the corresponding Chern class of this circle bundle. If the dimension of the moduli space is $2l$, which is even, then we define the Seiberg-Witten invariant to be  $\displaystyle \int_{\mathcal{M} (\tilde{P}, h)} c^l$. If the dimension is odd, then we define the invariant to be $0$.  Moreover, if $b^{+}_2 (X) >1$, then it is independent from the choice of the metric, i.e. an invariant of $X$. However, if $b^{+}_2 (X) =1$, the invariant depends on both the manifold $X$ and the metric.
 
 Therefore, when $b^{+}_2 (X) >1,$ this gives a function $$SW : \left\{Spin^c \text{-structures on $X$} \right\} \longrightarrow \mathbb{Z}.$$ It is often to use a function on the characteristic classes which amalgamates the information of $SW.$ Let $C(X) \in H^2 (X, \mathbb{Z})$ be the subset of characteristic cohomology classes i.e. its mod two reduction is equal to the second Stiefel-Whitney class. We have  $SW_X : C(X) \to \mathbb{Z}$ which is defined by $$SW_X (k) = \sum_{\substack{\text{$Spin^c$-structure $s$}\\ c_1(s) = k}} SW (s).$$

 Moreover, we will introduce the Seiberg-Witten invariant for $3$-manifold. Let $N$ be a $3$-dimensional Riemmanian manifold. Let $\tilde{P_N} \to N$ be a $Spin^c$-structure on $N.$ There is an associated irreducible complex spin bundle $S(\tilde{P_N})$ unique up to isomorphism. The $3$-dimensional Seiberg-Witten equations for $\tilde{P_N} \to N$ are given by $\mathbf{SW^3}:$
\begin{center}
$F_A = q(\psi)$ \\
$D_A (\psi) = 0,$
\end{center}
where $A$ is a unitary connection on the determinant line bundle of $\tilde{P_N}$ and $\psi$ is a section of complex spin bundle $S(\tilde{P_N}).$ Likewise, we have the perturbed Seiberg-Witten equation for $3$-manifolds. For any sufficiently small closed real two-form $h$ on $N$, we define the perturbed Seiberg-Witten equations $\mathbf{SW^3_h:}$
\begin{center} $F_A=q(\psi) + ih$ \\ $D_A ( \phi) = 0.$ \end{center} Similarly, we can define the configuration space and the moduli space of $SW^3_h$. Henceforth, let $\mathcal{C} (\tilde{P_N}), \mathcal{G} ( \tilde{P_N})$ and $\mathcal{B}^{*} (\tilde{P_N})$ denote $\mathcal{A} \times S^{+} (\tilde{P_N})$, the gauge transformation group and $\mathcal{C} (\tilde{P_N}) / \mathcal{G} ( \tilde{P_N})$ respectively.
\subsection{Self-Gluing formula}
\subsubsection{Basic setting}\label{basicsetting}
 Let $M$ be a closed oriented four-manifold and let $N = S^1 \times C \subset M,$ where $C$ is an oriented surface with genus $g >1$. When $N$ is a separating submanifold inside $M$, let $X \sqcup Y = M \setminus N.$ We fill $X, Y$ by gluing $D^2 \times C$ naturally along the boundaries $S^1 \times C$. $\hat{X}, \hat{Y}$ denote the resulting manifolds. Morgan, Szabo and Taubes showed the product formula on the Seiberg-Witten invariants in \cite{km1}. In other words, the Seiberg-Witten invariants of $\hat{X}$ and $\hat{Y}$ with properly fixed $Spin^c$-structures determine the Seiberg-Witten invariant of $M$.

 In this note, we want to investigate the case when $N$ is non-separating inside $M$. We consider a neighborhood $ N$, $nbd(N) = S^1 \times C \times [0,1]$ inside $M$. Let $\bar{M} = M \setminus nbd(N)$. Hence $\bar{M}$ is a smooth $4$-manifold with two boundary components $N \times \left\{ 0 \right\}$ and $N \times \left\{ 1 \right\}$. We fill the two boundaries of $\bar{M}$ by a trivial map $$D^2 \times C \hookrightarrow \bar{M}.$$ Let the resulting manifold be $\hat{M}$. 
  
 Conversely, we obtain $M$ from $\hat{M}$ by the following operation. When there are two $4$-manifolds with boundary $D^2 \times C_1, D^2 \times C_2$ smoothly embedded inside $\hat{M}$. Let $$\bar{M} =\hat{M}\setminus (D^2 \times C_1 \sqcup D^2 \times C_2).$$ After that, we glue the two boundaries of $\bar{M}$ by a map \begin{center} $S^1 \times C_1 \longrightarrow S^1 \times C_2$\\ $(x,y) \longrightarrow (\bar{x}, f(y)).$\end{center} $\bar{x}$ denotes a complex conjugate of $x$ and $f : C_1 \longrightarrow C_2$ is a diffeomorphism. Consequently, the resulting manifold becomes the original manifold $M$. In this section, we will verify the relation between the Seiberg-Witten invariants of $M$ and $\hat{M}$.
 
\subsubsection{The statement of the self-gluing formula}
 
  We introduce the main theorem of this section.
 
 \begin{thm}[Main Theorem] Let $M, \hat{M}$ be two closed and oriented four manifolds related as described previously in Subsection \ref{basicsetting}. Suppose that $b^{+}_2 (M) \ge 1$. It follows that $b^{+}_2 ( \hat{M}) \ge 1$. Suppose that $k \in H^2 (M, \mathbb{Z})$ is a characteristic cohomology class satisfying $k|_{N} = \rho^{*} k_0$ where $k_0 \in H^2 (C; \mathbb{Z})$ satisfies $$\langle k_0, [C] \rangle  = 2g -2$$ and a natural projection $\rho : N \longrightarrow C.$  Let $\mathcal{K} (k)$ be a set of all characteristic classes $k' \in H^2 (M; \mathbb{Z})$ satisfying that the restrictions of $k$ and $k'$ on $\bar{M}$ are isomorphic and $k^2 = k'^2$. Similarly, for $\hat{k} \in H^2 (\hat{M}; \mathbb{Z})$ which is a characteristic cohomology class, let $\mathcal{\hat{K}} (\hat{k})$ be a set of all characteristic classes $\hat{k}' \in H^2 ( \hat{M}; \mathbb{Z})$ with the property that the restrictions of $\hat{k}$ and $\hat{k'}$ on $\bar{M}$ are isomorphic.  Then we have the following formula: $$\sum_{k' \in \mathcal{K}(k)} SW_M (k') = \sum_{\substack{\hat{k}' \in \mathcal{\hat{K}}(\hat{k}), \\ \hat{k}'^2 = k^2 - (8g-8)}} SW_{\hat{M}} (\hat{k}').$$
 \end{thm}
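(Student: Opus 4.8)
The plan is to prove the formula by a neck-stretching and gluing argument in the spirit of \cite{km1}, exploiting the fact that $M$ and $\hat{M}$ are assembled from the \emph{same} piece $\bar{M}$ by two different ways of closing up its two boundary components. First I would insert a long cylindrical neck $N \times [-T,T]$, with $N = S^1 \times C$, along the embedded copy of $N$ in $M$, and simultaneously insert long necks near each of the two filling regions $D^2 \times C$ in $\hat{M}$. Letting $T \to \infty$, the standard compactness and gluing theory for the Seiberg-Witten equations on manifolds with cylindrical ends expresses each solution on $M$ (resp. $\hat{M}$) as a solution on $\bar{M}$ with cylindrical ends, asymptotic along each end to a solution of the three-dimensional equations $SW^3$ on $N$, glued to the appropriate cap data. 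In this way both $SW_M$ and $SW_{\hat{M}}$ are reduced to the relative Seiberg-Witten invariant of the common piece $\bar{M}$.

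The essential input, which I would take from \cite{km1}, is the structure of the three-dimensional moduli space on $N = S^1 \times C$ for the $Spin^c$ structure whose first Chern class restricts to $C$ as $k_0$ with $\langle k_0, [C] \rangle = 2g-2$. Since this class is non-torsion on $N$ there are no reducibles, and after a small perturbation the moduli space of $SW^3_h$ consists of a single nondegenerate irreducible point $\theta$. Consequently the relevant Seiberg-Witten-Floer theory of $N$ in this structure has rank one: every finite-energy solution on a cylindrical end must converge to $\theta$. This is precisely what makes the gluing tractable, as it removes all Floer-theoretic complications and reduces both closings to the evaluation of one numerical relative invariant of $\bar{M}$, namely its invariant with the end data fixed to be $\theta$ on each end.

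With this in hand the two closings are compared directly. Capping an end of $\bar{M}$ with $D^2 \times C$ contributes the relative invariant of $D^2 \times C$ asymptotic to $\theta$, which by the computation in \cite{km1} equals the generator, i.e. contributes a factor $1$; at the same time it changes the index formula, since replacing the neck by the two caps alters $2\chi + 3\sigma$ by exactly $-(8g-8)$, because $\chi(\hat{M}) - \chi(M) = 2\chi(D^2\times C) = 4 - 4g$ while $\sigma$ is unchanged (each cap has signature zero and signature is additive under the gluing). This index shift is exactly the constraint $\hat{k}'^2 = k^2 - (8g-8)$ recorded in the statement, and it matches the dimensions of the relevant moduli spaces on $M$ and $\hat{M}$. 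On the other side, self-gluing the two ends of $\bar{M}$ by the map $(x,y) \mapsto (\bar{x}, f(y))$ — whose complex conjugation on the $S^1$ factor reverses the boundary orientation, so that the two asymptotic copies of $\theta$ are correctly identified — amounts in the rank-one theory to taking the trace of the relative invariant, which is again the same underlying number $\langle \Phi_{\bar{M}}, \theta \otimes \theta \rangle$ that the double cap evaluates. Hence both $\sum_{k' \in \mathcal{K}(k)} SW_M(k')$ and $\sum_{\hat{k}'} SW_{\hat{M}}(\hat{k}')$ equal this one invariant of $\bar{M}$, giving the claimed equality. The sums over $\mathcal{K}(k)$ and $\hat{\mathcal{K}}(\hat{k})$ enter because several $Spin^c$ structures on $M$ (resp. $\hat{M}$) restrict to the same structure on $\bar{M}$, differing only over the neck (resp. the caps); collecting all of them with the prescribed square is exactly what the relative invariant of $\bar{M}$ records.

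I expect the main obstacle to be the gluing and compactness analysis underlying the limit $T \to \infty$: establishing an orientation-compatible bijection between solutions on $M$ (resp. $\hat{M}$) and the glued or capped solutions on $\bar{M}$, ruling out energy loss and bubbling, and checking surjectivity of the linearized gluing map so that the signed counts agree. The genuinely new point relative to \cite{km1} is the self-gluing: there $\bar{M}$ is disconnected, namely $X \sqcup Y$, and the formula is a product $SW_{\hat{X}} \cdot SW_{\hat{Y}}$, whereas here $\bar{M}$ is connected and its two ends are glued to each other, so the product is replaced by a trace and one must verify that the relative invariant pairs with itself correctly. Confirming that this self-pairing is computed by the same rank-one Floer data, and that the orientation conventions survive the identification of the two ends under $(x,y)\mapsto(\bar{x},f(y))$, is where the argument will require the most care.
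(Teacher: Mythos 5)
Your proposal follows essentially the same route as the paper: both arguments reduce $\sum SW_M$ and $\sum SW_{\hat{M}}$ to the relative Seiberg--Witten invariant of the common two-ended piece $\bar{M}$, resting on the single-point (rank-one) moduli space of $S^1 \times C$, the relative invariant $1$ of the cap $D^2 \times C$ with its $-(8g-8)$ index shift, and the observation that the sums over $\mathcal{K}(k)$ and $\hat{\mathcal{K}}(\hat{k})$ enumerate the $Spin^c$ structures restricting to a fixed one on $\bar{M}$. What you phrase as ``taking the trace'' is implemented in the paper as a direct diffeomorphism between the moduli space of $\bar{M}$ and the union of moduli spaces of the self-glued manifold (Theorem \ref{thm211}), and your capping step is the paper's Generalized Gluing Theorem \ref{thm2} combined with Proposition \ref{prop9}, so the two arguments coincide in structure and in the points identified as requiring analytic care.
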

 
 We will call this formula {\bf self-gluing formula}. Prior to the proof of the self-gluing formula, we will briefly review the product formula \cite[Theorem 3.1]{km1} and a sketch of its proof which  gives the idea.
\begin{thm}\label{productformula}\cite[Theorem 3.1]{km1} Supppose that $b^{+}_2 (\hat{X}), b^{+}_2 (\hat{Y}) \ge 1.$ It follows that $b^{+}_2 (M) \ge 1.$ Suppose that $k \in H^2(M; \mathbb{Z})$ is a characteristic cohomology class satisfying $k|_{N} = \rho^{*} k_0$ where $k_0 \in H^2(C; \mathbb{Z})$ satisfies $$\langle k_0, [C] \rangle = 2g-2.$$ Let $k_X$ and $k_Y$ be the restrictions of $k$ to $X$ and $Y.$ Consider the set $\mathcal{K} (k)$ of all characteristic classes $k' \in H^2 (M; \mathbb{Z})$ with the property that $k'|_X = k_X, k'|_Y = k_Y$ and $k'^2 = k^2$. We define $\mathcal{K}_X (k)$ to be all $l \in H^2 (X; \mathbb{Z})$ which are characteristic and satisfy $l|_X = k_X.$ The set $\mathcal{K}_Y (k)$ is defined analogously. Then for appropriate choices of orientations of $H^1 (M), H^1(X), H^1(Y)$ and $H^2_{+} (M), H^2_{+} (X), H^2_{+} (Y)$ determining the signs of the Seiberg-Witten invariants, we have 
\begin{equation}
\sum_{k' \in \mathcal{K}(k)} SW_M (k') = (-1)^{b(M,N)} \sum SW_{\hat{X}} (l_1) SW_{\hat{Y}} (l_2)
\end{equation}
where $b(M,N) = b_1 (X, N) b^{\ge}_2 (Y, N),$ and the sum on the right-hand-side extends over all pairs $(l_1, l_2) \in \mathcal{K}_X (k) \times \mathcal{K}_Y (k)$ with the property that $$l_1^2 + l_2^2 = k^2 - (8g - 8).$$ 
\end{thm}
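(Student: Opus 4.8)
The plan is to prove the product formula by the neck-stretching and gluing technique for the Seiberg-Witten equations, which is the method of Morgan, Szab\'o and Taubes. Before the analysis I would dispose of the elementary topology: that $b^{+}_2(\hat{X}), b^{+}_2(\hat{Y}) \ge 1$ forces $b^{+}_2(M) \ge 1$ is a Mayer-Vietoris computation for the decomposition $M = X \cup_N Y$, since a maximal positive subspace on either side survives into $H^2(M;\mathbb{R})$ up to the classes supported on the neck $N = S^1 \times C$. The substance of the theorem is analytic, and its cleanness rests entirely on the special geometry of the extremal $Spin^c$ structure on $N$.

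The first real step is therefore to understand the three-dimensional equations $SW^3$ on $N = S^1 \times C$ for the $Spin^c$ structure whose restriction $k_0$ to $C$ satisfies $\langle k_0, [C]\rangle = 2g - 2$. Writing the equations on the product and using the $S^1$-invariance, solutions reduce to the two-dimensional vortex equations on $C$ in the extremal degree $2g - 2$; here the hypothesis $g > 1$ is essential, as it singles out the canonical divisor and rules out the reducible and higher-dimensional phenomena that occur for the torus. I would show that the resulting limiting configuration set is an explicit, nondegenerate (Morse-Bott) model, so that the asymptotic-limit map for solutions on the ends is well controlled. This is the input that separates the genus $>1$ case treated here from the torus case of \cite{taubes}.

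Next I would form the elongated manifold $M_T$ by inserting a long cylinder $N \times [-T,T]$ between $X$ and $Y$, together with the cylindrical-end manifolds $X^{+} = X \cup (N \times [0,\infty))$ and $Y^{+}$; since $SW_M$ is computed from the moduli space for any admissible metric (in the appropriate chamber if $b^{+}_2(M)=1$), I may use $M_T$ with $T$ large. Compactness and the exponential-decay estimates on the neck show that as $T \to \infty$ every finite-energy solution on $M_T$ splits into a pair of finite-energy solutions on $X^{+}$ and $Y^{+}$ with matching asymptotic limits in the model set above, and the gluing theorem upgrades this to a diffeomorphism of $\mathcal{M}_{M_T}$ onto the fiber product of $\mathcal{M}(X^{+})$ and $\mathcal{M}(Y^{+})$ over that limiting set. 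Capping $X$ by $D^2 \times C$ amounts to filling the end, and the extremal-degree vortex analysis again provides the unique, nondegenerate finite-energy solution on the cap $(D^2 \times C)^{+}$ asymptotic to the distinguished limit; gluing this cap solution identifies $\mathcal{M}(X^{+})$ with the closed moduli space $\mathcal{M}_{\hat{X}}$ used to define $SW_{\hat{X}}$, and likewise for $Y$. Evaluating the circle-bundle Chern class $c$ over the glued moduli space then turns the fiber product into the product $SW_{\hat{X}}(l_1)\, SW_{\hat{Y}}(l_2)$ summed over matching pairs.

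Finally I would carry out the cohomological and orientation bookkeeping. A characteristic class $k' \in \mathcal{K}(k)$ is exactly the data of a compatible pair $(l_1,l_2) \in \mathcal{K}_X(k) \times \mathcal{K}_Y(k)$ of extensions of $k_X,k_Y$ across the two caps, the ambiguity in each extension living in $H^2(D^2\times C)\cong H^2(C)$, i.e. in twisting by the fiber class; this is precisely the information encoded by the three $\mathcal{K}$-sets. The Atiyah-Singer formula for $d(\tilde{P})$, applied on $M$ and on the two caps, yields the index matching and hence the constraint $l_1^2 + l_2^2 = k^2 - (8g-8)$, with the constant $8g-8 = 2(4g-4)$ receiving one contribution of $4g-4$ from each capping region. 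The sign $(-1)^{b(M,N)}$ with $b(M,N) = b_1(X,N)\, b^{\ge}_2(Y,N)$ comes from comparing the canonical orientation of the determinant line of the linearized operator on $M$ with the product orientation induced from $X$ and $Y$, a Mayer-Vietoris computation on $H^1$ and $H^2_{+}$. The main obstacle is this gluing-plus-orientation step taken together: proving that the gluing map is a genuine orientation-compatible diffeomorphism --- in particular that transversality is preserved and that no solutions are lost in the $T\to\infty$ limit --- and pinning down the exact sign, both of which demand careful control of the exponential decay on the neck and of the excess index over the limiting model.
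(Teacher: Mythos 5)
Your proposal follows essentially the same route as the paper, which quotes this result from Morgan--Szab\'o--Taubes \cite{km1} and sketches exactly your argument: cylindrical-end moduli spaces for $X$ and $Y$, a neck-stretching/gluing diffeomorphism identifying the moduli space of $M$ with the union of products of the two cylindrical-end moduli spaces with matched Chern integrals, capping by $D^2 \times C$ to equate the relative invariants with $SW_{\hat{X}}$ and $SW_{\hat{Y}}$, and the index and orientation bookkeeping yielding the constraint $l_1^2 + l_2^2 = k^2 - (8g-8)$ and the sign $(-1)^{b(M,N)}$. The only cosmetic difference is that you describe the limiting configurations on $S^1 \times C$ as a nondegenerate Morse--Bott model, where the paper uses the sharper statement (Proposition 2.2, quoting \cite{km1}) that after perturbing by a small harmonic one-form the moduli space on $S^1 \times C$ is a single smooth point with trivial Zariski tangent space, which is what makes the fiber product an honest product.
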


 We summarize a sketch of the proof of Theorem \ref{productformula}. First, they defined the moduli space of the perturbed Seiberg-Witten equations for one cylindrical end $4$ manifolds $X$ and $Y$. They showed the moduli spaces are smooth and compact with the correct dimension, which is given by the index theorem. Next, by gluing together two configuration spaces of $X$ and $Y$, they showed the union of the product of the two moduli spaces with fixed Chern integral is diffeomorphic to the moduli space of the original manifold $M$. Third, they defined the relative Seiberg-Witten invariants for one cylindrical end $4$ manifolds. The above diffeomorphism between moduli spaces implies that the Seiberg-Witten invariant of $M$ can be represented by the relative Seiberg-Witten invariants of $X$ and $Y$. Lastly, the relative Seiberg-Witten invariants of $X$ and $Y$ are equal to the Seiberg-Witten invariants of $\hat{X}$ and $\hat{Y}$ respectively, which implies the statement. 
\subsubsection{The outline of Section 2.}

 In Subsection \ref{sub23}, we define the moduli space of the perturbed Seiberg-Witten equation for $4$-manifolds with a certain type of cylindrical ends. We show this moduli space is compact, smooth and has a nice decaying property. Furthermore, we explain how the orientation of the moduli space is given. In Subsection \ref{sub25}, we define the relative Seiberg-Witten invariant for $4$-manifolds with cylindrical ends and show that the Seiberg-Witten invariant of the original closed $4$-manifold $M$ is associated with the relative Seiberg-Witten invariant of the resulting manifold $\bar{M}$ obtained from cutting along non-separating $3$-submanifold.  Lastly, we show that the relative Seiberg-Witten invariants of $\bar{M}$ is equal to the Seiberg-Witten invariant of $\hat{M}$ up to orientations in Subsection \ref{sub26} by generalizing the gluing theorem of the Seiberg-Witten moduli spaces for $4$-manifolds with boundary \cite[Theorem 9.1.]{km1}. In subsection \ref{sub32}, we state the gluing formula when we glue two $4$-manifolds along not only one connected surface, but multiple oriented surfaces.

\subsection{The moduli space of $4$-manifolds with cylindrical ends.}\label{sub23}
 We introduce the moduli space for $4$-manifolds with a certain type of cylindrical ends $[0, \infty) \times S^1 \times C,$ where $C$ is an oriented surface which is not necessarily connected. We will mainly follow the way to define this moduli space in \cite[Section 6]{km1} and \cite[Section 24]{km2}. Notice that most definitions and results in this section are exactly same with the statement in \cite[Section 6]{km1} except that we allow disconnected ends. 
 
 Suppose that $X$ is a $4$-manifold with cylindrical end $[0, \infty) \times S^1 \times C$ when $C$ is an oriented surface, which is not necessarily connected. Let $\displaystyle C = \bigsqcup_{\alpha} C_{\alpha}$, where $C_{\alpha}$ is a connected component of $C$. Suppose that the genus of $C_{\alpha}$ are all equal to $g$, which is at least $2$. Suppose that $N = S^1 \times C$. We fix a $Spin^c$-structure $\tilde{P}$ whose determinant line bundle restricted on $S^1 \times C_{\alpha}$ is isomorphic to the pullback of the line bundle on $C_{\alpha}$ whose degree is equal to $(2g-2)$ on $C_{\alpha}$. Before describing the moduli space of $X$, we briefly review the Seiberg-Witten equations of two manifolds, 
\begin{enumerate}
\item $S^1 \times C$ for a connected oriented surface $C$ \item cylindrical $4$-manifolds, which is diffeomorphic to $I \times N$ for a $3$-manifold $N.$
\end{enumerate}

\subsubsection{Preliminaries} Let $N = S^1 \times C.$ We consider a $Spin^c$-structure $\tilde{P_N}$, whose determinant line bundle $\mathcal{L}$ has a degree $\pm (2-2g)$ on every component of $C$. If the solution exists on the $Spin^c$-structure $\tilde{P_N}$, then $\tilde{P_N}$ is a pull-back $Spin^c$ structure from a $Spin^c$-structure on $C$ \cite[Proposition 5.1.]{km1}. 
\begin{prop}\cite[Corollary 5.3.]{km1}\label{3ma}
	For any sufficiently small closed real two-form $h$ on $N$, there is a unique solution to the perturbed Seiberg-Witten equations $(SW^3_h):$
\begin{center} $F_A=q(\psi) + ih$ \\ $D_A ( \psi) = 0.$ \end{center} This solution represents a smooth point of the moduli space in the sense that its Zariski tangent space is trivial. 
\end{prop}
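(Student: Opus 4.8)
The plan is to reduce the three-dimensional equations $SW^3_h$ on $N = S^1 \times C$ to an abelian vortex problem on the surface $C$, to solve that problem at the extremal degree, and then to show that the resulting solution is nondegenerate so that it persists and stays unique under any small perturbation. First I would use the product structure together with \cite[Proposition 5.1]{km1}: since the determinant line bundle $\mathcal{L}$ has the extremal degree $\pm(2g-2)$ on $C$, any solution is pulled back from $C$ and, after a gauge transformation, is invariant under translation in the $S^1$ factor. Concretely, the Weitzenböck formula $D_A^* D_A = \nabla_A^* \nabla_A + \tfrac{s}{4} + \tfrac12 \rho(F_A)$ combined with the curvature equation $F_A = q(\psi) + ih$ yields an a priori $C^0$ bound on $\psi$, and at the extremal degree this bound is saturated. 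Because $C$ has genus $g>1$ it carries a metric of constant negative scalar curvature $s<0$, and the saturation forces $\psi$ to have (almost) constant pointwise norm and no dependence on the circle coordinate, turning $SW^3_h$ into the vortex equations on $C$.

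Next I would analyze those vortex equations. Writing the two-dimensional spinor as $\psi = (\alpha, \beta)$, integrating the curvature equation over $C$ gives a degree constraint, and the standard dichotomy from the Kähler case shows that at the extremal degree exactly one component vanishes identically while the other is a $\bar\partial_A$-holomorphic section of a line bundle of degree $0$. Hence the surviving component is a nowhere-vanishing section of a holomorphically trivial bundle, its norm is pinned down by the (perturbed) curvature equation, and the connection is forced to be the canonical one. Modulo the gauge group this is a single configuration, which proves existence and uniqueness for $h = 0$; since $h$ enters only as a small shift of the norm constraint and the surviving section is nowhere zero, the same analysis rules out the appearance of the other component for all sufficiently small $h$.

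For the triviality of the Zariski tangent space I would compute the deformation complex of $SW^3_h$ at this solution. Using the $S^1$-invariance I would Fourier-decompose the linearized operator in the circle direction; the operator commutes with the circle action, so its kernel splits over Fourier modes, and the nonzero modes contribute nothing because there the extra first-order term $\partial_t$ (acting as $in \neq 0$) removes the kernel. This reduces the computation to the two-dimensional linearization on $C$, where the hypothesis $g>1$ (equivalently $s<0$) makes the relevant Weitzenböck term strictly positive on the spinor deformations, and the triviality of the underlying line bundle makes the remaining Dolbeault obstruction spaces vanish. Hence the linearized operator is invertible transverse to the gauge and circle directions and the Zariski tangent space is trivial.

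The hard part will be the last two steps carried out simultaneously: controlling the solution set for \emph{all} sufficiently small closed $h$ rather than for a single generic one. Nondegeneracy at $h=0$ gives local uniqueness through the implicit function theorem, but to exclude stray solutions one must combine the compactness of the moduli space with the strictness of the a priori norm bound at the extremal degree, showing that no solution can drift away from the canonical one as $h$ varies. The extremal degree is precisely the borderline at which the wrong behavior of the integrated curvature would produce a positive-dimensional family of solutions, so the crux is the a priori estimate that pins the spinor norm and fixes the sign of $\int_C F_A$ uniformly in small $h$.
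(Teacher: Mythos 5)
Your outline is essentially correct, but be clear about what the paper itself does here: it does \emph{not} prove this proposition. The statement is quoted verbatim from \cite[Corollary 5.3]{km1}, and your argument --- reduction to $C$ via \cite[Proposition 5.1]{km1}, the extremal-degree dichotomy forcing one spinor component to vanish and the other to be a nowhere-zero holomorphic section of a degree-zero (hence trivial) line bundle, the Kazdan--Warner-type uniqueness pinning down its norm and the connection, Fourier decomposition in the circle direction for the Zariski tangent space, and the implicit-function-theorem-plus-compactness step to pass from $h=0$ to all small closed $h$ --- is in substance a reconstruction of the proof given in that reference. So you have reproved the cited result rather than matched the paper, which simply cites it.

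The one piece of content that is original to the paper is the remark immediately following the proposition, and your write-up misses it: in this paper $C$ is allowed to be \emph{disconnected} (the $Spin^c$ structure has degree $\pm(2-2g)$ on every component), because the cylindrical ends of $\bar{M}$ have two components. The disconnected case is deduced from the connected one by restricting any solution to each component: the equations, the gauge group, and the perturbation all decouple over components, so existence, uniqueness, and triviality of the Zariski tangent space hold for $N$ exactly when they hold for each $S^1 \times C_\alpha$. Your proof implicitly assumes $C$ connected (a single genus-$g$ surface, a single degree-zero bundle), so you should add this component-wise argument to obtain the statement in the form the paper actually uses. One further point you flag as the crux and which deserves emphasis: for a closed two-form $h$ on $N$ not pulled back from $C$, solutions of $SW^3_h$ need not be $S^1$-invariant, so the vortex reduction by itself only treats $h=0$ (or invariant $h$); the nondegeneracy-at-zero plus compactness argument in your last paragraph is therefore not an optional refinement but the only route to uniqueness for arbitrary small closed perturbations.
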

 We remark that Proposition \ref{3ma} is originally proved only for a connected surface $C$ in \cite{km1}. However, the statement is also true when $C$ is disconnected since the solution restricted to each component satisfies Proposition \ref{3ma}.

 Next, we focus on the case in which a smooth and oriented Riemannian $4$ manifold $X$ is orientation-preserving isometric to $I \times N$ where $N$ is a closed oriented three manifold and $I$ is a (possibly infinite) open interval. Let $\tilde{P_N}$ be a $Spin^c$-structure on $N$. There is a natural $Spin^c$-structure $I \times \tilde{P_N}$ over $I \times N.$ With the $Spin^c$-structure, they introduced the perturbed monopole equations $\mathbf{SW_h}$ on $I \times N$ for $N=S^1 \times C$: \begin{center} $F_A=q(\psi) + ih$ \\ $D_A ( \psi) = 0.$ \end{center} where $\star$ is the complex-linear Hodge-star operator on $N$. Let $h$ be $\star n + dt \wedge n$ where $n$ is a harmonic one-form on $C$ and $dt$ denotes one-form on $I$-direction. 

  We fix a background $C^{\infty}$-connection $A_0$ over $N.$ Suppose that $f : C^{*} ( \tilde{P}_N) \longrightarrow \mathbb{R}$ is a functional introduced in \cite[Section 6.2.]{km1} by 
 $$f(A, \psi) = \int_{N} F_{A_0} \wedge a + \frac{1}{2} \int_{N} a \wedge \text{d}a + \int_N \langle \psi, D_A \psi \rangle \text{dvol}$$ where $a = A -A_0.$ Every $C^{\infty}$ solution on the Seiberg-Witten equation on $X$ corresponds to a path in the configuration space of $N$, $\gamma : I \to C ( \tilde{P_N}).$  Moreover, the Seiberg-Witten equation on $X$ can be viewed as a gradient flow equation of a map from $I$ to $\mathcal{C} (\tilde{P_N})$ in the following way. See \cite[Section 6]{km1} for details.
\begin{prop}\cite[Proposition 6.6.]{km1} Fix an open interval $I.$ If a configuration $(A(t), \psi(t))$ in a temporal gauge for the $Spin^c$-structure $I \times \tilde{P_N} \to I \times N$ satisfies the Seiberg-Witten equations, then it gives a $C^{\infty}$-path in $\mathcal{C}(\tilde{P_N}),$ $$\frac{\partial (A,\psi)}{\partial t} = \nabla f (A,\psi).$$ Two solutions to the Seiberg-Witten equations are gauge-equivalent if and only if the paths in $\mathcal{C} (\tilde{P_N})$ that they determine in temporal gauges are gauge-equivalent under the action of $\mathcal{G} (\tilde{P_N}).$ 
\end{prop}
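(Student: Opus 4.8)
The plan is to show that, on the cylinder $I\times N$, writing a solution in temporal gauge converts the four-dimensional Seiberg--Witten equations into the $L^2$ gradient flow of the functional $f$, and then to analyze the residual gauge symmetry that survives the temporal-gauge condition. Two identifications underlie the whole argument, and I would fix them at the outset: the restriction of $S^{+}(I\times\tilde{P_N})$ to each slice $\{t\}\times N$ is canonically the spin bundle $S(\tilde{P_N})$, and Clifford multiplication by $dt$ identifies self-dual two-forms on the cylinder with one-forms on $N$, sending $dt\wedge\alpha+\star\alpha$ to $\alpha$ for $\alpha\in\Omega^1(N)$. Everything then becomes a slice-by-slice computation on $N$.

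First I would translate the two equations into flow equations. Writing $A=A(t)$ for the path of connections obtained in temporal gauge (so that the $dt$-component of $A-A_0$ vanishes) and $\psi=\psi(t)$ for the corresponding path of spinors, one has $F_A=dt\wedge\dot a+F_{A(t)}$, where $\dot a=\partial_t(A-A_0)|_N$ and $F_{A(t)}$ is the curvature on the slice. Applying $\tfrac12(1+\star)$ and using the identification above gives $F^{+}_A\leftrightarrow\tfrac12(\dot a+\star F_{A(t)})$; matching this against $q(\psi)+ih$ and rearranging yields $\dot a=-\star F_{A(t)}+(\text{quadratic in }\psi)+(\text{perturbation})$, which is exactly the connection component of $\nabla f$. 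Indeed, the first variation of $\int_N F_{A_0}\wedge a+\tfrac12\int_N a\wedge da$ in the direction $\delta a$ is $\int_N F_A\wedge\delta a=\langle\star F_A,\delta a\rangle_{L^2}$, and the first variation of $\int_N\langle\psi,D_A\psi\rangle$ in the connection direction produces the quadratic spinorial term. For the Dirac equation, the four-dimensional operator in temporal gauge factors, via Clifford multiplication by $dt$, as $\partial_t+D_{A(t)}$ up to an invertible bundle map, so $D_A\psi=0$ is equivalent to $\dot\psi=-D_{A(t)}\psi$; this is the spinor component of $\nabla f$, since the first variation of $\int_N\langle\psi,D_A\psi\rangle$ in the $\psi$-direction is $D_A\psi$. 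Collecting the two components gives $\partial_t(A,\psi)=\nabla f(A,\psi)$ up to the sign conventions fixed in the definition of $f$ and the orientation. Smoothness of the resulting path into $\mathcal{C}(\tilde{P_N})$ follows from elliptic regularity for the gauge-fixed Seiberg--Witten operator together with the smoothness of any $C^{\infty}$ solution on the open manifold $I\times N$.

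For the equivalence statement the key point is that the temporal-gauge condition has a precise residual symmetry. Any $C^{\infty}$ solution can be put in temporal gauge: in the convention $g\cdot A=A-2g^{-1}dg$, killing the $dt$-component $a_t$ amounts to solving the ordinary differential equation $\partial_t g=\tfrac12\,a_t\,g$ in the $t$-variable, which is integrable along the interval $I$ once $g$ is prescribed on a single slice. A gauge transformation $g$ preserves the temporal gauge if and only if $\partial_t g=0$, that is, $g$ is the pullback of an element of $\mathcal{G}(\tilde{P_N})$. Hence, given two solutions already in temporal gauge, a four-dimensional gauge transformation carrying one to the other must preserve the temporal gauge and is therefore $t$-independent, so the two paths are equivalent under $\mathcal{G}(\tilde{P_N})$; conversely, any $t$-independent $g\in\mathcal{G}(\tilde{P_N})$, pulled back to $I\times N$, is a genuine four-dimensional gauge transformation relating the corresponding solutions. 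This establishes both directions.

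The main obstacle is bookkeeping rather than a conceptual hurdle: pinning down the exact signs and constants in the two identifications (spinors and self-dual forms) so that the right-hand side is literally $+\nabla f$ for the metric implicit in the definition of $f$, and checking that the quadratic term produced by varying $D_A$ in the connection direction matches $q(\psi)$ after transport to $N$. A secondary, purely technical point is functional-analytic: to speak of $\nabla f$ and of a smooth path one must fix the Sobolev completions of $\mathcal{C}(\tilde{P_N})$ on which $f$ is differentiable and the flow is defined, and verify that the temporal-gauge solution lies in them. This is handled exactly as in \cite[Section 6]{km1}, and the possible disconnectedness of $C$ plays no role, since the entire computation is local on $N$.
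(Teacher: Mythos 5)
Your proposal is correct, and it is essentially the argument the paper relies on: the paper gives no proof of this proposition itself, quoting it directly from Morgan--Szab\'o--Taubes and deferring to \cite[Section 6]{km1}, where precisely this computation (temporal-gauge decomposition of $F_A^{+}$ and $D_A$ into the gradient of the Chern--Simons--Dirac functional, plus the observation that gauge transformations preserving temporal gauge are exactly the $t$-independent ones) is carried out. Your treatment of the residual gauge symmetry via the ODE $\partial_t g=\tfrac12 a_t g$ and your remark that disconnectedness of $C$ is irrelevant are both consistent with how the paper uses the result.
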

  
 \begin{df}\label{finite} With the above settings, we call a $C^{\infty}$-solution whose associated flow line $\gamma : [0,\infty) \longrightarrow C^{*}(\tilde{P_N})$ satisfies $$\lim_{t \rightarrow \infty} {f(\gamma(t)) - f (\gamma(0))} < \infty$$ \textbf{a finite enery solution}.
\end{df}
 Remark that Definition \ref{finite} is an analogous definition of \cite[Definition 8.1]{km1}. Furthermore, the gradient flow equation of the cylinder exponentially decays when $N = S^1 \times C$, where $C$ is a connected, oriented surface with $g(C) >1.$

\begin{prop}\cite[Lemma 6.15.]{km1}\label{cyl}
	 With $N = S^1 \times C$ and $\tilde{P_N}$ a $Spin^c$-structure whose determinant line bundle $\mathcal{L}$ has degree $\pm (2-2g)$ on $C$, there are positive constants $\epsilon, \delta >0$ such that for any $T \ge 1$ if $(A(t), \psi(t))$ is a solution to the Seiberg-Witten equations on $[0,T] \times N$ in a temporal gauge and if for each t, $0 \le t \le T$, the equivalence class of $(A(t), \psi(t))$ is within $\epsilon$ in the $L^2_1$-topology on $B^{*} ( \tilde{P_N})$ of the solution $[A_0, \psi_0]$ of the Seiberg-Witten equations on $N$, then the distance from $[A(t), \psi(t)]$ to $[A_0, \psi_0]$ in the $L^2_1$-topology is at most $$d_0 \exp ( \delta t) + d_T \exp (-\delta (T-t)),$$ where $d_x$ is the $L^2_1$-distance from $[A(x), \psi(x)]$ to $[A_0, \psi_0]$ when $x = 0, T$.           
\end{prop}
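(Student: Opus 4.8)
The statement is the standard exponential-decay (or \emph{annulus}) estimate for a gradient flow near a nondegenerate critical point, and the plan is to prove it by combining the gradient-flow description of the cylindrical Seiberg--Witten equations with a spectral-gap estimate for the Hessian of $f$ at $[A_0,\psi_0]$. Both inputs are already available. The cited \cite[Proposition 6.6.]{km1} identifies a temporal-gauge solution on $[0,T]\times N$ with a flow line $\frac{\partial(A,\psi)}{\partial t}=\nabla f(A,\psi)$ in $\mathcal{C}(\tilde{P_N})$, so the distance $d(t)$ from $[A(t),\psi(t)]$ to $[A_0,\psi_0]$ is controlled by a single ODE-type quantity. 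Proposition \ref{3ma} asserts that $[A_0,\psi_0]$ is a nondegenerate critical point of $f$, i.e.\ its Zariski tangent space vanishes. Vanishing of the Zariski tangent space is precisely the invertibility of the Hessian of $f$ on a gauge slice; since $N$ is closed, this Hessian is a self-adjoint elliptic operator with discrete spectrum, so nondegeneracy produces a spectral gap $\lvert\mathrm{spec}(\mathcal{H})\rvert\ge\lambda>0$. This $\lambda$ is the source of the decay rate $\delta$.

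First I would localize the flow near the critical orbit. Choosing a Coulomb gauge slice through $(A_0,\psi_0)$, I parametrize configurations that are $L^2_1$-close to $[A_0,\psi_0]$ by a vector $u$ in the tangent space to the slice, so that $\|u(t)\|_{L^2_1}$ is comparable to the quotient distance $d(t)$ in $\mathcal{B}^{*}(\tilde{P_N})$; the smallness hypothesis $d(t)\le\epsilon$ guarantees that a good gauge representative exists and that the slice coordinates are valid on the whole interval. In these coordinates the gradient flow becomes $\dot u=\mathcal{H}u+Q(u)$, where $\mathcal{H}$ is the Hessian above and $Q$ collects the higher-order terms. Because the Seiberg--Witten equations are quadratic in the fields, $Q$ obeys a quadratic estimate $\|Q(u)\|_{L^2}\le C\|u\|_{L^2_1}^2$, so after possibly shrinking $\epsilon$ the nonlinear term is a small perturbation of the linear part.

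The heart of the proof is a convexity estimate for $g(t):=\|u(t)\|_{L^2}^2$. Differentiating twice, using self-adjointness of $\mathcal{H}$, the gap $\|\mathcal{H}u\|\ge\lambda\|u\|$, and the quadratic bound on $Q$, the leading term of $\ddot g$ is $4\|\mathcal{H}u\|^2\ge 4\lambda^2 g$, while all remaining terms carry an extra factor of $\|u\|_{L^2_1}$ and are therefore absorbable once $\epsilon$ is small. This yields
\begin{equation}
\ddot g(t)\ \ge\ 4\delta^2\,g(t),\qquad 0\le t\le T,
\end{equation}
for any fixed $\delta$ with $0<\delta<\lambda$. I expect this to be the main obstacle: it requires controlling the nonlinearity in the correct norms and, more delicately, passing between the $L^2$ and $L^2_1$ norms by elliptic estimates for $\mathcal{H}$ (the flow derivative $\dot u$ must be estimated in $L^2_1$), while keeping every constant uniform in $T$ and independent of the particular flow line. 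Careful bookkeeping of the gauge slice, so that $g$ genuinely measures the quotient distance with no spurious gauge directions, is also needed here.

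Finally I would convert the convexity inequality into the asserted bound by a maximum-principle comparison. Letting $h$ solve $\ddot h=4\delta^2 h$ with $h(0)=g(0)$ and $h(T)=g(T)$, the difference $g-h$ satisfies $(g-h)''\ge 4\delta^2(g-h)$ and vanishes at both endpoints, so the maximum principle for $\tfrac{d^2}{dt^2}-4\delta^2$ gives $g\le h$, that is
\begin{equation}
g(t)\ \le\ g(0)\,\frac{\sinh\!\big(2\delta(T-t)\big)}{\sinh(2\delta T)}\ +\ g(T)\,\frac{\sinh(2\delta t)}{\sinh(2\delta T)}.
\end{equation}
For $T\ge 1$ the two hyperbolic quotients are bounded by a fixed multiple of $e^{-2\delta t}$ and $e^{-2\delta(T-t)}$ respectively; taking square roots, using $\sqrt{a+b}\le\sqrt a+\sqrt b$, and writing $d_0=d(0)$, $d_T=d(T)$ then produces the two-sided exponential estimate of the proposition (the comparison yields directly the decaying normalization $d_0\exp(-\delta t)+d_T\exp(-\delta(T-t))$, and one relabels $\delta$ and absorbs the constant). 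The constants $\epsilon,\delta$ depend only on $N$ and $\tilde{P_N}$, as required, and the whole argument parallels the connected case of \cite[Lemma 6.15.]{km1}, the only change being that the spectral gap from Proposition \ref{3ma} holds componentwise when $C$ is disconnected.
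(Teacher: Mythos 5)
Your proposal is correct, and it is essentially the argument behind the cited result: the paper gives no proof of this proposition at all — it is imported wholesale from \cite[Lemma 6.15.]{km1} — and the proof in that reference is precisely the standard scheme you describe, namely slice coordinates near the nondegenerate critical point supplied by Proposition \ref{3ma}, the spectral gap of the self-adjoint elliptic Hessian, the convexity inequality $\ddot g \ge 4\delta^2 g$ with the quadratic nonlinearity absorbed for small $\epsilon$, and a maximum-principle comparison with the $\sinh$ solution. Two minor tightenings: the identity $\sinh(a-b)/\sinh a = \cosh b - \coth(a)\sinh b \le e^{-b}$ shows the hyperbolic quotients are bounded by $e^{-2\delta t}$ and $e^{-2\delta(T-t)}$ with constant exactly $1$, so no relabeling of $\delta$ is needed at the end; and your decaying bound $d_0\exp(-\delta t)+d_T\exp(-\delta(T-t))$ is the correct (and stronger) form of the estimate — the growing factor $\exp(\delta t)$ in the first term of the proposition as printed is evidently a typo.
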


\subsubsection{Definition of the moduli space.} 

 Now we are ready to define the moduli space for 4-manifolds with cylindrical ends. We fix a small perturbation of the monopole equations for $X$. Then, the equations are like the following
 $\mathbf{SW_{h_{X} + \mu^{+}_{X}}} :$ $$F^{+}_{A} = q(\psi) + i \phi_X( h ) + i\mu^{+}_{X}$$ $$D_A \psi = 0.$$
\begin{itemize}
\item $n$ harmonic one-form on $C$.
\item $h = \star n + dt \wedge n$.
\item $\mu^{+}_{X}$ is a compactly supported self-dual two form.
\item $\phi_X$ is a $C^{\infty}$ function which is identically $1$ on $[0, \infty) \times S^1 \times C$ and vanishes off of $[-1, \infty) \times S^1 \times C.$ 
\end{itemize}

	 We fix a $Spin^c$-structure $\tilde{P}$ on $X$. The restriction of $\tilde{P}$ to $N$ is denoted by $\tilde{P_N}.$ For each $C^{\infty}$ solution to the Seiberg-Witten equation with respect to $\tilde{P}$, there exists a temporal gauge for $\tilde{P}$ restricted to the cylindrical end. The restriction of the solution on the cylindrical end has an associated gradient flow line $\gamma$ from $[0, \infty)$ to $C^{*} ( \tilde{P}_N)$. We call a $C^{\infty}$-solution of $SW_{h_X + \mu^{+}_X}$ whose restriction on its cylindrical ends is a finite energy solution {\bf finite energy solution on cylindrical-end manifold $X$}. 
    
    Let $\tilde{\mathcal{M}} ( \tilde{P}, n, \mu^{+}_X)$ be the set of all finite energy solutions to the $SW_{h_X + \mu^{+}_X}.$ The topology on $\tilde{\mathcal{M}} ( \tilde{P}, n, \mu^{+}_X)$ is described in \cite[Section 8]{km1}. The space obtained from $\tilde{\mathcal{M}} ( \tilde{P}, n, \mu^{+}_X)$ quotient by the gauge transformation group which consists of $C^{\infty}$-change of gauge is denoted by the moduli space $\mathcal{M} ( \tilde{P}, n, \mu^{+}_X).$ The moduli space can be regarded as the zeroes of a map with Fredholm differential modulo the gauge transformation group.

\begin{df}
 For each solution $(A,\psi)$ of the Seiberg-Witten equation on $X$, we define $$ c(A, \psi) = - \frac{1}{4\pi^2} \int_X F_A \wedge F_A.$$ We call $c(A, \psi)$ the Chern integral of the solution. 
\end{df}

 The index of the Fredholm complex is equal to $$\frac{1}{4} [ c(A, \psi) - 2 \chi (X) - 3 \sigma (X)]$$ as with the of $SU(2)$  ASD connection in \cite{jm}.  The Chern integral defines a continuous map on $\mathcal{M} ( \tilde{P_X}, n, \mu^{+}_X.)$ Let $\mathcal{M}_{c} ( \tilde{P_X}, n, \mu^{+}_X)$ denote the pre-image of $c.$ From a choice of appropriate perturbation $\mu_X^{+}, n,$ the moduli space becomes a smooth manifold with the expected dimension. Moreover, the moduli space is compact from the same argument in \cite[Proposition 8.5.]{km1}.

\subsubsection{Exponential decaying property of the moduli spaces.}
 The other way to investigate the moduli spaces for the cylindrical $4$-manifolds is to show the analogous results in \cite[Section 8]{km1}. This is an analogous result of \cite[Corollary 8.6]{km1}. The statements is exactly same with \cite[Corollary 8.6]{km1}, except the fact that we allow that the cylindrical end may be disconnected. 
 \begin{prop}\label{unif}
 With the notations above, let $Y = S^1 \times C$ and $X$ be a complete Riemannian $4$-manifold with cylindrical-end isomorphic to $[-1, \infty ) \times Y$. Let $\tilde{P}$ be a $Spin^c$-structure whose restriction to $Y$ is isomorphic to the pull back from $C$ of a $Spin^c$ structure whose determinant line bundle has degree $\pm (2-2g)$. Then for any $c_0$ the following holds for any sufficiently small harmonic one form $n \ne 0 \in \Omega^1 ( C; \mathbb{R})$ and every $c \ge c_0$: There is a constant $T \ge 1$ such that if $(A, \psi)$ is a finite energy solution to the equations $SW_h$ with Chern integral $c$, then for every $t \ge T$ the restriction $(A(t), \psi(t))$ is within $\exp ( - z (t-T))$ in the $L^2_1$-topology of a solution to the equations $SW_{\star n}$ on $Y$, where the constant $z$ depends only on $Y$. The same result holds when the curvature equation is replaced by $$F^{+}_A = q(\psi) + ih + i \mu^{+}$$ for any sufficiently small, compactly supported, self-dual two-form $\mu^{+}$ on $X$. 
 \end{prop}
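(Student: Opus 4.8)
The plan is to reduce the statement to the connected case \cite[Corollary 8.6]{km1} by exploiting the product structure of the cylindrical end. Write $C = \bigsqcup_\alpha C_\alpha$ and $Y_\alpha = S^1 \times C_\alpha$, so that $[-1,\infty) \times Y$ is the disjoint union of the connected ends $[-1,\infty)\times Y_\alpha$. On each piece the equations $SW_h$ are exactly the cylinder equations over the connected surface $C_\alpha$, with $n$ replaced by its restriction $n_\alpha = n|_{C_\alpha}$; correspondingly the action functional splits as $f = \sum_\alpha f_\alpha$ and the gradient flow on $\mathcal{C}(\tilde{P}_Y)$ decouples into the product of the flows on the $\mathcal{C}(\tilde{P}_{Y_\alpha})$. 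Thus every statement about the flow on $Y$ is the conjunction of the corresponding statements on the finitely many $Y_\alpha$.

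First I would record the two component-wise inputs. By Proposition \ref{3ma} (valid on each $Y_\alpha$, as remarked after its statement), for sufficiently small $n$ there is a unique, nondegenerate solution $[A_0^\alpha, \psi_0^\alpha]$ of the three-dimensional equations on $Y_\alpha$; hence $[A_0,\psi_0] = \prod_\alpha [A_0^\alpha,\psi_0^\alpha]$ is the unique nondegenerate critical point of $f$ on $Y$ and is the solution of $SW_{\star n}$ appearing in the conclusion. By Proposition \ref{cyl}, applied to each connected $C_\alpha$, there are constants $\epsilon_\alpha, \delta_\alpha > 0$ governing the flow near $[A_0^\alpha,\psi_0^\alpha]$. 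I then set $\epsilon = \min_\alpha \epsilon_\alpha$ and $z = \min_\alpha \delta_\alpha$, both depending only on $Y$.

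Next I would run the convergence argument of \cite[Corollary 8.6]{km1} on each end-component. Given a finite energy solution $(A,\psi)$ with Chern integral $c$, its restriction to $[-1,\infty)\times Y_\alpha$ is in temporal gauge a flow line $\gamma_\alpha$ of $f_\alpha$ of finite energy. Since $\tfrac{d}{dt} f_\alpha(\gamma_\alpha) = |\nabla f_\alpha(\gamma_\alpha)|^2 \ge 0$, finiteness of the energy forces $\gamma_\alpha(t)$ into the critical set, and uniqueness of $[A_0^\alpha,\psi_0^\alpha]$ forces $\gamma_\alpha(t) \to [A_0^\alpha,\psi_0^\alpha]$ in $L^2_1$. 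Hence there is a time $T'$ after which the solution lies within $\epsilon$ of the critical point, and applying the estimate of Proposition \ref{cyl} on $[T',S]$ and letting $S \to \infty$ (so that the far term $d_S \exp(-\delta_\alpha(S-t))$ vanishes) gives the one-sided decay $\mathrm{dist}([A(t),\psi(t)],[A_0^\alpha,\psi_0^\alpha]) \le C \exp(-\delta_\alpha(t-T'))$. Taking the maximum over the finitely many $\alpha$ yields the bound $\exp(-z(t-T'))$ with $z$ depending only on $Y$. Because $\mu^+$ is compactly supported it vanishes on the end for large $t$, so the identical estimate holds for $F_A^+ = q(\psi)+ih+i\mu^+$.

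The hard part will be promoting the per-solution entry time $T'$ to a single $T \ge 1$ valid for every finite energy solution with Chern integral $c$, which is the genuinely uniform content of the Proposition. Here I would invoke the compactness of the moduli space $\mathcal{M}_c(\tilde{P}, n, \mu^+_X)$ obtained earlier by the argument of \cite[Proposition 8.5.]{km1}: on a compact family the time needed to enter the $\epsilon$-ball around the critical point is bounded, for otherwise a limit solution would either violate finiteness of the energy or linger near a critical point other than $[A_0,\psi_0]$, which is impossible by uniqueness. Disconnectedness introduces no new difficulty at this step precisely because the critical point stays unique --- it is the product of the component critical points --- so the flow cannot be trapped near a spurious configuration; this is exactly the feature used in the connected case, and it survives the decoupling. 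The constant $T$ then absorbs the dependence on $c$ (and on $c_0$, through the choice of sufficiently small $n$), while $z = \min_\alpha \delta_\alpha$ depends only on $Y$.
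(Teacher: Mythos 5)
Your proposal is correct and matches the paper's (implicit) approach: the paper offers no written proof of Proposition \ref{unif}, asserting only that it is the analogue of \cite[Corollary 8.6]{km1} with disconnected ends allowed, and the intended justification is exactly your component-wise reduction --- the same pattern the paper uses after Proposition \ref{3ma} --- combined with the compactness from \cite[Proposition 8.5]{km1} to get the uniform entry time $T$. Your writeup simply supplies the details the paper leaves unstated.
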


\subsubsection{Orientations of the moduli space.}   We review the way to orient the moduli space for $4$-manifolds with cylindrical end based
on \cite[Section 9.1]{km1}, \cite[Chapter 24.2]{km2}. Let $Z$ be a $4$-manifold with cylindrical end and let $T$ be a cylindrical neighborhood of infinity in $Z$. We define $H^2_{\ge} (Z,T; \mathbb{R})$ to be the maximal subspace of the second cohomology group $H^2_{\ge} (Z, T; \mathbb{R})$ whose intersection pairing is positive semi-definite. With the notations above, we have the following proposition.

\begin{prop}\cite[Corollary 9.2.]{km1} To orient the moduli space of finite energy solutions to Seiberg-Witten equations on a cylindrical-end manifold $Z$, it suffices to orient $H^1 (Z, T ; \mathbb{R}) \oplus H^2_{\ge} (Z, T; \mathbb{R})$.
\end{prop}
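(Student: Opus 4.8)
The plan is to show that the determinant line bundle of the linearization of the Seiberg-Witten equations, taken over the configuration space $\mathcal{B}^{*}(Z)$, admits a trivialization that is equivalent to the data of an orientation of $H^1(Z,T;\mathbb{R}) \oplus H^2_{\ge}(Z,T;\mathbb{R})$. Recall that the moduli space is cut out as the zero locus of a section whose linearization at a solution $(A,\psi)$ is the Fredholm operator
$$\mathcal{D}_{(A,\psi)} = -d_1^{*} \oplus d_2,$$
where $d_1$ is the infinitesimal gauge action and $d_2$ is the linearized monopole map, both taken in the exponentially weighted Sobolev spaces adapted to the cylindrical end $T$. A choice of orientation of the moduli space is exactly a trivialization of $\det \mathcal{D}$, so it suffices to trivialize this real line bundle over $\mathcal{B}^{*}(Z)$.

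First I would reduce to a single choice by homotopy invariance. Since $\det \mathcal{D}$ extends as a line bundle over all of $\mathcal{B}^{*}(Z)$ and this configuration space is connected, a trivialization is determined by its value at one configuration, so I may compute the determinant line at a convenient model point and deform the operator freely within the class of Fredholm operators preserving the cylindrical-end mapping properties. Concretely, I would homotope $\mathcal{D}_{(A,\psi)}$ by scaling to zero the coupling between the spinor and the form components, arriving at the split operator
$$\mathcal{D} \;\simeq\; D_A \,\oplus\, (d^{*} + d^{+}),$$
where $D_A : \Gamma(S^{+}) \to \Gamma(S^{-})$ is the Dirac operator and $d^{*}+d^{+} : \Omega^1(Z; i\mathbb{R}) \to \Omega^0(Z; i\mathbb{R}) \oplus \Omega^2_{+}(Z; i\mathbb{R})$ is the anti-self-dual deformation operator, both with the weighted boundary conditions on $T$. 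The essential analytic point is that this homotopy stays Fredholm: the induced path of three-dimensional limiting operators on $N = S^1 \times C$ must remain nondegenerate, which is guaranteed by the nondegeneracy of the unique three-dimensional solution from Proposition \ref{3ma} together with the exponential decay of Propositions \ref{cyl} and \ref{unif}.

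The operator $D_A$ is complex linear, so its kernel and cokernel are complex vector spaces and carry canonical orientations; hence $\det D_A$ is canonically trivial and the Dirac factor contributes nothing to the orientation data. It remains to trivialize $\det(d^{*}+d^{+})$, which depends only on the metric and whose determinant line is therefore a fixed real line. Using Hodge theory on the cylindrical-end manifold with exponential decay, I identify $\ker(d^{*}+d^{+})$ with the decaying harmonic $1$-forms, i.e. with $H^1(Z,T;\mathbb{R})$, and $\mathrm{coker}(d^{*}+d^{+})$ with the $H^0$ and self-dual harmonic contributions; since $Z$ is connected and the end $T$ is nonempty we have $H^0(Z,T;\mathbb{R}) = 0$, while the self-dual part is precisely the maximal positive semi-definite subspace $H^2_{\ge}(Z,T;\mathbb{R})$. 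Thus
$$\det(d^{*}+d^{+}) \;\cong\; \det H^1(Z,T;\mathbb{R}) \otimes \bigl(\det H^2_{\ge}(Z,T;\mathbb{R})\bigr)^{*},$$
so a choice of orientation of $H^1(Z,T;\mathbb{R}) \oplus H^2_{\ge}(Z,T;\mathbb{R})$ trivializes this line. Combining the canonical Dirac orientation with this choice trivializes $\det \mathcal{D}$ and orients the moduli space.

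The hard part will be the appearance of the \emph{semi-definite} subspace $H^2_{\ge}$ in place of the positive-definite $H^2_{+}$ of the closed case. This is forced by the boundary: the harmonic self-dual forms on the cylindrical end come from harmonic forms on $N = S^1 \times C$ lying in the null space of the intersection pairing on $H^2(Z,T;\mathbb{R})$, so the relevant cokernel is the positive \emph{semi}-definite part. Arranging the weighted Sobolev setup so that these neutral modes are counted correctly, and so that the Fredholm homotopy of the second step never crosses a wall where the three-dimensional limit degenerates, is the main technical obstacle; the rest follows the closed-manifold template of orienting through the determinant line together with the complex-linearity of the Dirac operator.
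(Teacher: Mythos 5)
The paper gives no proof of this proposition at all: it is quoted verbatim from \cite[Corollary 9.2]{km1}, so the only meaningful comparison is with the argument in that reference. Your overall architecture --- trivialize the determinant line of the linearization over the configuration space, decouple the spinor and form components by a homotopy, use complex-linearity of the Dirac operator to discard that factor, and identify $\det (d^{*}+d^{+})$ with $\det H^1(Z,T;\mathbb{R}) \otimes \bigl(\det H^2_{\ge}(Z,T;\mathbb{R})\bigr)^{*}$ --- is indeed the standard route taken there, so the skeleton is right.

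The genuine gap is your ``essential analytic point.'' You claim the decoupling homotopy stays Fredholm because the path of limiting operators on $N = S^1 \times C$ remains nondegenerate, ``guaranteed by the nondegeneracy of the unique three-dimensional solution from Proposition \ref{3ma}.'' That is not what Proposition \ref{3ma} gives. Nondegeneracy of the coupled solution $(A_0,\psi_0)$ makes the limiting operator invertible only at the \emph{coupled} end of your homotopy. At the \emph{split} end, the limiting operator on $N$ is the direct sum of a Dirac operator and the form operator on $\Omega^0(N)\oplus\Omega^1(N)$, whose kernel contains the constants and the harmonic one-forms of $N$; since $b_1(S^1\times C)=2g+1>0$ this kernel is nonzero, so the split operator is not Fredholm on unweighted spaces, and on a fixed $\delta$-weighted space Fredholmness is still lost at intermediate times whenever an eigenvalue of the limiting operator flows across the weight --- which cannot be avoided simply by shrinking $\delta$, since eigenvalues terminating at $0$ cross every sufficiently small positive weight. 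This degeneration is not a nuisance to be arranged away: it is exactly the source of the relative groups $H^{*}(Z,T)$ and of the \emph{semi}-definite subspace $H^2_{\ge}$ in the statement, and handling it (by an auxiliary perturbation of the limiting operator with finite-dimensional bookkeeping of the harmonic modes of $N$, or by excision against $D^2\times C$ to reduce to the closed case) is the actual content of \cite[Section 9.1]{km1}. Your final paragraph concedes the point, listing ``never crosses a wall where the three-dimensional limit degenerates'' as the main unresolved obstacle --- in direct tension with the earlier assertion that Proposition \ref{3ma} guarantees it. As written, the proposal asserts its key analytic step and then retracts it, so the proof is incomplete precisely where the stated cohomology groups are supposed to emerge.
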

\begin{rmk}\label{rmk1}We introduce another way to orient the closed manifold $M$ containing the cylinder $T = N \times \mathbb{R}$ as a submanifold, where $N$ is a compact $3$-manifold inside $M$. We can orient the moduli space for $M$ by orienting $H^1 (M, T; \mathbb{R}) \oplus H^2_{\ge} (M, T; \mathbb{R}).$ This is from the same logic in \cite[p.772]{km1}. We will use this remark to trace the orientations when gluing the moduli spaces.  
\end{rmk}

\subsection{Relation between two moduli spaces over $\bar{M}$ and $M$.}\label{sub25}

\subsubsection{Relative Seiberg invariant for manifolds with two cylindrical ends.}\label{sub24}

 Originally, the Seiberg-Witten invariant was described for closed manifolds, however it can be extended to $4$-manifolds with cylindrical ends which is denoted by the relative Seiberg-Witten invariants. On the assumption that the moduli space for such manifold is smooth and compact, the relative invariant is analogously defined as the original Seiberg-Witten invariant. We will follow Section 9.2 of \cite{km1}.
 \begin{df}\label{def1}
 	Let $M$ be an oriented, complete, Riemannian four manifold with cylindrical ends $T$ isometric to $[0, \infty) \times S^1 \times C$, where $C = C_1 \sqcup C_2$ and $C_1, C_2$ is an oriented, connected surface with $g(C_1) = g(C_2) > 1$. Let $\tilde{P_M}$ be a $Spin^c$-structures whose restriction on $S^1 \times C_i$ is isomorphic to the pullback of a $Spin^c$ structure on $C_i$ whose determinant line bundle is degree $(2g-2)$ for $i=1,2$. We defined the smooth and compact moduli space $\mathcal{M}_c (\tilde{P_M}, n, \mu^{+})$. This is the space of solutions of the Seiberg-Witten equation with a small perturbation $n, \mu^{+}$ whose Chern integral is equal to $c$. 
    Let $c_1$ be the first Chern class of the universal circle bundle over this moduli space. When the dimension of the moduli space is $2d$, we define the relative Seiberg-Witten invariant $SW_c (\tilde{P_X})$ by the pairing of ${c_1}^d$ and $ \mathcal{M}_c (\tilde{P_M}, n, \mu^{+})$. If the dimension is odd, then we define $SW_c (\tilde{P_X}) = 0.$ This relative Seiberg-Witten invariant is same for all $n$ and $\mu^{+}$.
 \end{df}
 
 We have a powerful tool to simplify the relative invariant. 
\begin{prop} 
	Let $M$ be a manifold and $\tilde{P_M}$ be a $Spin^c$-structure described in Definition \ref{def1}. Then for any c satisfying that the dimension of the moduli space $\mathcal{M}_c (\tilde{P_M}, n, \mu^{+})$ is equal to $2d >0$,  the value of the relative Seiberg-Witten invariant $SW_c (\tilde{P_M})$ is zero. 
\end{prop}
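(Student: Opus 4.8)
The plan is to show that the Chern class $c_1 \in H^2(\mathcal{M}_c(\tilde{P_M}, n, \mu^{+}); \mathbb{Z})$ of the universal circle bundle entering Definition \ref{def1} actually vanishes; then, since $2d>0$ forces $d\ge 1$, the pairing $SW_c(\tilde{P_M}) = \langle c_1^{d}, [\mathcal{M}_c(\tilde{P_M}, n, \mu^{+})]\rangle$ is automatically zero. The mechanism behind the vanishing is the rigidity of the limiting three-dimensional solution combined with the presence of a cylindrical end. By Proposition \ref{3ma} the perturbed Seiberg--Witten equations on $N = S^1 \times C_i$ have a \emph{unique} solution $[A_0, \psi_0]$; it is irreducible (its spinor is not identically zero, being pulled back from the extremal configuration on $C_i$), so its stabilizer in the gauge group is trivial. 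By the uniform exponential decay of Proposition \ref{unif}, every finite energy solution on $M$ converges, on each end and in a temporal gauge, to this one fixed configuration at an exponential rate.

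First I would use this convergence to build an asymptotic framing. With the gauge group $\mathcal{G}_0(\tilde{P_M})$ of transformations decaying to the identity at infinity, each class in the based moduli space $\mathcal{M}^0_c(\tilde{P_M}, n, \mu^{+})$ has a representative that, on the first end, decays to $g\cdot(A_0,\psi_0)$ with $g$ tending to a constant $\theta_1 \in S^1$. Since elements of $\mathcal{G}_0$ tend to $1$ on the ends they do not alter this constant, so $\theta_1$ descends to a continuous map $\theta_1 : \mathcal{M}^0_c(\tilde{P_M}, n, \mu^{+}) \longrightarrow S^1$. The residual circle, namely the constant gauge transformations realizing $\mathcal{M}^0_c \to \mathcal{M}_c$ as the principal $S^1$-bundle with Euler class $c_1$, acts on a solution $(A,\psi)$ by $\lambda\cdot(A,\psi)=(A,\lambda\psi)$ and hence sends $\theta_1$ to $\lambda\theta_1$; thus $\theta_1$ is $S^1$-equivariant. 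An $S^1$-equivariant map from the total space of a principal circle bundle to $S^1$ is precisely a trivialization: the assignment $[A,\psi]\mapsto (A,\psi)\cdot\theta_1(A,\psi)^{-1}$ is a well defined global section of $\mathcal{M}^0_c \to \mathcal{M}_c$. Consequently $c_1 = 0$ in $H^2(\mathcal{M}_c(\tilde{P_M}, n, \mu^{+}); \mathbb{Z})$, so $c_1^{d}=0$ for $d\ge 1$ and $SW_c(\tilde{P_M}) = 0$. I note that at $d=0$ no power of $c_1$ appears and the relative invariant is an honest signed count, so this argument does not collapse the dimension-zero invariants needed later.

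The main obstacle is the analytic construction of the framing map $\theta_1$: one must upgrade Proposition \ref{unif} to convergence that is \emph{uniform over the compact moduli space} and in a gauge depending continuously on the solution, so that the limiting constant $\theta_1$ is genuinely continuous and everywhere defined. This is exactly where irreducibility and the trivial stabilizer of $[A_0,\psi_0]$ are indispensable: triviality of the stabilizer guarantees that the exponential convergence selects a single constant rather than an entire orbit, so that $\theta_1$ is single-valued. The remaining points---that transformations decaying to $1$ fix $\theta_1$, that constant gauge transformations act by the stated scaling, and that the relevant circle is the one whose Euler class is $c_1$---are formal once the gauge-theoretic conventions of Subsection \ref{sub23} for cylindrical-end manifolds are fixed, and they are independent of the perturbation data $n,\mu^{+}$, consistently with the last assertion of Definition \ref{def1}.
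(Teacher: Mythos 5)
Your proposal is correct and follows essentially the same route as the paper, which simply invokes the proof of Proposition 9.4 of \cite{km1}: the key fact there is precisely the one you exploit, namely that every point of the moduli space is asymptotic at infinity to the same irreducible configuration on $S^1 \times C$, so the asymptotic framing trivializes the universal circle bundle and its Chern class vanishes, killing the invariant whenever $d \ge 1$. Your reconstruction of that argument (equivariant asymptotic phase $\theta_1$, section $[A,\psi] \mapsto (A,\psi)\cdot\theta_1(A,\psi)^{-1}$, hence $c_1 = 0$) is exactly the mechanism behind the cited proof, including the remark that it survives when $C$ is disconnected since one end already suffices.
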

 The proof is identical to the proof of Proposition 9.4. in \cite{km1}. Even $C$ is not connected, the fact that every point in the moduli space is asymptotic at infinity to the same irreducible configuration on $S^1 \times C$ is still true. Henceforth, it is enough to consider the case that the dimension is equal to $0$ when we deal with the relative Seiberg-Witten invariant of such manifolds

 Suppose that $N = S^1 \times C$ is smoothly embedded inside $M$. Let $g$ be the chosen metric on $M$ so that the metric $g$ is isometric to the product metric on the neighborhood of $N$, $[-1,1] \times N$ inside $M.$ $M \setminus N$ is a closed, oriented four manifold with two cylindrical ends $[-1, \infty) \times N_1, [-1, \infty) \times N_2$, where $N_1$ and  $N_2$ are homeomorphic to $N$. Let $\bar{M}$ denote $M \setminus N$. Moreover, the Riemannian metric on $\bar{M}$ is induced from $(M,g).$
 
 For all $s \ge 1 $, let $M_s$ be the closed Riemmanian four manifold obtained by gluing $$\left\{ s \right\} \times N_1 \longrightarrow \left\{ s \right\} \times N_2$$ $$(z,w) \longrightarrow (\bar{z}, w)$$ after truncating $\bar{M}$ at $\left\{ s \right\} \times N_1, \left\{ s \right\} \times N_2$. On the other hand, we could interpret a family of manifolds $M_s$ parameterized by $s \in [1, \infty)$ as riemmanian manifolds $(M, g_s)$ given a family of metrics $\left\{g_s \right\}$ on $M.$ Let $\nu$ be the product neighborhood $[-1, 1] \times N$ inside $M.$ For all $s$, $g_s$ is identical to the original metric $g$ on $M$ outside $\nu.$ 
 
 On $\nu,$ $g_s|_{\nu} = \lambda_s (t)^2 \text{dt}^2 + \text{d}\theta^2 + \text{d}\sigma^2,$ where $\text{dt}^2$ is the usual metric on $[-1,1]$, $\text{d}\theta^2$ is the usual metric on $[0,2 \pi]$ on $S^1$ and $\text{d}\sigma^2$ is the fixed (constant curvature) metric on $C.$ $\lambda_s : [-1, 1] \to \mathbb{R}$ is a even function satisfying that $\lambda_s (t) =1$ for $|t| > \frac{1}{2}$ and $\int_{-\frac{1}{2}}^{\frac{1}{2}} \lambda_s (t) \text{dt} = s.$ Conclusively, we can think Riemannian manifold $M_s$ of $(M, g_s).$ 
 
  Let $N_{-}, N_{+}$ be submanifolds $\left\{-\frac{1}{2} \right\} \times N, \left\{\frac{1}{2} \right\} \times N$ in $M_s$ and $T_s$ be the cylinder inside $M_s$ bounded by $N_{-}, N_{+}.$ We add one remark that any two-form $\omega$ on $M$ can be extended to the two-form on $M_s$ in a obvious way so that on the restriction on cylindrical part $T_s$ the two-form is nonzero and constant.

 For all positive $e$, let $\mathcal{M}_{e} (\tilde{P}_{\bar{M}}, n, \mu^{+}_{\bar{M}})$ be the moduli space of finite energy solutions to the perturbed equations with Chern interal $e$. By choosing sufficiently small and generic $n$ and $\mu^{+}_{\bar{M}}$ generically, we can arrange that this is a smooth and compact moduli space.

  Let $S$ be the set of isomorphism classes of $Spin^c$ structures $\tilde{P}$ on $M$ with the property that $\tilde{P}|_{\bar{M}} \cong \tilde{P}_{\bar{M}}.$ $S_e$ denotes a subset of $S$, which consists of $Spin^c$ structures whose determinant line bundle $\mathcal{L}$ satisfies $c_1 (\mathcal{L})^2 = e$. For $\tilde{P} \in S_e$ with $e \ge 0$, there is the corresponding $Spin^c$ structures $\tilde{P}_s$ over $M_s$. For any $s$ sufficiently large, we denote by $\mu^{+}_s$ the self-dual form on $M_s$, which is equal to $\mu^{+}_{\bar{M}}$. For large $s$, $supp (\mu^{+}_{\bar{M}}) \subset M_s.$
  
 We define $\mathcal{M} ( \tilde{P}_s, h_s, \mu^{+})$ to be the moduli space of solutions to the perturbed SW equations $SW_{h_s + \mu^{+}} :$ $$F^{+}_A = q(\psi) + i \phi_s ( \star n + dt \wedge n ) + i \mu^{+}.$$ $$D_A ( \psi) = 0.$$
 \begin{itemize}
 \item $\phi_s : M_s \longrightarrow [0,1],$ which is defined similarly as $\phi_X$ in $SW_{h_X + \mu^{+}_X}$.
 \item $h_s = \phi_s ( \star n + dt \wedge n).$
 \end{itemize}

\begin{thm} With the notations and assumptions above, suppose that $n$ is sufficiently small and generic and $s$ sufficiently large. There is a diffeomorphism 
\begin{equation}\label{bijection}
\mathcal{M}_e ( \tilde{P}_{\bar{M}}, n, \mu^{+}_{\bar{M}}) \xrightarrow{\Phi} \bigsqcup_{\tilde{P} \in S_e} \mathcal{M} ( \tilde{P_s}, n , \mu^{+}),
\end{equation}
determined by gluing the two boundary parts of the solution and deforming slightly so as to be in the solution moduli space. 
\end{thm}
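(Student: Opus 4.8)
The plan is to adapt the splice-and-stretch gluing argument of \cite[Theorem 9.1]{km1} to the self-gluing situation, where the two cylindrical ends of $\bar{M}$ are glued to \emph{one another} rather than to separate manifolds. I would construct $\Phi$ by splicing. Starting from a finite energy solution $(A,\psi)$ on $\bar{M}$, Proposition \ref{unif} guarantees that on each end the configuration converges exponentially, in the $L^2_1$-topology, to the unique irreducible solution $[A_0,\psi_0]$ on $N = S^1 \times C$ furnished by Proposition \ref{3ma}. Fixing cutoff functions supported near the truncation loci $\{s\}\times N_1$ and $\{s\}\times N_2$, I would interpolate each end to $[A_0,\psi_0]$ and then identify the truncated ends by the gluing map $(z,w)\mapsto(\bar z,w)$, producing a pre-glued pair $(A',\psi')$ on $M_s$. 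Because the limit is the \emph{same} irreducible configuration on both ends, the two asymptotic values match automatically, and $(A',\psi')$ fails to solve $SW_{h_s+\mu^+}$ only by an error that is exponentially small in $s$.

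The analytic heart of the argument is to correct $(A',\psi')$ to a genuine solution. For this I would show that the linearized Seiberg--Witten operator at $(A',\psi')$ on $M_s$ is surjective with a right inverse whose norm is bounded uniformly as $s\to\infty$. The two ingredients are (i) transversality on $\bar{M}$, namely that the linearization at $(A,\psi)$ is surjective because $\mathcal{M}_e(\tilde P_{\bar M},n,\mu^+_{\bar M})$ is cut out transversally for generic $n,\mu^+_{\bar M}$, and (ii) the nondegeneracy of the three-dimensional limit: since $[A_0,\psi_0]$ has trivial Zariski tangent space (Proposition \ref{3ma}), the linearized flow operator on the neck $T_s$ has no small eigenvalues, so no cokernel can accumulate on the lengthening cylinder. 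A standard eigenvalue-splicing estimate then assembles a uniformly bounded right inverse on $M_s$ out of the right inverse on $\bar{M}$ and the invertible neck operator. With this in hand, the contraction mapping principle produces a unique nearby exact solution, defining $\Phi$ on configurations; gauge-equivariance of the construction descends it to the moduli spaces.

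To see that $\Phi$ is a bijection I would build the inverse by the reverse, ungluing procedure. Given a solution on $M_s$ for $s$ large, Proposition \ref{cyl} shows that along the long neck $T_s$ it lies within $\epsilon$ of $[A_0,\psi_0]$, so cutting at the center of the neck and extending over the exposed cylindrical ends yields a finite energy solution on $\bar{M}$. Uniqueness in the contraction step forces the splicing and the cutting to be mutually inverse up to the exponentially small correction, so $\Phi$ is a homeomorphism, and the uniform bound on the right inverse makes $\Phi$ and $\Phi^{-1}$ depend smoothly on the moduli parameters, upgrading this to a diffeomorphism. Finally I would account for the indexing set: the topological datum lost in cutting, namely the clutching of the determinant line bundle $\mathcal{L}$ across the glued copy of $N$, ranges over a torsor modeled on the image of $H^1(N;\mathbb Z)$ in $H^2(M;\mathbb Z)$, which by a Mayer--Vietoris comparison is exactly the set $S$ of $Spin^c$ structures on $M$ restricting to $\tilde P_{\bar M}$; imposing $c_1(\mathcal{L})^2=e$, equivalently fixing the Chern integral, which is preserved under splicing up to terms vanishing as $s\to\infty$, cuts this down to $S_e$ and explains the disjoint union on the right-hand side.

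The step I expect to be the main obstacle is the uniform invertibility of the linearization on the stretched neck: one must rule out eigenvalues of the neck operator drifting to zero as $s\to\infty$, which is precisely where the irreducibility and nondegeneracy of the limiting three-dimensional solution are indispensable. The self-gluing feature, with both ends limiting to one and the same configuration, must be handled with care so that matching the two asymptotic framings introduces no additional cokernel and so that the relative framing is correctly recorded as the resulting $Spin^c$ structure on $M$.
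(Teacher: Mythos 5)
Your proposal follows essentially the same route as the paper: the paper's proof is precisely the standard splice-and-stretch gluing argument of Donaldson and Morgan--Szab\'o--Taubes, resting on the same three ingredients you identify --- the $S^1\times C$ moduli space being a single nondegenerate point, uniform exponential decay on the cylindrical ends (Proposition \ref{unif}) giving injectivity, and uniform decay along the center tube of $M_s$ giving surjectivity. One small correction: the unconditional statement that a solution on $M_s$ stays close to the static solution along the long neck, which is what you need to begin your ungluing step, is Proposition \ref{sur}, not Proposition \ref{cyl} --- the latter assumes that closeness as a hypothesis rather than concluding it.
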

 The proof follows the standard gluing arguments and limiting arguments. The original description for the arguments is from \cite[Chapter 7]{donaldson}. We will briefly explain three main ingredients to apply the standard gluing arguments. 
 
 First, the moduli space of solutions over $S^1 \times C$, $g(C) >1,$ consists of a smooth single point. In other words, all of the solutions in the configuration space of $S^1 \times C$ are gauge-equivalent. For an element $[A, \phi]$ in $\displaystyle \mathcal{M}_e ( \tilde{P}_{\bar{M}}, n, \mu^{+}_{\bar{M}})$, after restricting $[A, \phi]$ on $M_s$, we can glue the two boundary parts of $[A, \phi]$ and deform slightly so as to be an element in $\displaystyle \bigsqcup_{\tilde{P} \in S_e} \mathcal{M} ( \tilde{P_s}, n , \mu^{+}).$ This defines a map $\Phi.$ Whatever the resulting pair obtained from gluing two solutions is, we can find the solution in a neighborhood of the pair in the configurations space. Moreover, if we slightly deform the element in the configuration element, then the element does not change in the moduli space. These are from the same argument in \cite[Section 7.2.]{donaldson}. 
 
a Second, both $\mathcal{M}_e ( \tilde{P}_{\bar{M}}, n, \mu^{+}_{\bar{M}})$ and $\displaystyle \bigsqcup_{\tilde{P} \in S_e} \mathcal{M} ( \tilde{P_s}, n , \mu^{+})$ are smooth and compact as proved in Section \ref{sub23}. Moreover, on cylindrical ends, we have the uniform decay results from Proposition \ref{unif}. This implies the injectivity of $\Phi.$
 
 Lastly, we have the exponentially decaying result of $T_s$ over the manifold $M_s$, which is uniform with respect to $s:$
 
  \begin{prop}\cite[Corollary 7.5.]{km1}\label{sur} There is a constant $K>0$ depending only on $M$ and $\tilde{P}$ such that for any harmonic one-form $n \ne 0 \in \Omega^1 (C ; \mathbb{R})$ sufficiently small and for any $s \ge 1 $ and any solution $(A, \psi)$ to the perturbed Seiberg-Witten equations $SW_{h_s}$ on $M_s$, the restriction of $(A, \psi)$ satisfies the following. 
 For any $t \in [0,s]$, we have that the $L^2_1$ distance from $(A(t), \psi(t))$ to a static solution is at most $K \exp ( - \delta d (t))$ where $\delta$ is the constant in Proposition \ref{cyl} and $d(t) = \min (t,s-t)$.
  \end{prop}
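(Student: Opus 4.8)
The plan is to combine two ingredients: a uniform-in-$s$ energy bound on the neck $T_s$, and the local decay estimate of Proposition \ref{cyl}, which controls the distance to the static solution in the interior of a cylinder by two contributions decaying exponentially (at the rate $\delta$) inward from the two ends. First I would fix the product region and put the solution in temporal gauge there. On $T_s\cong[0,s]\times N$ the metric is a product and, since $\phi_s\equiv 1$ on the neck, the perturbation restricts to $\star n + dt\wedge n$; hence by the gradient-flow interpretation of the cylindrical equations recalled above, the restriction $(A(t),\psi(t))$ is a gradient flow line $\gamma\colon[0,s]\to \mathcal{C}(\tilde{P_N})$ of the functional $f$. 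The crucial structural input, from Proposition \ref{3ma}, is that the moduli space of solutions on $N=S^1\times C$ consists of the single nondegenerate point $[A_0,\psi_0]$, and this remains true for disconnected $C$.

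Second, I would bound the energy of $\gamma$ over $[0,s]$ by a constant independent of $s$. The energy equals the total change $|f(\gamma(s))-f(\gamma(0))|$ of the functional along the flow, which is in turn at most the full analytic energy of $(A,\psi)$ on the closed manifold $M_s$. By the standard a priori energy estimate for the monopole equations (via the Weitzenb\"ock formula) this total energy is controlled by the Chern integral together with $\chi(M_s),\sigma(M_s)$ and the fixed small perturbation $n,\mu^{+}$. Since none of these topological quantities changes as $s$ varies, the neck metric is a fixed product, and the perturbation is the same product form along the neck, the resulting bound $E$ is uniform in $s$.

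Third, and this is the \emph{heart of the matter}, I would show that there is a length $L$, independent of $s$, such that $\gamma(t)$ lies within the constant $\epsilon$ of Proposition \ref{cyl} of $[A_0,\psi_0]$ for all $t\in[L,s-L]$. I would argue by contradiction and compactness: were there no such $L$, one would find $s_i\to\infty$, solutions $(A_i,\psi_i)$, and times $t_i$ with $\min(t_i,s_i-t_i)\to\infty$ at which $\epsilon$-closeness fails. Recentering the neck at $t_i$ and using the uniform energy bound $E$ together with elliptic estimates and the absence of energy concentration for the monopole equations, a subsequence would converge in $C^{\infty}_{\mathrm{loc}}$, after gauge transformations, to a finite-energy gradient flow line on the full cylinder $\mathbb{R}\times N$. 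Such a line limits to critical points of $f$ at both ends; since $[A_0,\psi_0]$ is the unique critical point, one has $f(-\infty)=f(+\infty)$, so its total energy vanishes and the limit is the constant line at $[A_0,\psi_0]$, contradicting the failure of $\epsilon$-closeness at the recentering point. The main obstacle is exactly this step: guaranteeing that the closeness threshold and the length $L$ do not degenerate as $s\to\infty$. This is precisely what the $s$-independence of $E$ and of the spectral gap at $[A_0,\psi_0]$ (hence of $\delta$) are there to secure, and it is the uniform analogue of the single-end statement in Proposition \ref{unif}.

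Finally, with $\epsilon$-closeness on $[L,s-L]$ established, I would apply Proposition \ref{cyl} on that subinterval. Writing $d_L,d_{s-L}\le\epsilon$ for the endpoint distances, the interior distance at $t$ is bounded by a sum of two terms decaying exponentially inward from $t=L$ and from $t=s-L$, hence by $C\,e^{-\delta\min(t,\,s-t)}$ for a uniform constant $C$ after absorbing the fixed factor $e^{\delta L}$. On the end segments $[0,L]$ and $[s-L,s]$ the quantity $\min(t,s-t)$ is at most $L$, so $e^{-\delta\min(t,\,s-t)}\ge e^{-\delta L}$; since solutions are uniformly bounded in the $L^2_1$-topology there, the estimate holds on these segments as well once the constant is enlarged. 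Taking $K$ to be the largest of these constants yields the claimed bound for all $t\in[0,s]$.
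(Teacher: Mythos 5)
Your overall strategy --- a uniform-in-$s$ energy bound on the neck, a compactness/recentering argument to get $\epsilon$-closeness to the static solution on a middle portion $[L,s-L]$, and then the two-sided exponential estimate of Proposition \ref{cyl} with the endpoint segments absorbed into the constant --- is exactly the architecture of the proof of \cite[Corollary 7.5]{km1}, which the paper itself does not reprove but simply cites. Your first, second and fourth steps are sound as written.

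The gap is in the step you yourself flag as the heart of the matter. After recentering and extracting a $C^{\infty}_{\mathrm{loc}}$ limit you obtain a finite-energy gradient flow line on $\mathbb{R}\times N$, and you conclude it is constant because $[A_0,\psi_0]$ is the unique critical point, so that ``$f(-\infty)=f(+\infty)$ and the total energy vanishes.'' This inference fails as stated, because the (perturbed) Chern--Simons--Dirac functional $f$ does not descend to a real-valued functional on $\mathcal{B}^{*}(\tilde{P_N})$: under a gauge transformation $u$ it changes by a period proportional to $\langle c_1(\mathcal{L})\smile [u],[N]\rangle$, and since $c_1(\mathcal{L})|_N$ is pulled back from a class of degree $2g-2\neq 0$ on $C$, pairing against the $S^1$ direction gives nonzero periods; $f$ is genuinely circle-valued on the quotient. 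Hence the two asymptotic limits of your flow line are the same point of the moduli space but possibly different lifts, and the limiting energy equals a period that need not vanish. Uniqueness and nondegeneracy of the critical point therefore do not by themselves exclude nonconstant homoclinic trajectories realizing a nonzero period, and your uniform bound $E$ is not known to be smaller than the minimal period, so the contradiction does not yet arrive. What closes this hole in \cite{km1} is the further fact that every finite-energy solution on the full cylinder $\mathbb{R}\times S^1\times C$ for this $Spin^c$ structure and small $n\neq 0$ is gauge-equivalent to the static solution; this is proved there using the specific structure of $S^1\times C$ (equivalently, an index/Chern-integral argument ruling out flow lines that realize a nonzero period), and it is this ingredient --- not the mere uniqueness of the critical point in the quotient --- that makes the recentering argument, and with it your step three, go through.
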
 This property replaces \cite[Proposition (7.3.3)]{donaldson} which is an essential fact for the surjectivity. There is a formula between the relative invariant of $\bar{M}$ and the original invariant of $M$, followed by Proposition \ref{sur}.
  
\begin{thm}\label{thm211} With the notations defined in the above section, let $S_e$ be a set of $Spin^c$-structures $P$ on $M$ such that the restriction on $\bar{M}$ is isomorphic to $\tilde{P}_{\bar{M}}$ and its determinant line bundle $\mathcal{L}$ satisfies that $c_1 (\mathcal{L} )^2 = e$. By orienting $H^1 ( \bar{M}, T) \oplus H^2_{\ge} ( \bar{M}, T)$, we fix the sign for the relative Seiberg-Witten invariant of $\bar{M}$. Moreover, this determines the orientation $H^1 (M) \oplus H^2_{\ge} (M)$ which fix the sign of Seiberg-Witten invariant of $M$. With these fixed orientations, we have the following formula:
$$\sum_{\tilde{P} \in S_e} SW(\tilde{P}) = SW_e ( \tilde{P}_{\bar{M}}).$$
\end{thm}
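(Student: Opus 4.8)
The plan is to read the identity directly off the diffeomorphism $\Phi$ of \eqref{bijection}, after checking that $\Phi$ is compatible with the universal circle bundles and with the chosen orientations. First I would record what each side computes. By Definition \ref{def1}, writing $2d$ for the common dimension of the moduli spaces, the relative invariant is $SW_e(\tilde{P}_{\bar{M}}) = \langle c_1^{d}, [\mathcal{M}_e(\tilde{P}_{\bar{M}}, n, \mu^{+}_{\bar{M}})] \rangle$, where $c_1$ is the Chern class of the universal circle bundle over the relative moduli space. On the other side, each $M_s$ is the closed manifold $(M, g_s)$, so $\mathcal{M}(\tilde{P}_s, n, \mu^{+})$ is a perturbed Seiberg--Witten moduli space of $(M, g_s)$ and its pairing with $c_1^{d}$ is $SW(\tilde{P})$; for $b^{+}_2(M) > 1$ this is the diffeomorphism invariant, while for $b^{+}_2(M) = 1$ it is the invariant in the chamber selected by the stretched metric and the nonzero perturbation $n$, which is the $s$-independent value represented by the relative invariant. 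Note that the dimensions on the two sides agree, since attaching the neck $N \times I$ changes neither $\chi$ nor $\sigma$ and the Chern integral $e$ on $\bar{M}$ matches $c_1(\mathcal{L})^2 = e$ on $M$.

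Second, I would promote $\Phi$ to an isomorphism of universal circle bundles. Placing the base-point in the interior of $\bar{M}$, which sits unchanged inside every $M_s$, the based solution spaces glue exactly as the unbased ones do in the construction of $\Phi$; hence the universal bundle over the relative moduli space is the $\Phi$-pullback of the universal bundle over $\bigsqcup_{\tilde{P}} \mathcal{M}(\tilde{P}_s)$, so that $\Phi^{*} c_1 = c_1$. Since $\Phi$ is moreover a diffeomorphism onto the disjoint union, this yields
$$\langle c_1^{d}, [\mathcal{M}_e(\tilde{P}_{\bar{M}})] \rangle = \sum_{\tilde{P} \in S_e} \langle c_1^{d}, [\mathcal{M}(\tilde{P}_s)] \rangle,$$
which is the asserted formula up to the sign with which $\Phi$ matches the two orientations. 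By the vanishing of the relative invariant in positive dimension proved above, it would in fact suffice to treat $d = 0$, where both sides are signed point counts and $\Phi$ is a sign-preserving bijection; the $c_1^{d}$ argument avoids even this reduction.

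The main obstacle is the orientations, which is where Remark \ref{rmk1} enters. I would orient $\mathcal{M}_e(\tilde{P}_{\bar{M}})$ from a choice on $H^1(\bar{M}, T) \oplus H^2_{\ge}(\bar{M}, T)$ and orient each $\mathcal{M}(\tilde{P}_s)$ from the choice on $H^1(M) \oplus H^2_{\ge}(M)$ that this induces via Remark \ref{rmk1}, and then verify that the two orientations correspond under $\Phi$. This amounts to showing that the self-gluing identifying the two cylindrical ends is orientation-preserving on the determinant line of the linearized operator, which I would split into two checks: a cohomological comparison relating $H^1(\bar{M}, T) \oplus H^2_{\ge}(\bar{M}, T)$ to $H^1(M) \oplus H^2_{\ge}(M)$ through the long exact sequence of the pair and excision across the identification $N_1 \cong N_2$ (the self-gluing analog of the Mayer--Vietoris bookkeeping in \cite[Section 9]{km1}), and the multiplicativity of the determinant-line orientation under neck-stretching, which follows from the usual excision principle for the index in gluing theory. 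The one genuinely new point relative to the separating case of \cite{km1} is that the identification is by the orientation-reversing map $(z,w) \mapsto (\bar{z}, w)$ on the $S^1$-factor, so I would take care to confirm that this conjugation contributes no additional sign; once the conventions are arranged to absorb it, the formula follows with the stated signs.
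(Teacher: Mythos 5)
Your proposal is correct and takes essentially the same route as the paper: the paper likewise obtains Theorem \ref{thm211} by reading the identity off the neck-stretching diffeomorphism \eqref{bijection} (whose injectivity and surjectivity rest on Proposition \ref{unif} and Proposition \ref{sur}), with the signs fixed by the orientation conventions of Remark \ref{rmk1}. Your write-up actually supplies details the paper leaves implicit — the universal-circle-bundle comparison, the $b_2^{+}(M)=1$ chamber caveat, and the potential sign from the conjugation $(z,w)\mapsto(\bar{z},w)$ — so it is, if anything, a more complete version of the same argument.
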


\subsection{The Generalized Gluing Theorem for moduli spaces}\label{sub26}
	 In \cite[Theorem 9.1.]{km1}, the authors explained how to glue two configuration spaces for $4$-manifold with the specific cylindrical end. To summarize, the gluing theorem says the following: if we glue two one-cylindrical-end 4-manifolds, then the moduli space of the resulting manifold is represented by the product of the moduli spaces of the two original manifolds. We can naturally generalize the gluing theorem to the case that one of the two original manifolds have two cylindrical ends.
     
     Let $X$ denote a $4$-manifold with two cylindrical ends and let $Y$ denote a $4$-manifold with a cylindrical end. Y has a cylindrical end $T = [0,\infty) \times N$ where $N = S^1 \times C$ and $C$ is an oriented, connected surface with genus $g > 1 $. Let $C_1$ and $C_2$ be homeomorphic to $C$. $X$ has two cylindrical ends $T_i = [0, \infty) \times N_i$ where $N_i =  S^1 \times C_i ( i=1,2)$. We introduce two following notations: $T^s = [0,s] \times N$ and $T^s_i = [0,s] \times N_i.$ We fix $Spin^c$-structures $\tilde{P_X}$ and $\tilde{P_Y}$ whose determinant line bundles restricted to $N, N_1, N_2$ are all isomorphic to the pull back from $C, C_1, C_2$ of a line bundle of degree $(2g-2)$ on $C, C_1, C_2$ respectively. As we did in Section \ref{sub25}, we truncate $X, Y$ at $N_2 \times \left\{s \right\}$ and $N \times \left\{ s \right\}$. By gluing along $N_2 \times \left\{ s \right\} \subset X$ and $N \times \left\{ s \right\}  \subset Y$, we obtain a new cylindrical-end $4$-manifold $M_s$. Let $T'^s$ denote the cylinder $T^s_2 \cup T^s \subset M$. Let $S$ be a set of $Spin^c$-structures $\tilde{P}$ on $M_s$ such that $\tilde{P} |_X = \tilde{P_X}|_{X_s}$ and $\tilde{P}|_Y = \tilde{P_Y}|_{Y_s}$. We have the following diffeomorphism between moduli spaces:
\begin{equation}\label{gluing} \bigsqcup_{c_1 + c_2 = e} \mathcal{M}_{c_1} ( \tilde{P_X}, n, \mu_X^{+}) \times \mathcal{M}_{c_2} ( \tilde{P_Y}, n, \mu_Y^{+}) \xrightarrow{\cong} \bigsqcup_{\tilde{P} \in S} \mathcal{M}_c ( \tilde{P}, n, \mu^{+}).\end{equation} The same argument in \cite{km1} can be applied to show that the gluing map induces diffeomorphism. Specifically, the following three facts supporting the argument are true in this case.  First, the moduli space of $S^1 \times C$ is formed of a single smooth point. Second, the moduli spaces appeared in Equation \ref{gluing} are compact and smooth with the exponential uniform decay. Lastly, the solutions on the center tube $T'^s$ in $M_s$ decay uniformly in $s$. See the last paragraph in \cite[p.770]{km1} for more details.

\begin{thm}[Generalized Gluing Theorem]\label{thm2} We follow the notations $S, M_s, X, Y, N_1, N_2, N$ defined above. By orienting $H^1 ( X, T_1 \cup T_2; \mathbb{R}) \oplus H^2_{\ge} (X, T_1 \cup T_2; \mathbb{R})$ and $H^1 ( Y, T; \mathbb{R}) \oplus H^2_{\ge} (Y, T; \mathbb{R})$, we can orient the moduli spaces appeared on the left hand side. With these choices of the orientations, we can determine the orientation of the moduli spaces on the right hand side by orienting $H^1 ( M_s, T'^s; \mathbb{R}) \oplus H^2_{\ge} (M_s, T'^s; \mathbb{R})$ With the choices of orientations, we have the following product formula:
$$\sum_{\tilde{P} \in S} SW_c (\tilde{P}) = (-1)^{b^1 (X,T_1 \cup T_2) b^2_{\ge} (Y,T)} \sum_{c_1 + c_2 = c} SW_{c_1} (\tilde{P_X}) SW_{c_2} ( \tilde{P_Y}).$$
\end{thm}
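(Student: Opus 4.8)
The plan is to derive the formula directly from the diffeomorphism of moduli spaces recorded in Equation \ref{gluing}, reducing the theorem to the compatibility of the invariant's defining data — the universal circle bundle and the orientations — with the gluing map. Since $X$ now carries two cylindrical ends $T_1, T_2$ while the gluing to $Y$ is performed only along $T_2$, the glued manifold $M_s$ retains the single end over $T_1$; crucially, every analytic ingredient is unaffected by this extra end. The single-point moduli space over $S^1 \times C$, the compactness and smoothness from Subsection \ref{sub23}, the uniform exponential decay of Proposition \ref{unif}, and the uniform decay on the central tube $T'^s$ of Proposition \ref{sur} all hold verbatim, with $T_1$ merely carried along as a spectator end. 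I therefore take Equation \ref{gluing} as established and concentrate on how the relative invariants behave under it.

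First I would use the vanishing result following Definition \ref{def1}: because $X$ has two cylindrical ends, $SW_{c_1}(\tilde P_X)$ is zero unless the moduli space $\mathcal{M}_{c_1}(\tilde P_X, n, \mu^+_X)$ is zero-dimensional, in which case it is a signed count of points. For such $c_1$, the diffeomorphism of Equation \ref{gluing} exhibits $\mathcal{M}_c(\tilde P, n, \mu^+)$ over $M_s$ as a finite, signed union of copies of $\mathcal{M}_{c_2}(\tilde P_Y, n, \mu^+_Y)$, one copy per point of $\mathcal{M}_{c_1}(\tilde P_X)$. Following \cite[Section 9.2]{km1}, I would then verify that the universal circle bundle over $\mathcal{M}_c(\tilde P)$ restricts on each such copy to the universal circle bundle over the $Y$-factor — the zero-dimensionality of the $X$-factor ensuring that no binomial correction appears — so that evaluating the top power of its first Chern class returns $\int_{\mathcal{M}_{c_2}(\tilde P_Y)} c_1^{d_2} = SW_{c_2}(\tilde P_Y)$ on each copy. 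Summing over the points of $\mathcal{M}_{c_1}(\tilde P_X)$ and then over all splittings $c_1 + c_2 = c$ reproduces $\sum_{c_1+c_2=c} SW_{c_1}(\tilde P_X)\, SW_{c_2}(\tilde P_Y)$ up to an overall sign.

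The heart of the argument is fixing that overall sign, and this is the step I expect to be the main obstacle. By Remark \ref{rmk1} and \cite[Section 9.1]{km1}, the orientation of $\mathcal{M}_c(\tilde P)$ is pinned by an orientation of $H^1(M_s, T'^s; \mathbb{R}) \oplus H^2_{\ge}(M_s, T'^s; \mathbb{R})$, whereas the product orientation on the left of Equation \ref{gluing} is the one induced by orienting $H^1(X, T_1 \cup T_2)\oplus H^2_{\ge}(X, T_1 \cup T_2)$ and $H^1(Y,T)\oplus H^2_{\ge}(Y,T)$, factor by factor. A Mayer--Vietoris argument for the decomposition of $M_s$ along the central tube $T'^s$ provides an isomorphism
$$H^1(M_s, T'^s) \oplus H^2_{\ge}(M_s, T'^s) \;\cong\; \big(H^1(X,T_1\cup T_2)\oplus H^1(Y,T)\big) \oplus \big(H^2_{\ge}(X,T_1\cup T_2)\oplus H^2_{\ge}(Y,T)\big),$$
grouped by cohomological degree, which is precisely the splitting coming from the gluing of the linearized deformation (determinant-line) complexes. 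Passing from this degree ordering to the factorwise product ordering is a transposition of oriented subspaces, and its Koszul sign, computed exactly as in \cite[Section 9]{km1}, equals $(-1)^{b^1(X,T_1\cup T_2)\,b^2_{\ge}(Y,T)}$.

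Two points require genuine care and constitute the real difficulty. First, one must check that the non-glued end $T_1$ does not disturb this Mayer--Vietoris splitting: because $T_1$ is disjoint from both the neck $T'^s$ and from $Y$, its entire contribution sits inside the relative cohomology of $X$ (now taken relative to $T_1 \cup T_2$), and the connecting homomorphisms of the sequence restrict to those of the one-end situation of \cite[Theorem 9.1]{km1}. Second, one must confirm that the gluing of determinant lines furnished by index additivity is orientation-compatible with the above homological splitting, so that the product orientation is carried to the $M_s$ orientation up to exactly the transposition sign above. Both verifications follow the template of \cite[Section 9]{km1}; once they are in place, the signed counts multiply and the stated formula follows.
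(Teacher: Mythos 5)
Your overall strategy coincides with the paper's: the paper likewise treats the diffeomorphism of Equation \ref{gluing} as the analytic core (justified by the same three facts you list) and obtains the sign by citing Remark \ref{rmk1} and \cite[Section 9.1]{km1}, so in outline you are doing what the paper does, in more detail. However, there is one genuine gap in your derivation of the numerical identity. The left-hand side $SW_c(\tilde{P})$ is the integral of the top power of $c_1$ over the \emph{entire} moduli space of $M_s$ (which still carries the end $T_1$), and under Equation \ref{gluing} that moduli space in general contains components $\mathcal{M}_{c_1}(\tilde{P}_X) \times \mathcal{M}_{c_2}(\tilde{P}_Y)$ with $\dim \mathcal{M}_{c_1}(\tilde{P}_X) > 0$. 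You evaluate the integral only over components whose $X$-factor is zero-dimensional and never show that the remaining components contribute zero, so as written the identity is established only when no such components occur. The repair uses tools you already invoke: the total dimension $d_1 + d_2$ is determined by $c$ and the topology of $X$ and $Y$, independently of the splitting $c_1 + c_2 = c$. If it is positive, the vanishing result (\cite[Proposition 9.4]{km1}, which applies to the one-ended manifold $M_s$ just as the proposition following Definition \ref{def1} applies to two-ended manifolds) kills the left-hand side, while every term $SW_{c_1}(\tilde{P}_X)\, SW_{c_2}(\tilde{P}_Y)$ on the right has a positive-dimensional factor and vanishes as well; if it is zero, every nonempty component has $d_1 = d_2 = 0$ and your point-count argument is complete. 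Alternatively, expand the restricted Euler class $p_1^* e_1 + p_2^* e_2$ on every component and use $SW_{c_1}(\tilde{P}_X) = 0$ whenever $d_1 > 0$.

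A second, smaller caution concerns the sign. With the ordering you wrote, $H^1 \oplus H^2_{\ge}$, the passage from the degree-grouped to the factor-grouped ordering moves $H^2_{\ge}(X, T_1 \cup T_2; \mathbb{R})$ past $H^1(Y, T; \mathbb{R})$, whose Koszul sign is $(-1)^{b^2_{\ge}(X, T_1 \cup T_2)\, b^1(Y,T)}$ --- the transpose of the exponent in the statement. The stated sign $(-1)^{b^1(X, T_1 \cup T_2)\, b^2_{\ge}(Y,T)}$ emerges only under the determinant-line conventions of \cite{km1}, in which the $H^2_{\ge}$ factor enters dually and in the opposite order, so this step cannot be read off from the transposition as you set it up; it must follow the bookkeeping of \cite[Section 9.1]{km1}, which is what both you and the paper ultimately defer to.
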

 The orientation term $(-1)^{b^1 (X,T_1 \cup T_2) b^2_{\ge} (Y,T)}$  in Theorem \ref{thm2} comes from Remark \ref{rmk1} and \cite[Section 9.1.]{km1}.

 In \cite[Section 9.4]{km1}, the relative invariant of $D^2 \times C$ is computed, where $C$ is a connected and oriented genus $g>1$ surface. The invariant $SW_c ( \tilde{P})$ is zero, unless $c = 4-4g$. In the case of $c= 4-4g$, the relative invariant is equal to $1$ with the proper orientation which orients the moduli space positively. Thus, the following corollary comes from assigning one manifold to $D^2 \times C$ in \cite[Theorem 9.1.]{km1}. 
 
\begin{prop}\cite[Corollary 9.9]{km1} Let $X$ be an oriented Riemannian four-manifold with a cylindrical end isometric $[0,\infty) \times S^1 \times C$, where $C$ is a connected and oriented surface with genus $g>1$. Let $\hat{X}$ be the closed four manifold obtained by filling in $X$ with $D^2 \times C$. Then for $Spin^c$-structure $\tilde{P} \longrightarrow \hat{X}$ satisfying that the determinant line bundle $\mathcal{L}$ of $\tilde{P}$ has degree $(2g-2)$ on $\left\{ 0 \right\} \times C$, we have $$ SW(\tilde{P}) = SW_c ( \tilde{P}|_X)$$ where $$c + (4-4g) = \langle c_1 (\mathcal{L})^2, [\hat{X}] \rangle. $$
\end{prop}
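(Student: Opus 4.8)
The plan is to realize $\hat{X}$ as a gluing of $X$ and $D^2 \times C$ along their common boundary $S^1 \times C$, apply the gluing theorem for closed manifolds to this decomposition, and then substitute the known value of the relative invariant of the cap $D^2 \times C$. This is the one-cylindrical-end analogue of Theorem \ref{thm2}, namely \cite[Theorem 9.1]{km1}: gluing two manifolds each with a single end $[0,\infty) \times S^1 \times C$ produces a \emph{closed} manifold, here $\hat{X}$, whereas Theorem \ref{thm2} keeps one end open.

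First I would give $D^2 \times C$ a cylindrical end by identifying a collar of its boundary $\partial(D^2 \times C) = S^1 \times C$ isometrically with $[0,\infty) \times S^1 \times C$, carrying the $Spin^c$-structure $\tilde{P}|_{D^2 \times C}$, whose determinant line bundle has degree $2g-2$ on $\{0\} \times C$. Stretching the neck of $\hat{X}$ along $N = S^1 \times C$ then produces a family of metrics, all diffeomorphic to $\hat{X}$, degenerating to the two cylindrical-end pieces $X$ and $D^2 \times C$; this is exactly the input for \cite[Theorem 9.1]{km1}. Applying that product formula, for large neck length and a generic small perturbation, $SW(\tilde{P})$ equals, up to an orientation sign, a sum over Chern-integral splittings $c_1 + c_2 = \langle c_1(\mathcal{L})^2, [\hat{X}] \rangle$ of products $SW_{c_1}(\tilde{P}|_X)\, SW_{c_2}(\tilde{P}|_{D^2 \times C})$. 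The additivity $c_1 + c_2 = \langle c_1(\mathcal{L})^2, [\hat{X}] \rangle$ is simply Chern--Weil: the total Chern integral $-\tfrac{1}{4\pi^2}\int_{\hat{X}} F_A \wedge F_A$ equals $\langle c_1(\mathcal{L})^2, [\hat{X}] \rangle$ and splits into the contributions of the two pieces because the curvature decays along the neck by Proposition \ref{sur}.

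Next I would substitute the computation of \cite[Section 9.4]{km1}: the relative invariant $SW_{c_2}(\tilde{P}|_{D^2 \times C})$ vanishes unless $c_2 = 4 - 4g$, and equals $+1$ in that case, with the orientation making the single-point moduli space positively oriented. This collapses the sum to the single term $c_2 = 4 - 4g$, $c_1 = c := \langle c_1(\mathcal{L})^2, [\hat{X}] \rangle - (4-4g)$, which is precisely the asserted relation $c + (4-4g) = \langle c_1(\mathcal{L})^2, [\hat{X}] \rangle$, and leaves $SW(\tilde{P}) = SW_c(\tilde{P}|_X)$.

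The step needing the most care is the orientation sign, which I expect to be the main obstacle. I would check that the sign $(-1)^{b_1 \cdot b^2_{\ge}}$ of the product formula is trivial for this decomposition. This holds because $D^2 \times C$ contributes a vanishing Betti factor to the exponent: by Lefschetz duality $H^1(D^2 \times C, S^1 \times C; \mathbb{R}) \cong H_3(D^2 \times C; \mathbb{R}) = 0$, so $b_1(D^2 \times C, N) = 0$ and the product in the exponent vanishes. Combined with the relative invariant of the cap being $+1$, no sign survives. The remaining technical inputs are that the gluing theorem genuinely applies with one cap equal to $D^2 \times C$ — guaranteed since the monopole moduli space over $S^1 \times C$ is the single smooth point of Proposition \ref{3ma} and the uniform neck decay of Proposition \ref{sur} holds — so the gluing map is a diffeomorphism onto the relevant component of the moduli space of $\hat{X}$.
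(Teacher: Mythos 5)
Your proposal is correct and takes essentially the same approach as the paper: the paper obtains this proposition precisely by applying the product formula of \cite[Theorem 9.1]{km1} with one of the two glued pieces taken to be $D^2 \times C$, then substituting the computation from \cite[Section 9.4]{km1} that the cap's relative invariant vanishes unless $c = 4-4g$ and equals $1$ there with the positive orientation. Your extra check that the sign is trivial, via $b_1(D^2 \times C, S^1 \times C) = 0$, is a detail the paper leaves implicit but is consistent with its statement.
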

 
 Schematically speaking, if there is an $D^2 \times C$ embedded inside the four-manifold, then the relative invariant of the manifold obtained by removing $D^2 \times C$ is equivalent to the invariant of the original manifold. Thereafter we want to prove that even if we remove another $D^2 \times C$ inside the resulting manifold, the relative invariant still remains unchanged. The following corollary comes after the Generalized Gluing Theorem \ref{thm2}. 

\begin{prop}\label{prop9} Let $X$ be a compact and oriented $4$ manifold with two cylindrical ends $T_i = [0, \infty) \times N_i$, where $N_i = S^1 \times C_i$ and $i=1,2$. Let $g(C_1) = g(C_2) = g >1$. We can fill $\left\{\infty\right\} \times N_i$ by gluing $D^2 \times C_i$. Then let $\hat{X}$ be the manifold obtained $X$ by filling two cylindrical ends with $D^2 \times C_i$ for $i=1,2$. Then, for $Spin^c$-structures $\tilde{P} \longrightarrow \hat{X}$ whose determinant line bundle $\mathcal{L}$ restricted on $\left\{0\right\} \times C_i$ is a pull-back from the degree $(2g-2)$ line bundle on $C$, we have the following formula: $$SW( \tilde{P}) = SW_c ( \tilde{P}|_{X}),$$ where $$c + 8 - 8g = \langle c_1(\mathcal{L})^2, [\hat{X}] \rangle.$$ 
	
\end{prop}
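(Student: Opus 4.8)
The plan is to deduce Proposition~\ref{prop9} from its one-ended counterpart \cite[Corollary~9.9]{km1} by capping off the two cylindrical ends of $X$ one at a time. Capping the first end $T_2$ is a gluing operation to which the Generalized Gluing Theorem~\ref{thm2} applies directly, with the cap $D^2\times C_2$ playing the role of the one-cylindrical-end manifold $Y$; capping the remaining end $T_1$ is then exactly \cite[Corollary~9.9]{km1}. Since each capping shifts the Chern integral by $(4-4g)$, the two shifts will assemble into the claimed total shift $(8-8g)$, and the two gluing steps will be arranged to be sign-free by compatible orientation choices.

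Concretely, I would first regard $D^2\times C_2$ as a complete $4$-manifold $Y$ with the single cylindrical end $[0,\infty)\times S^1\times C_2$, and recall from \cite[Section~9.4]{km1} that its relative invariant satisfies $SW_{c_2}(\tilde{P}_Y)=1$ for $c_2=4-4g$ and $SW_{c_2}(\tilde{P}_Y)=0$ otherwise, for the orientation that orients the zero-dimensional $D^2\times C_2$ moduli space positively. Applying Theorem~\ref{thm2} to $X$, with ends $T_1,T_2$, and to this $Y$, glued along $N_2$, produces precisely the one-ended manifold $X':=X\cup_{N_2}(D^2\times C_2)$ with remaining end $T_1$, and the product formula reads
\[
\sum_{\tilde{P}\in S} SW_{c'}(\tilde{P}_{X'}) = (-1)^{\,b^1(X,T_1\cup T_2)\,b^2_{\ge}(D^2\times C_2,T)}\sum_{c_1+c_2=c'} SW_{c_1}(\tilde{P}|_X)\,SW_{c_2}(\tilde{P}_Y).
\]
As the only surviving term on the right has $c_2=4-4g$, this collapses to $SW_{c'}(\tilde{P}_{X'})=\pm\,SW_{c'-(4-4g)}(\tilde{P}|_X)$. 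I would then invoke \cite[Corollary~9.9]{km1} to cap the end $T_1$ with $D^2\times C_1$, obtaining $\hat{X}$ together with $SW(\tilde{P})=SW_{c'}(\tilde{P}_{X'})$ where $c'+(4-4g)=\langle c_1(\mathcal{L})^2,[\hat{X}]\rangle$. Writing $c=c'-(4-4g)$ and chaining the two identities gives $SW(\tilde{P})=\pm\,SW_c(\tilde{P}|_X)$ with
\[
c=\langle c_1(\mathcal{L})^2,[\hat{X}]\rangle-2(4-4g)=\langle c_1(\mathcal{L})^2,[\hat{X}]\rangle-(8-8g),
\]
which is the asserted relation $c+8-8g=\langle c_1(\mathcal{L})^2,[\hat{X}]\rangle$.

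The hard part will be confirming that the overall sign is $+1$. In the one-ended case \cite[Corollary~9.9]{km1} the sign disappears because the cap can be taken as the factor whose first Betti number enters the gluing sign, and $b^1(D^2\times C,T)=0$ by Lefschetz duality since $H_3(C;\mathbb{R})=0$. In the generalized theorem, however, the two-ended manifold $X$ is forced to be the distinguished factor, so the cap $D^2\times C_2$ enters instead through $b^2_{\ge}(D^2\times C_2,T)$; since the intersection form on $H^2(D^2\times C_2,\partial;\mathbb{R})\cong\mathbb{R}$ is identically zero---the core $\{0\}\times C_2$ has self-intersection $0$---one has $b^2_{\ge}(D^2\times C_2,T)=1$, and a potential factor $(-1)^{b^1(X,T_1\cup T_2)}$ appears. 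I would absorb this by tracking the orientations of $H^1$ and $H^2_{\ge}$ of the relevant pairs through Remark~\ref{rmk1} and \cite[Section~9.1]{km1}, choosing the orientation defining $SW_c(\tilde{P}|_X)$ so that the net sign is $+1$, exactly as the sign is fixed in Theorem~\ref{thm211}. A routine secondary point is the $Spin^c$-bookkeeping: $D^2\times C_2$ carries a unique $Spin^c$-structure with determinant of degree $(2g-2)$ on $C_2$, so the set $S$ reduces to the single structure restricting to $\tilde{P}$, and the left-hand sum is genuinely a single term, matching the sign-free form of \cite[Corollary~9.9]{km1}.
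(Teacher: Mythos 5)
Your proposal follows essentially the same route as the paper's own proof: cap one cylindrical end with $D^2\times C_i$ using the Generalized Gluing Theorem~\ref{thm2} (with the cap's relative invariant supported only at Chern integral $4-4g$), then cap the remaining end via \cite[Corollary~9.9]{km1}, chaining the two shifts of $4-4g$ into the stated $8-8g$. The paper's proof is terser --- it glues on $D^2\times C_1$ first and does not discuss signs at all --- whereas your orientation bookkeeping via Remark~\ref{rmk1} is a reasonable (and slightly more careful) treatment of a point the paper leaves implicit.
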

 
 \begin{proof} Let $X_i$ be the manifold obtained from $X$ by filling $\left\{ \infty \right\} \times N_i$ part. Let $P$ be a $Spin^c$-structure on $X$ whose determinant line bundle on $\left\{0 \right\} \times C_i$ is the pull-back of degree $(2g-2)$ line bundle on $C_i$. First, we remark that  there is a naturally extended $Spin^c$-structure $P'$ on $X_i$. We use Theorem \ref{thm2} for $X$ and $D^2 \times C_1$. Then $$SW_c (P') = SW_{c_1} (P)$$ with the property that $c_1 + 4-4g = c$. Then the statement is followed by Corollary 9.9 of \cite{km1}.
 
\end{proof}
 With Proposition \ref{prop9}, we verify the relationship between $\bar{M}$ and $\hat{M}$. We are ready to show the Main Theorem. 
\begin{proof}[Proof of Main Theorem.] 
From Proposition \ref{prop9} and Theorem \ref{thm211}, the main theorem follows naturally. 
\end{proof}

\subsection{The gluing formula along multiple boundaries whose type is $\mathbf{S^1 \times C}$.}\label{sub32}
 In this subsection, we assume that $X_1, X_2$ are compact, oriented, smooth 4-manifolds. Suppose that there are connected, oriented disjoint surfaces $\Sigma_1, \cdots, \Sigma_l \hookrightarrow X_1, X_2$ whose genus are at least 2. Suppose that the manifolds $X_1', X_2'$ are obtained from $X_1, X_2$ by removing the neighborhoods of surfaces $D^2 \times \Sigma_i$ for $i=1,2, \cdots l.$ Then, $\partial X_1' = \partial X_2' = S^1 \times \Sigma_1 \sqcup \cdots \sqcup S^1 \times \Sigma_l.$ We will glue the boundaries of $X_1', X_2'$ along the natural diffeomorphisms. Then we call the resulting manifold X. Moreover, we assume that $b_2^{+} (X_1), b_2^{+} (X_2), b_2^{+} (X) > 1$.
\begin{thm}\label{multiple_gluing}We start with the characteristic cohomology class $k \in H^2 (X, \mathbb{Z})$ satisfying that $s|_{S^1 \times \Sigma_i} = p^{*} k^i$ where $k^i \in H^2 (\Sigma_i, \mathbb{Z})$ satisfies $\langle k^i, [\Sigma_i] \rangle = 2g(\Sigma_i) - 2$ and $p:S^1 \times \Sigma_i \to \Sigma_i$ is a natural projection for $i=1,2, \cdots, l$. Let $k_{X_i'} \in H^2 (X_i', \mathbb{Z})$ be the restriction of $k$ on $X_i'$ for $i=1,2.$ Let $\mathcal{K} (k)$ be a set of all characteristic classes $s \in H^2 (X, \mathbb{Z})$ such that $s|_{X_1'} = k_{X_1'}, s|_{X_2'} = k_{X_2'}.$ Moreover, we define $\mathcal{K}_{X_i} (k)$ as a set of all characteristic classes $s \in H^2( X_i, \mathbb{Z})$ such that $s|_{X_i'} = k_{X_i'}$ for $i=1,2.$ With the appropriate choices of orientations,
\begin{equation}
(-1)^{\star}\sum_{s \in \mathcal{K}(k)} SW_{X} (s) = \sum SW_{X_1} (s_1) SW_{X_2} (s_2)
\end{equation}
where the right hand side sums over $(s_1, s_2) \in \mathcal{K}_{X_1} (k) \times \mathcal{K}_{X_2} (k)$ satisfying that $$s_1^2 + s_2^2 = s^2 - \sum_{i=1}^l (8g(\Sigma_i)-8).$$
\end{thm}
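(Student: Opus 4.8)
The plan is to treat the disjoint union $N = \bigsqcup_{i=1}^{l} S^1 \times \Sigma_i$ as a single (disconnected) separating hypersurface inside $X$ and to run the Morgan--Szab\'o--Taubes product-formula argument (Theorem \ref{productformula}) in exactly the disconnected-end framework already set up in Subsections \ref{sub23}--\ref{sub26}. Cutting $X$ along $N$ produces the two cylindrical-end manifolds $X_1'$ and $X_2'$, each carrying $l$ ends $[0,\infty)\times S^1\times\Sigma_i$, with the product metric near each $\Sigma_i$ as in Subsection \ref{sub25}. Every analytic ingredient needed holds componentwise over each $S^1\times\Sigma_i$ and hence over the disconnected neck: the moduli space of $S^1\times\Sigma_i$ is a single smooth point (Proposition \ref{3ma}), the relative moduli spaces are compact and smooth with the uniform decay of Proposition \ref{unif}, and the neck solutions decay uniformly in $s$ (Proposition \ref{sur}). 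Thus no new analysis is required beyond what these propositions already provide.

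First I would establish the gluing formula at the level of relative invariants for the separating decomposition $X = X_1' \cup_N X_2'$. Running the gluing-of-moduli-spaces argument of Theorem \ref{thm211} and the Generalized Gluing Theorem \ref{thm2} in the disconnected-neck setting --- where now the total space is closed and the two pieces lie in different components --- produces a diffeomorphism of the moduli space of $X$ onto the union, over matched $Spin^c$-structures and over Chern-integral splittings $c_1+c_2=e$, of products of the relative moduli spaces of $X_1'$ and $X_2'$. Passing to invariants gives
\[
\sum_{s \in \mathcal{K}(k)} SW_X(s) \;=\; (-1)^{\star} \sum_{c_1 + c_2 = e} SW_{c_1}\bigl(\tilde{P}_{X_1'}\bigr)\, SW_{c_2}\bigl(\tilde{P}_{X_2'}\bigr),
\]
where $e = s^2$ is the topologically determined Chern integral of the $Spin^c$-structures summed on the left, and the orientation term $(-1)^{\star}$ is read off from Remark \ref{rmk1} and \cite[Section~9.1]{km1} exactly as in Theorem \ref{productformula} and Theorem \ref{thm2}, namely a product of a first Betti number and a positive-semidefinite second Betti number of one piece relative to the neck.

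Next I would identify each relative factor with a closed invariant by generalizing the two-end filling statement Proposition \ref{prop9} from two ends to $l$ ends. This follows by induction on the number of ends: filling one $D^2\times\Sigma_j$ at a time and applying the Generalized Gluing Theorem \ref{thm2} together with the computation that the relative invariant of $D^2\times\Sigma_j$ is concentrated at Chern integral $4-4g(\Sigma_j)$ and equals $1$ there \cite[Corollary~9.9]{km1}. Each filling shifts the Chern integral by $4g(\Sigma_j)-4$, so after all $l$ fillings one obtains
\[
SW_{c_i}\bigl(\tilde{P}_{X_i'}\bigr) \;=\; SW_{X_i}(s_i), \qquad c_i \;=\; s_i^2 + \sum_{j=1}^{l}\bigl(4g(\Sigma_j)-4\bigr),
\]
for $i=1,2$, where $s_i$ is the first Chern class of the corresponding $Spin^c$-structure and the correspondence of $Spin^c$-structures matches the index sets $\mathcal{K}_{X_i}(k)$ with the summation over relative invariants. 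Substituting $c_1+c_2=e=s^2$ and using $2\sum_j(4g(\Sigma_j)-4)=\sum_i(8g(\Sigma_i)-8)$ yields the stated constraint
\[
s_1^2 + s_2^2 \;=\; s^2 - \sum_{i=1}^{l}\bigl(8g(\Sigma_i)-8\bigr),
\]
so that combining the three displays produces the asserted formula; the hypotheses $b_2^+(X_1),b_2^+(X_2),b_2^+(X)>1$ guarantee metric independence throughout.

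The main obstacle I anticipate is not analytic but combinatorial bookkeeping, since the analytic core is already disconnected-end ready. One must verify that the inductive filling does not interfere between distinct ends --- that filling $\Sigma_j$ leaves the relative moduli spaces at the remaining $l-1$ ends unchanged up to the single Chern-integral shift --- and that the orientation sign $\star$ is stable under the $l$-fold iteration, tracked consistently via Remark \ref{rmk1}. Equally delicate is matching the characteristic-class index sets: one must check that restriction $s \mapsto (s|_{X_1'}, s|_{X_2'})$ induces the required bijection between the $Spin^c$-structures summed on the two sides (with the square $s^2$ held fixed, as for $\mathcal{K}(k)$ in Theorem \ref{productformula}), using the hypothesis $k|_{S^1\times\Sigma_i}=p^{*}k^i$ with $\langle k^i,[\Sigma_i]\rangle = 2g(\Sigma_i)-2$ to pin down the degree on each neck component, which is precisely what forces the solution of $S^1\times\Sigma_i$ to be the unique irreducible point used in the gluing.
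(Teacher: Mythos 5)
Your proposal is correct in outline, but it takes a genuinely different route from the paper. The paper's proof is a short reduction: as long as more than one gluing neck remains, each $S^1\times\Sigma_i$ is non-separating in $X$, so one applies the self-gluing formula (Theorem \ref{thm21}) $l-1$ times --- each application cuts one neck and fills with $D^2\times\Sigma_i$, yielding the same configuration with one fewer surface --- and then applies the separating product formula \cite[Theorem 3.1]{km1} once to the last remaining neck; the index sets, the constraint $s_1^2+s_2^2=k^2-\sum_{i}(8g(\Sigma_i)-8)$, and the sign simply accumulate through the iteration. You instead cut along the whole disconnected hypersurface $\bigsqcup_{i} S^1\times\Sigma_i$ in one step and rerun the analytic gluing at the level of relative moduli spaces, then fill all ends inductively. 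That works, but it silently requires extensions of the paper's stated results: Theorem \ref{thm2} is stated only for gluing a two-end manifold to a one-end manifold along a single neck, Proposition \ref{prop9} fills only two ends, and the disconnected-end framework of Subsection \ref{sub23} (and Definition \ref{def1}) assumes all end components have equal genus, whereas your simultaneous cut produces neck components of different genera $g(\Sigma_i)$. These extensions are routine because every estimate in the framework is componentwise over the neck, as you note, but they must be stated and checked, so your route is longer than the paper's. What it buys in exchange: no intermediate closed manifolds appear (the paper's iteration passes through $l-1$ of them, each with its own $b_2^+$ hypothesis and orientation convention to track), the Chern-integral bookkeeping is done once, and the sign $\star$ is exhibited directly as a single product of relative Betti numbers rather than as an accumulation over $l$ successive applications.
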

\begin{proof} 
 It is easily proved by applying Theorem \ref{thm21} and \cite[Theorem 3.1.]{km1} repeatedly. We put the orientation term on the left hand side, which is different from the convention in the original paper \cite{km1} for the convenience in Section \ref{sub35}.
\end{proof}
\begin{rmk} As an analogue of \cite[Remark 3.2.]{km1}, two different elements in $\mathcal{K} (k), \mathcal{K}_{X_1} (k),$ $\mathcal{K}_{X_2} (k)$ differ by linear combinations of $[\Sigma_i]^*$, which is a cohomology class which is dual to the second homology class $[\Sigma_i]$, with integer coefficients.
\end{rmk}

\section{Application}
 Mccarthy and Wolfson defined an operation between two symplectic $4$-manifolds, which is called a {\it symplectic normal connect sum} \cite{mcc}. The symplectic normal connect sum is a construction of a new symplectic $4$ manifold from two symplectic manifolds $M_1, M_2.$ Let $\Sigma_1, \Sigma_2$ be symplectic submanifolds embedded in $M_1,M_2$ respectively. Suppose that $\Sigma_1$ has self-intersection number $n \ge 0$ and $\Sigma_2$ has self-intersection number $-n \le 0.$ Let $N_1 (\Sigma_i), N_2(\Sigma_i)$ be the tubular neighborhoods of $\Sigma_i$ inside $M_i$ for $i=1,2.$ Suppose that $N_1 (\Sigma_i)$ is contained in the interior of $N_2(\Sigma_i)$. Let $W_i \subset M_i$ be the complements of the interior of $N_1(\Sigma_i)$ inside $N_2(\Sigma_i)$. Let $f : \Sigma_1 \to \Sigma_2$ be a diffeomorphism. Then there exists an orientation preserving diffeomorphism $\bar{f} : W_1 \to W_2$ induced from $f$ such that $\bar{f}(\partial N_2(\Sigma_1)) = \partial N_1 (\Sigma_2)$. We glue $M_1, M_2$ along $\bar{f}$. The resulting manifold is defined to be the symplectic normal connect sum, denoted by $ M_1 \#_{\bar{f}} M_2.$ It is shown that $X$ is symplectic in \cite{mcc}.
 
 We will examine whether the converse is true in restricted cases. We consider two simple types of $4$-manifolds. Suppose that $M = S^1 \times Y$ for a compact, oriented and connected $3$-manifold and that $\Sigma_1 \subset Y \subset S^1 \times Y$ is an incompressible oriented surface with genus  $g \ge 2$. Let $\Sigma_2$ be a surface homeomorphic to $\Sigma_1$. Suppose that $X$ is a $\Sigma_2$-bundle over an oriented surface $B$ with positive genus. The self-intersection number of $\Sigma_1$ is zero since $\Sigma_1 \subset Y \subset S^1 \times Y$. The self-intersection number of $\Sigma_2$ is also zero since its tubular neighborhood is a product from the definition of the fiber bundle. Therefore, we can construct $X^Y$ which is a normal connect sum of $(M, \Sigma_1)$ and $(X, \Sigma_2)$.

\begin{thm}\label{mainapp} When $b_1 (Y) = 1,$
$X^Y$ has a symplectic form $\omega$ and its canonical structure $K_{\omega}$ such that $\langle K_{\omega}, [\Sigma] \rangle = 2 g(\Sigma) - 2 $ if and only if Y is a surface bundle over the circle.
\end{thm}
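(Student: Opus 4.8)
The plan is to prove the two implications separately, relating the symplectic condition on $X^Y$ to Seiberg--Witten data and thence to the fibredness of $Y$, in the spirit of the knot-surgery analysis of \cite{ni}. Write $\Sigma$ for the glued-up surface and $S^1 \times \Sigma \subset X^Y$ for the $3$-manifold along which the normal sum is performed; it separates $X^Y$ into $(S^1 \times Y)\setminus(D^2\times\Sigma_1)$ and $X\setminus(D^2\times\Sigma_2)$, so the relevant Seiberg--Witten gluing tool here is the product formula (Theorem \ref{productformula}), filling the two pieces back to $S^1\times Y$ and $X$.

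For the backward direction, suppose $Y$ fibres over $S^1$ with fibre $\Sigma_1$. Then $S^1 \times Y$ fibres over $T^2$ with fibre $\Sigma_1$, and since the fibre is homologically essential, Thurston's construction produces a symplectic form on $S^1 \times Y$ making $\Sigma_1$ a symplectic submanifold; the adjunction equality with $[\Sigma_1]^2 = 0$ gives $\langle K, [\Sigma_1]\rangle = 2g(\Sigma_1)-2$. The same construction applied to the $\Sigma_2$-bundle $X$ over the positive-genus base $B$ makes $\Sigma_2$ a symplectic fibre with $\langle K_X, [\Sigma_2]\rangle = 2g(\Sigma_2)-2$. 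Since $\Sigma_1$ and $\Sigma_2$ are symplectic surfaces of equal genus and opposite (here, vanishing) self-intersection, the symplectic normal connect sum of \cite{mcc} yields a symplectic form $\omega$ on $X^Y$, and tracing the canonical class through the sum gives $\langle K_\omega, [\Sigma]\rangle = 2g(\Sigma)-2$, as required. This direction is essentially the standard Thurston--Gompf machinery.

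For the forward direction, I would first verify, using the positive genus of the base $B$, that $b_2^+(X) > 1$ and hence $b_2^+(X^Y) > 1$; Taubes' theorem \cite{taubes} then gives that $\pm K_\omega$ is a basic class with $SW_{X^Y}(\pm K_\omega) = \pm 1$. The condition $\langle K_\omega, [\Sigma]\rangle = 2g-2$ together with $K_\omega|_{S^1\times\Sigma} = \rho^{*} k_0$ selects the admissible $Spin^c$-structures on the two pieces. Applying Theorem \ref{productformula} to the separating hypersurface $S^1 \times \Sigma$---with the filled pieces $S^1\times Y$ (for which $b_2^+ = 1$, so the small-perturbation chamber is used) and $X$---forces $SW_{S^1 \times Y}(l_1)\, SW_X(l_2) \neq 0$ for the fibre-direction classes, so that $SW_{S^1 \times Y}(l_1) = \pm 1$ for the $Spin^c$-structure extremal in the $[\Sigma_1]$-direction. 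By the Meng--Taubes theorem this identifies the extremal coefficient of the Milnor--Turaev torsion, equivalently the Alexander polynomial, of $Y$ as $\pm 1$ at the Thurston-norm level.

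The main obstacle is the final step: deducing that $Y$ actually fibres from this extremal Seiberg--Witten / Alexander-polynomial data. Because $b_1(Y) = 1$ forces $b_2^+(S^1 \times Y) = 1$, one must both control the chamber dependence of $SW_{S^1 \times Y}$ and confront the fact that, for $b_1 = 1$, monicity of the ordinary Alexander polynomial at the Thurston-norm degree does not by itself detect fibredness. Here I would invoke the resolution of the Taubes conjecture by Friedl--Vidussi---that the relevant (twisted) Alexander polynomials are monic of the correct degree, equivalently that $S^1 \times Y$ is symplectic, if and only if $Y$ fibres over $S^1$---to upgrade the Seiberg--Witten constraint to the desired fibration. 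Verifying that our normal-sum hypotheses genuinely place us within the scope of that criterion, in particular that $\Sigma_1$ is norm-minimizing and carries the fibred class, is the delicate point on which the whole argument turns.
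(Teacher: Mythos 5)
Your backward direction is fine and matches the paper: Thurston's construction on the fibred pieces plus the symplectic normal sum of \cite{mcc} and adjunction. The genuine gap is in the forward direction, at exactly the point you flag as ``delicate.'' Your argument extracts, via Theorem \ref{productformula} applied to the separating hypersurface $S^1\times\Sigma\subset X^Y$ and Meng--Taubes, only the statement that the \emph{ordinary} Alexander polynomial of $Y$ is monic of top degree $2g-2$ in the fibre class. For $b_1(Y)=1$ this does not detect fibredness (non-fibred $3$-manifolds with monic, even trivial, Alexander polynomial exist), and your proposed patch does not close the gap: the Friedl--Vidussi resolution of the Taubes conjecture has as hypothesis either that $S^1\times Y$ \emph{is} symplectic --- which does not follow from $X^Y$ being symplectic, since the normal sum destroys the product piece --- or that \emph{all} twisted Alexander polynomials $\Delta^{\alpha}_{Y,\phi}$ (over all finite quotients $\alpha$) are monic of the correct degree, which your gluing argument on $X^Y$ itself gives no access to. There is also a secondary problem you wave at but do not resolve: with $b_1(Y)=1$ one has $b_2^+(S^1\times Y)=1$, so the factor $SW_{S^1\times Y}$ in the product formula is chamber-dependent, and the Meng--Taubes formula in this case involves the infinite series $(1-t)^{-2}\Delta_Y$, so ``the extremal coefficient is $\pm 1$'' needs genuine care.

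The paper's proof is structured precisely to get around this obstruction, and it is where the main theorem of the paper (Theorem \ref{thm21}) enters. Assuming $Y$ non-fibred, Theorem \ref{thm39} of Friedl--Vidussi \cite{fv2} produces a finite normal cover $\tilde{Y}\to Y$ with $\pi_*(\Delta_{\tilde Y})=0$ (Proposition \ref{thm36}); since $\pi_*\colon H(\tilde Y)\to H(Y)$ is injective when $b_1(\tilde Y)=1$, in fact $\Delta_{\tilde Y}=0$. Lemma \ref{thm1} (a group-theoretic statement about extending a cover of the fibre to a cover of the surface bundle $X$) and Lemma \ref{covering} then assemble $r$ copies of $S^1\times\tilde Y$ and $l$ copies of $\tilde X$ into a finite cover $\tilde{X^Y}\to X^Y$ with $b_2^+(\tilde{X^Y})>1$, so that on the cover the Seiberg--Witten invariants are chamber-independent and Taubes' theorem (Theorem \ref{thm38}) applies to the pulled-back symplectic form, giving $SW(K_\Omega)=\pm 1$. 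The decomposition of $\tilde{X^Y}$ along the preimages of $S^1\times\Sigma$ involves \emph{non-separating} copies of $S^1\times\tilde\Sigma$ (the cover retracts onto a complete bipartite graph), so the product formula alone is insufficient; one needs the self-gluing formula, packaged as Theorem \ref{multiple_gluing}, to write a sum of invariants of $\tilde{X^Y}$ containing $SW(K_\Omega)$ as a product in which one factor is a sum of invariants of $S^1\times\tilde Y$. That factor vanishes by Meng--Taubes (Theorem \ref{thm37}) together with $\Delta_{\tilde Y}=0$, contradicting $SW(K_\Omega)=\pm 1$. In short: fibredness for $b_1=1$ can only be detected by Alexander polynomials of covers, and to see those in Seiberg--Witten theory you must pass to covers of the whole $4$-manifold, which forces the non-separating gluing problem your proposal avoids.
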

  We note that when $g(\Sigma_1) = g(\Sigma_2) = 1,$ the result is proved in \cite{ni}. In this section, we will prove Theorem \ref{mainapp}. 
 
 \subsection{Organization} For fibered three manifolds $Y$, Theorem \ref{mainapp} can be easily shown. We will focus on non-fibered manifolds. In subsection \ref{sub33}, we will construct the covering space of $X^Y$. We first construct covering spaces of $X$ and $M$ respectively and then glue multiple copies of those covering spaces of $X$ and $M$ in a specific way to get a covering space $\tilde{X^Y}$ over $X^Y$. In subsection \ref{sub34}, we introduce the main ingredients to compute the Seiberg-Witten invariant of $\tilde{X^Y}$. If we assume that $X^Y$ has a symplectic structure, then the constructed covering space $\tilde{X^Y}$ also has a symplectic structure. However, we will show that $\tilde{X^Y}$ cannot have a symplectic structure due to the obstruction from the Seiberg-Witten invariants in subsection \ref{sub35}. This completes the proof of Theorem \ref{mainapp}.

\subsection{The construction of $\tilde{X^Y}$.}\label{sub33}
 We will first prove that when the surface bundle over the surface is given, arbitrary covering space over the fiber can be extended to a covering space over the total space.
\begin{lem}\label{thm1}
 Let $\Sigma$ be a connected and orientable surface with genus $g$ which is more than 1 and $\tilde{\Sigma}$ be a connected, orientable surface with genus $ng - n + 1$ for a positive integer $n$. Suppose that a finite $n$-sheeted normal covering $\rho : \tilde{\Sigma} \longrightarrow \Sigma$ is given. Let $X$ be a $\Sigma$-bundle over $B$. Then, there exists a $\tsigma$-bundle $\tilde{X}$ over an oriented surface $\tilde{B}$ such that there exists a covering $\tilde{\rho}: \tilde{X} \longrightarrow X$ satisfying that the restriction of $\tilde{\rho}$ on the fiber $\tsigma$ is isomorphic to $\rho$. 
\end{lem}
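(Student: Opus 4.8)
The plan is to translate the lemma into a statement about the fundamental--group extension of the bundle and then realize the cover topologically. Since $g\ge 2$ the fiber and base are aspherical, so the bundle $X\to B$ yields a short exact sequence $1\to \pi_1\Sigma \to \pi_1 X \to \pi_1 B \to 1$, and the monodromy is an outer action $\mu:\pi_1 B \to \mathrm{Out}(\pi_1\Sigma)=\mathrm{MCG}(\Sigma)$. The normal covering $\rho$ corresponds to a normal subgroup $K\triangleleft \pi_1\Sigma$ of index $n$, with deck group $G=\pi_1\Sigma/K$ and $\pi_1\tsigma=K$. The target is a finite--index subgroup $H\le \pi_1 X$ with $H\cap\pi_1\Sigma=K$ that surjects onto a finite--index (hence surface) subgroup of $\pi_1 B$; the associated finite cover $\tilde X$, composed with the projection to $B$, will then be a $\tsigma$--bundle restricting to $\rho$ on fibers, by Ehresmann's theorem applied to the resulting proper submersion.

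The first real point is that $K$ need \emph{not} be preserved by $\mu$, so one cannot naively spread $\rho$ fiberwise over all of $B$. To fix this I would use that $\mathrm{MCG}(\Sigma)$ acts on the \emph{finite} set of index--$n$ normal subgroups of $\pi_1\Sigma$, so that $\Gamma:=\mathrm{Stab}_{\pi_1 B}(K)$ has finite index. Replacing $B$ by the finite cover $B'$ corresponding to $\Gamma$ and $X$ by the pullback bundle $X'=X\times_B B'$, the map $X'\to X$ is a finite cover that is the identity on each fiber, and over $B'$ the monodromy now fixes $K$. Consequently conjugation by every element of $\pi_1 X'$ preserves $K$ (by $\pi_1\Sigma$ because $K$ is normal there, by the rest because the monodromy fixes $K$), so $K\triangleleft \pi_1 X'$ and I may form $Q:=\pi_1 X'/K$, fitting into a finite--by--surface extension $1\to G\to Q\to \Gamma\to 1$.

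The problem now reduces to producing a finite--index subgroup $S\le Q$ with $S\cap G=1$; its preimage $H\le \pi_1 X'$ then satisfies $H\cap \pi_1\Sigma=K$ exactly. I would obtain $S$ by exhibiting $Q$ as virtually a surface group in a controlled way. Passing to the centralizer $Z=C_Q(G)$, which has finite index since $\mathrm{Aut}(G)$ is finite, gives a \emph{central} extension $1\to Z(G)\to Z\to \Gamma_1\to 1$ of a finite--index surface subgroup $\Gamma_1\le\Gamma$ by the finite abelian group $Z(G)=Z\cap G$. This central extension is classified by a class in $H^2(\Gamma_1;Z(G))\cong Z(G)$, and under a degree--$d$ cover the restriction map multiplies the class by $d$; choosing $d$ a multiple of the exponent of $Z(G)$ kills it over a finite--index surface subgroup $\Gamma_2\le\Gamma_1$. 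A splitting of the resulting trivial extension yields a surface subgroup $S\cong\Gamma_2\le Z\le Q$ of finite index with $S\cap G\le S\cap Z(G)=1$, as required.

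Finally, taking $H$ to be the preimage of $S$, the finite cover $\tilde X\to X'\to X$ is the desired map: the exact sequence $1\to K\to H\to \pi_1\tilde B\to 1$, with $\pi_1\tilde B$ the image of $S$ (a surface group), shows that $\tilde X\to\tilde B$ is a proper submersion with connected fibers $\tsigma$, hence a $\tsigma$--bundle over the closed oriented surface $\tilde B$, and its fiberwise restriction recovers $\rho$; the Euler--characteristic identity $\chi(\tsigma)=n\,\chi(\Sigma)$ then gives $g(\tsigma)=ng-n+1$. The hard part is exactly the non--invariance of $K$ under the monodromy: the two devices that overcome it, first trivializing the monodromy action on $K$ by the stabilizer cover of the base and then splitting the finite--by--surface extension up to finite index, are the crux, and constructing the finite--index complement $S$ (equivalently, proving $Q$ is virtually a surface group with $S\cap G=1$) is where the essential work lies.
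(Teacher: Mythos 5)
Your proposal is correct and takes essentially the same approach as the paper: your passage to the stabilizer cover makes $\pi_1 X'$ precisely the paper's normalizer $N$ of $K$ in $\pi_1 X$ (finite index via the conjugation action on the finite set of index-$n$ subgroups), and your construction of $S$ via the centralizer of $G$, the resulting central extension, killing its class in $H^2$ by a finite-degree surface cover, and splitting is exactly the paper's Lemma \ref{lem1}. The only difference is presentational, in that you pull back the bundle before running the group theory rather than after.
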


\begin{proof}
  We will fix a basepoint $x \in B \subset X$. Let $\Sigma$ be a fiber of $x \in X$. Henceforth, $\Gamma := \pi_1 (\Sigma, x)$. The surface bundle $X$ corresponds to a short exact sequence of fundamental groups \cite[page 51]{got}: $$ 1 \longrightarrow \Gamma = \pi_1(\Sigma,x) \longrightarrow \pi = \pi_1 (X,x) \xrightarrow{q} F = \pi_1(B,x) \longrightarrow 1.$$ Note that $q$ is a quotient map from $\pi_1 (X,x)$ to $\pi_1 (B,x)$. Let $A$ denote the normal subgroup $\rho_{*} (\pi_1 (\tsigma, \tilde{x}))$ of $\Gamma$ for a fixed $\tilde{x} \in \rho^{-1} (x).$ Let $N$ be the normalizer of $A$ in $\pi$ i.e. $N = \left\{ g \in \pi : g A g^{-1} = A \right\}.$ $A \unlhd N$ and $\Gamma \unlhd N$ since $\Gamma \unlhd \pi$.

  We will first show that $N/\Gamma$ is a surface group. If we show that $N/\Gamma$ is a finite index subgroup of $\pi/\Gamma,$ then it must be a surface subgroup. Let $\Lambda$ be the set of subgroups of $\Gamma$ whose index is equal to $|\Gamma / A|.$ Clearly, $A \in \Lambda$ and $\Lambda$ is a finite set since $\Gamma$ is finitely generated.

  $\pi$ has an action on $\Lambda$: for $g \in \pi$ and $H \in \Lambda$, $$g \cdot H = g H g^{-1} \in \Lambda.$$ This action naturally defines the map $\Phi : \pi \to Perm(\Lambda),$ where $Perm(\Lambda)$ is a permutation group of $\Lambda$ which is finite.  Obviously, $N = \left\{ g \in \pi | g \cdot A = A \right\}.$ $\pi / N$ is a finite set since $\ker \Phi \le N$ and $\pi/\ker \Phi = Perm(\Lambda)$ is finite. Therefore, $N/ \Gamma$ is a surface group since it is a finite index subgroup of the surface group $\pi /\Gamma.$
 
 There is a short exact sequence 
\begin{equation}\label{eqn1}
1 \longrightarrow \Gamma/A \hookrightarrow N/A \longrightarrow N / \Gamma \longrightarrow 1.
\end{equation}
 
\begin{lem}\label{lem1} For a finite group $G$ and a surface group $S$, suppose that there is a group extension $H$ satisfying $$1 \longrightarrow G \hookrightarrow H \longrightarrow S \longrightarrow 1.$$ Then, $H$ always contains a surface subgroup, which is also isomorphic to a proper subgroup of $S$.  
\end{lem}

\begin{proof} Let $C$ be the centralizer of $G$ in $H$. i.e. $C = \left\{ h \in H : hx = xh \text{ for all } x \in G \right\}.$ We show that $C$ is a finite index subgroup of $H$. 
\begin{itemize}
\item $H$ has an action on itself defined by conjugation: 
\begin{center}
$g \cdot x = gxg^{-1}$ for all $g,x \in H$.
\end{center}
\item Since $G \unlhd H$, $g \cdot G = G$. Therefore, the action gives the homomorphism $\phi$ from $H$ to $Aut(G)$, the automorphism group of $G$. Since $Aut(G)$ is a finite group, $\ker \phi$ becomes a finite-index subgroup of $H$. Moreover, $\ker \phi = C.$
\end{itemize} 
 We have the following short exact sequence:
\begin{equation}\label{eqn2} 1 \longrightarrow C \cap G \longrightarrow C \longrightarrow S' \longrightarrow 1, \end{equation} where $S' = C/C\cap G.$
 If we define a map $ \Phi : S' \longrightarrow S$ by the natural inclusion, then this map is well-defined. Moreover, $\Phi$ is injective. Therefore, $S' = C / C \cap G$ is isomorphic to a finite-index subgroup of $S = H/G$.

 It is easily seen that $C \cap G$ is contained in the center of $C$. Therefore, a short exact sequence \ref{eqn2} represents a central extension. This central extension corresponds to an element inside $H^2 (S', C\cap G)$. Note that if the corresponding element is zero, then the short exact sequence is splittable.  $H^2 (S', C\cap G)$ is finite and has only torsion elements since $C \cap G$ is finite.

 We examine an arbitrary finite-index subgroup $T$ of $S'$. Let $n$ be $[T:S'].$ Assume that $a \in H^2 (S', C \cap G)$ be the corresponding element to the short exact sequence. Then, $na \in H^2 (T, C \cap G)$ be the corresponding element of the short exact sequence 
\begin{equation}\label{eqn34}1 \longrightarrow C \cap G \longrightarrow q^{-1} (T) \longrightarrow T \longrightarrow 1,
\end{equation} where $q^{-1} (T)$ is a subgroup of $C.$

 Since $C \cap G$ is a finite group, there exists a positive integer $m$ such that $ma = 0.$ If we use this $m$ to pick the subgroup $T$, then the short exact sequence \ref{eqn34} becomes splittable. Hence, the surface group $T$ becomes a subgroup of $q^{-1}(T) \le C \le H$. Moreover, $T \le S' \le S$. Therefore, the statement is proved. 
\end{proof}
 
 We prove Lemma \ref{thm1} based on Lemma \ref{lem1}. We apply Lemma \ref{lem1} to the short exact sequence \ref{eqn1}. From the lemma, there exists a surface subgroup $H$ of $N/A$, which is also a subgroup of $\pi/\Gamma.$ Let $H$ be isomorphic to the fundamental group of $\Sigma_h$, which is an oriented surface with genus $h \ge 1$. With $H$, the following diagram $(\star)$ commutes.
\begin{center}
$(\star)$ \hspace{5mm}
\begin{tikzcd}
  1 \arrow[r] & A \arrow[r] \arrow[d, hookrightarrow]  &q^{-1} (H) \arrow[r, "q"] \arrow[d, hookrightarrow] &H \arrow[r] \arrow[d, hookrightarrow]&1  \\
  1 \arrow[r] &\Gamma \arrow[r] &\pi \arrow[r] &\pi/\Gamma \arrow[r] &1
\end{tikzcd}
\end{center}

  We will construct the covering $\tilde{\rho}.$ First, we consider a covering space $B'$ over $B$ corresponding to $H$, that is a subgroup of $\pi/\Gamma = \pi_1(B,x)$. Then, there is a pull-back $\Sigma$-bundle $X'$ over $B'$ and the covering map $X' \to X.$
\begin{center}
\begin{tikzcd}
  X' \arrow[r] \arrow[d] & X \arrow [d] \\
  B' \arrow[r] & B 
\end{tikzcd}
\end{center}
  
  Next, we will construct a covering space $\tilde{\rho} : \tilde{X} \to X'$ corresponding to $q^{-1} (H)$. Finally, this $(\tilde{X}, \tilde{\rho})$ will be a covering space over $X$. From the $(\star)$ diagram, the restriction of $\tilde{\rho}$ on the fiber is equal to $\rho.$  Therefore, Lemma \ref{thm1} is proved.
\end{proof}
  
  We have all of the ingredients necessary to construct the covering space of $X^Y$ by gluing several copies of $\tilde{X}$ and $S^1 \times \tilde{Y}$ together.

\begin{lem}\label{covering}
 There exists a finite cover $\tilde{X^Y}$ of $X^Y$ satisfying $b^{+}_2 (\tilde{X^Y}) >1.$
\end{lem}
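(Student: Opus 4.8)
The plan is to realise $\tilde{X^Y}$ as a union of covers of the two pieces of $X^Y$ that agree over the separating hypersurface $S^1\times\Sigma$, and then to force $b_2^+>1$ by a Mayer--Vietoris count. Write $X^Y = M'\cup_{S^1\times\Sigma}X'$, where $M'=(S^1\times Y)\setminus\nu(\Sigma_1)$ and $X'=X\setminus\nu(\Sigma_2)$ are the complements of the tubular neighbourhoods removed in the normal connect sum, and $\nu(\cdot)$ denotes a tubular neighbourhood.

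First I would fix compatible covers of the two sides. Choose a regular finite cover $p\colon\tilde Y\to Y$; since $\Sigma\subset Y$ is incompressible of genus $\ge 2$, $Y$ is Haken and such covers exist in abundance, and we are free to take the degree large. Regularity forces every component of $p^{-1}(\Sigma)$ to be one fixed normal cover $\rho\colon\tilde{\Sigma}\to\Sigma$, namely the one associated with the normal subgroup $\pi_1(\tilde Y)\cap\pi_1(\Sigma)\trianglelefteq\pi_1(\Sigma)$; write $p^{-1}(\Sigma)=\tilde{\Sigma}^{(1)}\sqcup\cdots\sqcup\tilde{\Sigma}^{(k)}$. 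Feeding this normal cover $\rho$ into Lemma \ref{thm1} produces a $\tilde{\Sigma}$-bundle $\tilde X\to X$ over a positive-genus base (a cover of $B$) whose restriction to a fibre is exactly $\rho$. After deleting a fibre neighbourhood the boundary of $\tilde X$ is $S^1\times\tilde{\Sigma}$, which matches each boundary component $S^1\times\tilde{\Sigma}^{(i)}$ produced on the $M'$-side by $S^1\times\tilde Y$.

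Next I would glue: remove from $S^1\times\tilde Y$ the neighbourhoods of the $k$ surfaces $S^1\times\tilde{\Sigma}^{(i)}$ and attach one copy of $\tilde X\setminus\nu$ along each resulting boundary, using the lifts of the gluing diffeomorphism defining $X^Y$. The sheet numbers agree by construction, so the result is a covering of $X^Y$; passing to a connected component if necessary yields a connected finite cover $\tilde{X^Y}\to X^Y$, which by design is again a normal connect sum of $S^1\times\tilde Y$ with copies of the surface bundle $\tilde X$. This is the point where the hypotheses are set up so that the later Seiberg--Witten computation can be run on $\tilde{X^Y}$ itself.

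The main obstacle is the lower bound on $b_2^+$. I would compute it by Mayer--Vietoris for $\tilde{X^Y}=(S^1\times\tilde Y\setminus\nu)\cup\bigl(\,\bigsqcup_i \tilde X\setminus\nu\,\bigr)$ with overlap $\bigsqcup_i S^1\times\tilde{\Sigma}^{(i)}$. On the $S^1\times\tilde Y$ side the intersection form is hyperbolic with $b_2^+(S^1\times\tilde Y)=b_1(\tilde Y)$, the positive classes having the shape $\alpha+ds\wedge\beta$ with $\langle\alpha\cup\beta,[\tilde Y]\rangle>0$; on the bundle side each block $\tilde X$ contributes its own positive-definite directions. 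The delicate point, which I expect to require the most care, is the \emph{persistence} of the positive cone: one must verify that removing the tubular neighbourhoods and gluing does not annihilate these positive classes in $H^2(\tilde{X^Y};\mathbb{R})$, the only possible loss being bounded by the rank of the restriction map to the fixed-topology gluing locus $\bigsqcup_i S^1\times\tilde{\Sigma}^{(i)}$. Since that restriction has bounded rank while either the number of blocks $k$ or $b_1(\tilde Y)$ can be driven as large as we like by increasing the degree of $p$ (for the non-fibered $Y$ that are the case of interest, finite cyclic covers already make $b_1(\tilde Y)$ unbounded), choosing the cover large enough forces $b_2^+(\tilde{X^Y})>1$.
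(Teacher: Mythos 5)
Your overall strategy --- cover each side of $X^Y = M'\cup_{S^1\times\Sigma}X'$ compatibly, feed the induced cover $\rho:\tilde{\Sigma}\to\Sigma$ into Lemma \ref{thm1}, and glue lifts of the original gluing map --- is the same as the paper's, but your assembly of the pieces does not close up, and this is a genuine gap. The cover $\tilde{X}\to X$ produced by Lemma \ref{thm1} comes from a \emph{proper} finite-index surface subgroup $H\le\pi_1(B)$ (Lemma \ref{lem1} only provides a proper subgroup), so the preimage of the fiber $\Sigma$ in $\tilde{X}$ has $r$ components, where $r$ is divisible by $[\pi_1(B):H]>1$. Hence the preimage of $X'=X\setminus\nu(\Sigma)$ in $\tilde{X}$ has $r>1$ boundary components, contrary to your sentence that "after deleting a fibre neighbourhood the boundary of $\tilde{X}$ is $S^1\times\tilde{\Sigma}$"; you cannot delete just one fibre neighbourhood, since the complement of a single fibre does not cover $X'$. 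Consequently, attaching one copy of $\tilde{X}\setminus\nu$ to each of the $k$ boundary components of $(S^1\times\tilde{Y})\setminus\nu$ leaves $k(r-1)$ boundary components unglued; equivalently, the covering degrees over the two sides of $X^Y$ disagree ($nk$ over the $M'$-side versus $knr$ over the $X'$-side), so no finite cover of $X^Y$ arises this way when $r>1$. The paper resolves exactly this mismatch by taking $r$ copies of $S^1\times\tilde{Y}$ and $l$ copies of $\tilde{X}$ (with $l>1$ forced by composing with the degree-$m$ cover $z\mapsto z^m$ on the $S^1$ factor) and gluing them in the pattern of the complete bipartite graph $K_{r,l}$, which balances the boundary counts and the degrees automatically.

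The second gap is the bound $b_2^+(\tilde{X^Y})>1$. Your "persistence of the positive cone" under Mayer--Vietoris is only asserted: a class of positive square on $S^1\times\tilde{Y}$ need not extend to $\tilde{X^Y}$, and bounding "the loss by the rank of the restriction map" is not an argument that any positive-definite subspace of rank $\ge 2$ survives. Worse, your fallback claim that finite cyclic covers of a non-fibered $Y$ with $b_1(Y)=1$ have unbounded $b_1$ is false in general: if $Y$ is non-fibered with trivial Alexander polynomial (e.g.\ zero-surgery on an untwisted Whitehead double), every finite cyclic cover still has $b_1=1$. The paper avoids both issues with a closed-manifold identity rather than Mayer--Vietoris: $b_2^+ = b_1 - 1 + \frac{1}{2}(\chi+\sigma)$, multiplicativity of $\chi$ and $\sigma$ under the $nlr$-fold cover, positivity of $\chi(X^Y)+\sigma(X^Y)$ (using $\chi(X^Y)=(2g-2)(2b)$ together with Kotschick's signature bound for surface bundles \cite{kot}), and the estimate $b_1(\tilde{X^Y})\ge b_1(K_{r,l})=(l-1)(r-1)$ coming from the retraction of the cover onto its bipartite dual graph --- note that this last input is available only because of the $K_{r,l}$ gluing pattern, so the two gaps are linked: without the bipartite assembly you also lose the source of first Betti number that makes the final inequality work.
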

\begin{proof}
 The following construction is an analogue of \cite{ni}. We start with a finite normal covering space $\tilde{Y}$ over $Y$ and a covering map $p : \tilde{Y} \to Y.$ Then, there is a covering spaces $p : S^1 \times \tilde{Y} \to S^1 \times Y$ defined by $$(z,y) \to (z^m, p(y))$$ for a fixed positive integer $m>1.$ Henceforth, $\tilde{M}, M$ denote $S^1 \times \tilde{Y}, S^1 \times Y$ respectively. Let $l$ be the number of components of $p^{-1} (\Sigma).$ $l> 1$ since $l \ge m.$ Since $p$ is normal, all the components of $p^{-1} (\Sigma)$ are homeomorphic. Let $\tilde{\Sigma}$ be the oriented surface homeomorphic to the component of $p^{-1} (\Sigma).$ Let $p|_{\tsigma} : \tilde{\Sigma} \longrightarrow \Sigma$ be a $n-$fold covering. We apply Lemma \ref{thm1} to $p|_{\tsigma}$. Then we get a covering space $\tilde{p} : \tilde{X} \to X$. Let $r$ be the number of components of $\tilde{p}^{-1} (\Sigma)$. Note that $r$ is divisible by $[\pi_1(B):H] >1$ with the notation in Lemma \ref{thm1}, hence $r>1.$ 
 
 We take $r$ copies of $S^1 \times \tilde{Y}$ and $l$ copies of $\tilde{X}$. For each copy of $S^1 \times \tilde{Y}$, there are $l$ copies of $\tilde{\Sigma}$. On the other hand, for each copy of $\tilde{X}$, there are $r$ copies of $\tilde{\Sigma}$. We correspond each $\tilde{\Sigma}$ in  one copy of $S^1 \times \tilde{Y}$ to $\tilde{\Sigma} \subset \tilde{p}^{-1} (\Sigma)$ for each copy of $\tilde{X}.$ For each pair of $\tilde{\Sigma}$ in $\tilde{M}$ and $\tilde{\Sigma}$ in $\tilde{X}$, we remove the neighborhoods of $\tilde{\Sigma}$  and glue along their boundaries $S^1 \times \tilde{\Sigma}$. The gluing maps are all given by a lifting of the gluing map between $S^1 \times Y$ and $X$. After gluing all, this forms a covering space $X^Y_{r,l}$ over $X^Y$ which retracts onto $K_{r,l}$, a complete bipartite graph. Remark that $X^Y_{r,l}$ is a $nlr$-fold covering of $X^Y$. We will show that $X^Y_{r,l}$ satisfies the condition. 
  
  We show that $b_2^{+} (X^K_{r,l}) >1.$ From the definitions of Euler characteristic and signature, $$\chi(\tilde{X^Y}) = (2b^{+}_2 (\tilde{X^Y}) - 2 b_1 (\tilde{X^Y}) + 2 )- \sigma(\tilde{X^Y}).$$ It is known that $\sigma(X) = (2-2g)(2-2b)$ where $g(\Sigma) = g$ and $g(B) = b$ and $\chi(S^1 \times \tilde{\Sigma}) = 0.$ Therefore, it is easily shown that $\chi(X^Y) = (2-2g)(-2b)$. From the Novikov additivity property and $\sigma(S^1 \times \tsigma) = 0$, $\sigma(X^Y) = \sigma(X)$. We cannot specify the exact value of $\sigma(X)$; however, it has a bound $$\frac{-(b-1)(g-1)}{2} \le \sigma (X) \le \frac{(b-1)(g-1)}{2},$$ which is proved in \cite{kot}.  Therefore,
\begin{align*}
b^{+}_2 (\tilde{X^Y}) &= b_1 (\tilde{X^Y}) - 1 + \frac{nlr}{2} ( \chi ( X^Y) + \sigma(X^Y)) \\ 
&= (l-1)(r-1) - 1 + \frac{nlr}{2}(\chi(X^Y) + \sigma(X^Y)) \\
&> (l-1)(r-1)-1 + \frac{nlr}{2} ( 2b(2g-2) - \frac{(b-1)(g-1)}{2}) > 1
\end{align*}
\end{proof}

\subsection{The main ingredients in the computation of Seiberg-Witten invariant of $\tilde{X^Y}$.}\label{sub34} In this subsection, we will state theorems which are necessary to compute the Seiberg-Witten invariant of $\tilde{X^Y}.$ On one hand, we will discuss that the Seiberg-Witten invariant of $4$-manifolds $S^1 \times Y$, for a compact, oriented and connected $3$-manifold $Y$ with $b_1 (Y) >0$, is related to the Alexander polynomial of the $3$-manifold $Y$ \cite{meng}. On the other hand, we will discuss the Seiberg-Witten invariant of the canonical $Spin^c$-structure of symplectic manifolds. Before the statement, we need two new definitions. 
 First, let $H(X) = H^2(X)/\text{Tors}$ be a non-torsion part of the second cohomology $H^2 (X)$ for a closed manifold $X$. Second, we have the Seiberg-Witten invariants $SW : H^2 (X) \to \mathbb{Z}$ for $4$-manifolds $X$. We have the natural quotient map $q:H^2 (X) \to H(X).$ Then we define $$\underline{SW}_X = \sum_{z \in H^2 (X)} SW_X (z) q(z) \in \mathbb{Z}[H(X)].$$ 
 
\begin{thm}\label{thm37} \cite{meng} Let $N$ be a compact, oriented and connected $3$-manifold with $b_1(N)>0$ such that the boundary of $N$ is empty or a disjoint union of tori. Let $p : S^1 \times N \to N$ be a natural projection and $p_{*} : H(N) \to H(S^1 \times N)$ be the induced homomorphism by $p$. Obviously, there exists a natural homomorphism $\Phi_2 : \mathbb{Z}[H(N)] \to \mathbb{Z}[H(S^1 \times N)]$ induced by $2p_{*}.$ When $b_1(N)=1$, $H(N) \cong H_1(N)/\text{Tors} \cong \mathbb{Z}$. Let $t$ be a generator of $H(N)$. Then, there exists an element $\xi \in \pm p_{*} (H(N))$ such that 
\begin{equation*}
\underline{SW}_{S^1 \times N} = \begin{cases*}
      \xi \Phi_2 (\Delta_N) & \text{if $b_1(N) >1$}  \\
      \xi \Phi_2 ((1-t)^{|\partial N|-2} \Delta_N) & \text{if $b_1(N) =1$}
    \end{cases*}
\end{equation*}
\end{thm}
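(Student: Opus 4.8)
The plan is to prove this through the chain of identifications underlying the Meng--Taubes relation: first reduce the four-dimensional Seiberg--Witten invariant of $S^1 \times N$ to a three-dimensional count on $N$, then identify that count with the Milnor--Turaev torsion of $N$, and finally invoke Milnor's classical computation of the torsion in terms of the Alexander polynomial. Schematically, the identity $\underline{SW}_{S^1 \times N} = \text{(three-dimensional SW)} = \text{(torsion)} = \text{(Alexander polynomial, up to correction)}$ is assembled in three stages, and both the doubling map $\Phi_2$ and the correction factor $(1-t)^{|\partial N| - 2}$ distinguishing $b_1(N) = 1$ from $b_1(N) > 1$ emerge from the last two stages.

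First I would carry out the dimensional reduction. Since the metric on $S^1 \times N$ is a product and the relevant $Spin^c$-structures are pulled back from $N$, every Seiberg--Witten solution can, after a gauge transformation, be put into $S^1$-invariant form, whereupon the four-dimensional equations become exactly the three-dimensional equations $SW^3_h$ on $N$. Consequently the four-dimensional moduli space is diffeomorphic to the three-dimensional monopole moduli space of $N$, and the signed count defining $SW_{S^1 \times N}$ on each $Spin^c$-structure equals the corresponding three-dimensional Seiberg--Witten invariant of $N$. The factor $2p_{*}$ defining $\Phi_2$ records how $Spin^c$-structures on $S^1 \times N$ restrict to $N$ (the determinant line doubles the underlying class), and the unit $\xi \in \pm p_{*}(H(N))$ absorbs the choice of a base $Spin^c$-structure against which classes in $H(N)$ are measured.

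The heart of the argument, and the step I expect to be the main obstacle, is identifying the three-dimensional Seiberg--Witten invariant of $N$ with the Milnor--Turaev torsion. Here one must analyze the three-dimensional monopole moduli space in the presence of $b_1(N) > 0$: the reducible locus, the perturbation by the closed two-form $h = \star n$, and the resulting wall-crossing. The strategy is to show that the function $s \mapsto SW^3(N,s)$ on $Spin^c$-structures obeys the same gluing and surgery relations as the Turaev torsion and agrees with it on a generating family (for instance under Dehn filling, or along a basis adapted to $H_1(N)$), so that the two coincide as elements of $\mathbb{Z}[H(N)]$. The case $b_1(N) = 1$ is genuinely different from $b_1(N) > 1$ because $b_2^{+}(S^1 \times N) = b_1(N)$; thus for $b_1(N) = 1$ the invariant is chamber-dependent, the small perturbation $n$ selects a chamber, and the wall-crossing term is precisely what produces the extra factors of $(1-t)$.

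Finally I would convert torsion into the Alexander polynomial via Milnor's theorem: the Reidemeister--Turaev torsion computed with respect to the abelianization equals $\Delta_N$ up to units when $b_1(N) > 1$, while for $b_1(N) = 1$ the torsion and $\Delta_N$ differ by a factor determined by the $H_0$ and $H_2$ contributions of the relevant covers, which for a manifold with $|\partial N|$ boundary tori is exactly $(1-t)^{|\partial N| - 2}$ (for a knot complement, $|\partial N| = 1$, recovering $\Delta_N/(t-1)$, and for a closed manifold, $|\partial N| = 0$, giving $\Delta_N/(t-1)^2$). Assembling the three stages and carrying $\Phi_2$ and $\xi$ through each identification yields the two displayed formulas. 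The delicate bookkeeping is to match the sign ambiguity and the torsion's inherent unit indeterminacy with the $Spin^c$-conjugation symmetry of the Seiberg--Witten invariant, which is what pins down $\xi$ up to the stated sign $\pm$.
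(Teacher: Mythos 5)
The paper never proves this statement: it is imported verbatim as a citation to Meng--Taubes \cite{meng} and used as a black box, so there is no internal argument to compare yours against. Judged on its own terms, your outline does reproduce the architecture of the cited proof: dimensional reduction of the equations on $S^1 \times N$ to the three-dimensional equations on $N$, identification of the resulting signed count with Milnor--Turaev torsion, and the classical torsion-versus-Alexander-polynomial computation, with $\Phi_2$ correctly attributed to the fact that $Spin^c$ structures form an affine space on which $c_1$ of the determinant line moves by twice the difference class, and $\xi$ absorbing the base-point and conjugation ambiguity.

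As a proof, however, there is a genuine gap exactly where you flag it: the second stage. Asserting that one should ``show $SW^3$ obeys the same gluing and surgery relations as the Turaev torsion and agrees with it on a generating family'' is a statement of the proof strategy, not a proof; establishing the surgery and wall-crossing formulas for the monopole count (including the treatment of reducibles when $b_1(N)=1$, where $b_2^{+}(S^1 \times N)=1$ and the invariant is chamber dependent) and the matching axiomatic characterization of torsion is the entire content of \cite{meng}, and nothing in your sketch substitutes for it. Two smaller inaccuracies: the factor $(1-t)^{|\partial N|-2}$ is not produced by wall-crossing --- it is the purely algebraic Milnor/Turaev discrepancy between torsion and the Alexander polynomial in the $b_1=1$ case (for closed $N$ the torsion is $\Delta_N/(t-1)^2$ up to units, for a knot exterior $\Delta_N/(t-1)$); the perturbation and chamber choice only explain why the infinite series obtained by expanding $(1-t)^{-2}\Delta_N$ is the meaningful answer. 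And your first-stage claim that every solution on $S^1 \times N$ can be gauged to an $S^1$-invariant one is standard but not automatic; it requires an argument (e.g.\ uniqueness and transversality of the three-dimensional solutions, so that rotating a solution by the $S^1$-action moves it only by gauge), which should be supplied rather than asserted.
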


If we apply Theorem \ref{thm37} to $\tilde{Y}$, then
\begin{equation}\label{underline_eq}
\underline{SW}_{S^1 \times \tilde{Y}} = \begin{cases*}
      \xi \Phi_2 (\Delta_{\tilde{Y}}) & \text{if $b_1(\tilde{Y}) >1$}  \\
      \xi \Phi_2 ((1-t)^{-2} \Delta_{\tilde{Y}}) & \text{if $b_1(\tilde{Y}) =1$}.
      \end{cases*}
\end{equation}

 To make the Seiberg-Witten invariant of $S^1 \times \tilde{Y}$ easy to compute, we will choose the covering $\tilde{Y}$ such that $\Delta_{\tilde{Y}}$ is almost trivial. The following three theorems make it easy to deal with the Alexander polynomials of covering spaces. 

\begin{prop}\label{prop38}\cite[Proposition 3.6]{fv} Let $N$ be a $3$-manifold and let $\alpha : \pi_1 (N) \to G$ be an epimorphism onto a finite group. Let $H_G$ be $H(N_G)$ and $H$ be $H(N)$. Let $\pi_{*} : H_G \to H$ be the induced map  Then the twisted Alexander polynomials of $N$ and the ordinary Alexander polynomial of $N_G$ satisfy
the following relations:\\

If $b_1(N_G)$ > 1, then 
\begin{equation*} 
\Delta^{\alpha}_N = \begin{cases} \pi_* ( \Delta_{N_G}) &\text{if } b_1 (N) >1 \\ (a-1)^2 \pi_* (\Delta_{N_G}) &\text{if } b_1 (N) = 1, im \pi_* = \langle a \rangle \end{cases} 
\end{equation*}

If $b_1 (N_G) = 1$, then $b_1 (N) = 1$ and $$\Delta^{\alpha}_N = \pi_{*} (\Delta_{N_G}).$$

\end{prop}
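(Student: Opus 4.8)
The plan is to route the whole computation through \textbf{Milnor--Turaev (Reidemeister) torsion} rather than manipulating the twisted Alexander modules directly, since torsion transforms cleanly under finite covers and is insensitive to the rank differences between $H$ and $H_G$. Two inputs drive the argument: a transfer (Shapiro's lemma) identity expressing the twisted torsion of $N$ as the pushforward of the ordinary torsion of the cover $N_G$, and the classical dictionary between Reidemeister torsion and the Alexander polynomial of a $3$-manifold, whose exact form depends on the first Betti number. The three cases in the statement then emerge purely from bookkeeping the $(t-1)$-factors in this dictionary.

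For the first input, I would exploit that $\alpha$ is used with its \emph{regular} representation: the coefficient system on $N$ is $\mathbb{Z}[G]\otimes_{\mathbb{Z}}\mathbb{Z}[H]$, where $\pi_1(N)$ acts on $\mathbb{Z}[G]$ through $\alpha$ and on $\mathbb{Z}[H]$ through the abelianization $\phi$. Shapiro's lemma gives a chain isomorphism
\begin{equation*}
C_*(N;\mathbb{Z}[G]\otimes_{\mathbb{Z}}\mathbb{Z}[H]) \;\cong\; C_*(N_G;\mathbb{Z}[H]),
\end{equation*}
where on the right $\pi_1(N_G)=\ker\alpha$ acts on $\mathbb{Z}[H]$ through $\phi|_{\ker\alpha}=\pi_*\circ\phi_G$, with $\phi_G\colon\pi_1(N_G)\to H_G$ the abelianization of the cover. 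Thus the right-hand complex is the $\mathbb{Z}[H_G]$-complex computing $\Delta_{N_G}$, base-changed along the ring map $\pi_*\colon\mathbb{Z}[H_G]\to\mathbb{Z}[H]$. At the level of torsions this yields $\tau^{\alpha}(N)=\pi_*\bigl(\tau(N_G)\bigr)$ up to a unit of $\mathbb{Z}[H]$, which is the multiplicativity of torsion under the finite cover.

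For the second input, I would invoke the \textbf{Milnor--Turaev formula}: for a $3$-manifold with $b_1>1$ the torsion equals the Alexander polynomial up to units, whereas for $b_1=1$ there is a correction by the square of $(\text{generator}-1)$, coming from $H_0$ and its Poincar\'e-dual top group. Concretely, $\Delta_{N_G}=\tau(N_G)$ if $b_1(N_G)>1$ and $\Delta_{N_G}=(t-1)^2\tau(N_G)$ if $b_1(N_G)=1$, with $t$ a generator of $H_G$; likewise $\Delta_N^{\alpha}=\tau^{\alpha}(N)$ if $b_1(N)>1$ and $\Delta_N^{\alpha}=(a-1)^2\tau^{\alpha}(N)$ if $b_1(N)=1$, where $a$ generates $\operatorname{im}\pi_*$. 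Combining with $\tau^{\alpha}(N)=\pi_*(\tau(N_G))$ gives the three cases at once. If $b_1(N_G)>1$ and $b_1(N)>1$, no factor appears and $\Delta_N^{\alpha}=\pi_*(\Delta_{N_G})$. If $b_1(N_G)>1$ and $b_1(N)=1$, only $N$ contributes a correction and $\Delta_N^{\alpha}=(a-1)^2\pi_*(\Delta_{N_G})$. If $b_1(N_G)=1$ (which forces $b_1(N)=1$, since $b_1$ cannot drop under a finite cover by the injectivity of the pullback on rational cohomology), then since $\pi_*$ is a ring map with $\pi_*(t)=a$ we have $\pi_*((t-1)^2)=(a-1)^2$, the two corrections cancel, and $\Delta_N^{\alpha}=\pi_*(\Delta_{N_G})$.

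The hardest part will be making the transfer identity $\tau^{\alpha}(N)=\pi_*(\tau(N_G))$ genuinely precise: one must pin down the $\mathbb{Z}[H_G]$-module structure on $\mathbb{Z}[H]$ realizing the base change, control the ambiguity of torsion up to units $\pm H$ so that the polynomial equalities hold on the nose, and fix the direction of $\pi_*\colon H_G\to H$ (the homology transfer, identified via Poincar\'e duality with the map on $H=H^2/\mathrm{Tors}$ used in the paper). A secondary subtlety is justifying that the $b_1=1$ correction for the twisted invariant is exactly $(a-1)^2$ with $a$ the \emph{image} generator and power precisely $2$: this requires computing the twisted $H_0$ and $H_3$ for the regular representation and checking that the chosen normalization of $\Delta_N^{\alpha}$ produces this universal factor rather than a boundary-dependent power of $(a-1)$, so that the cancellation in the last case is exact.
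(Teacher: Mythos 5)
The paper itself contains no proof of this proposition: it is imported verbatim from Friedl--Vidussi \cite[Proposition 3.6]{fv} as an external input, so there is no internal argument to compare yours against. What you have written is, in substance, a reconstruction of the proof in that cited source: Friedl--Vidussi likewise identify the $(\alpha\otimes\phi)$-twisted chain complex of $N$ with the $\mathbb{Z}[H]$-complex of the cover $N_G$ (Shapiro's lemma, i.e.\ base change along $\pi_*\colon\mathbb{Z}[H_G]\to\mathbb{Z}[H]$), and then convert torsion into Alexander polynomials via the Milnor--Turaev correspondence, with the $(t-1)^2$-type corrections in the $b_1=1$ cases yielding exactly your three-case bookkeeping. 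Your case analysis is sound, including the use of $b_1(N)\le b_1(N_G)$ to force $b_1(N)=1$ in the last case and the observation that $\pi_*(t)=a^{\pm1}$ makes the two corrections cancel up to the usual unit ambiguity $\pm H$. The two caveats you flag at the end are the genuine technical content and should not be treated as routine: when $b_1(N_G)>b_1(N)$ the ring map $\pi_*$ is not injective and does not extend to quotient fields, so the transfer identity $\tau^{\alpha}(N)=\pi_*(\tau(N_G))$ has to be formulated at the level of polynomial (or ratio-of-polynomial) representatives, which exist precisely because of the Turaev dictionary; and the exponent of the $(t-1)$-correction is boundary-dependent ($2$ for closed $N$, $1$ for a single toral boundary component), so the exact cancellation when $b_1(N_G)=1$ requires the boundary contributions of $N$ and $N_G$ to match, which is supplied by the Shapiro identification of the twisted $H_0$ and $H_3$ of $N$ with those of $N_G$. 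Modulo making those two points precise, your proposal is a complete and faithful rendering of the argument behind the cited result.
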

 We call $\phi \in H^1 (N)$ {\it fibered} if $\phi$ is dual to a fiber of a fibration $N$ over $S^1.$ Friedl and Vidussi proved the vanishing theorem of the twisted Alexander polynomial. 
\begin{thm}\label{thm39}\cite[Theorem 2.3]{fv2} Let $N$ be a compact, orientable, connected 3-manifold with (possibly empty) boundary consisting of tori. If $\phi \in H^1(N)$ is not fibered, then there exsits an epimorphism $\alpha : \pi_1 (N) \to G$ onto a finite group $G$ such that $$\Delta^{\alpha}_{N, \phi} = 0.$$
\end{thm}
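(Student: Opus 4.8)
The plan is to reduce the vanishing of the twisted Alexander polynomial to the vanishing of an \emph{ordinary} Alexander polynomial on a finite cover, and then to construct the required cover by a subgroup-separability argument. By Proposition \ref{prop38}, for an epimorphism $\alpha : \pi_1(N) \to G$ onto a finite group with associated cover $N_G$, the twisted polynomial $\Delta^{\alpha}_{N,\phi}$ is the image under $\pi_{*}$ of the ordinary Alexander polynomial of $N_G$ taken with the pulled-back class $\phi_G$ (up to the explicit factor $(a-1)^2$ when $b_1(N)=1$). Hence $\Delta^{\alpha}_{N,\phi}$ vanishes as soon as $\Delta_{N_G,\phi_G} = 0$, and since $\Delta_{N_G,\phi_G} = 0$ is equivalent to the Alexander module $H_1$ of the $\phi_G$-infinite-cyclic cover of $N_G$ having positive rank over $\mathbb{Q}[t^{\pm 1}]$ (equivalently, that cover having $b_1 = \infty$), it suffices to produce a single finite cover $N_G \to N$ whose $\phi_G$-infinite-cyclic cover has infinite first Betti number.

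First I would record the fibering criterion that drives the whole argument. By Stallings, $(N,\phi)$ fibers over $S^1$ exactly when $\ker(\phi : \pi_1(N) \to \mathbb{Z})$ is finitely generated; and by Thurston and Gabai a primitive non-fibered $\phi$ has the property that cutting $N$ along a Thurston-norm minimizing surface $S$ dual to $\phi$ yields a taut sutured manifold $(M,\gamma)$ that is \emph{not} a product $S \times [0,1]$. This failure of productness is the geometric engine: in a product sutured manifold the inclusions of the two copies $R_{\pm}$ of $S$ carry all of $H_1(M)$, so the infinite cyclic cover is a telescope of finitely generated pieces and $b_1$ stays finite; in a genuinely non-product $(M,\gamma)$ there is extra homology inside $M$ not seen on $R_{\pm}$.

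Next I would make that extra homology visible in a finite cover. The plan is to use that the surface subgroups $\pi_1(R_{\pm}) \hookrightarrow \pi_1(M)$ are separable: Scott's theorem handles the Seifert-fibered and graph pieces of the geometric decomposition, and the hyperbolic pieces are covered by the subgroup separability of geometrically finite subgroups coming from the Wise--Agol theory of virtually special groups. Separability lets me pass to a finite cover $(\tilde M, \tilde \gamma)$ in which a loop witnessing non-productness becomes non-trivial in $H_1$, and by iterating I can force $b_1(\tilde M)$ to exceed the product bound $b_1(R_{+})$. Reassembling the translates of $\tilde M$ along the sutures according to the deck action of $t$ --- that is, computing $H_1$ of the infinite cyclic cover by a Mayer--Vietoris telescope governed by the maps induced on $H_1(R_{\pm})$ --- propagates this growth to an infinitely generated Alexander module, giving $\Delta_{N_G,\phi_G} = 0$ and hence $\Delta^{\alpha}_{N,\phi} = 0$.

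The hard part will be the finite-cover construction itself: upgrading mere non-productness of $(M,\gamma)$ into genuine, \emph{persistent} homology growth that survives the telescoping into the infinite cyclic cover rather than being cancelled when the infinitely many copies are glued along $t$. This needs the full force of subgroup separability for $3$-manifold groups --- and thus geometrization together with the Wise--Agol machinery on the hyperbolic pieces --- plus careful control of how the gluing isomorphisms act on $H_1(R_{\pm})$, so that the newly created classes are not annihilated in the long exact sequence of the cover. Managing this interaction between the separability construction and the $\mathbb{Z}[t^{\pm 1}]$-module structure of the Alexander module is the crux.
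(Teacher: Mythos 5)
This statement is not proved in the paper at all: it is quoted verbatim from Friedl--Vidussi \cite[Theorem 2.3]{fv2} and used as a black box (its only role here is to feed into Proposition \ref{thm36}). So the only meaningful comparison is with the published Friedl--Vidussi proof, and against that standard your proposal is a roadmap of the known strategy rather than a proof. Your opening reduction is sound in spirit: by a Shapiro's-lemma argument, vanishing of the twisted polynomial follows once the ordinary Alexander module of the induced finite cover $N_G$ has positive rank over $\mathbb{Q}[t^{\pm 1}]$, equivalently once the $\phi_G$-infinite-cyclic cover of $N_G$ has infinite first Betti number. (Note, though, that Proposition \ref{prop38} as stated relates $\Delta^{\alpha}_N$ to $\Delta_{N_G}$, not $\Delta^{\alpha}_{N,\phi}$ to $\Delta_{N_G,\phi_G}$; to make your first paragraph literally correct you need the $\phi$-reduced version recorded in Remark \ref{rmk311}.)

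The genuine gap is in the second half, and it is twofold. First, separability of $\pi_1(R_\pm)$ in $\pi_1(M)$ produces finite-index subgroups of $\pi_1(M)$, the cut-open sutured manifold, whereas the theorem demands an epimorphism defined on $\pi_1(N)$: a finite cover of $M$ need not be the restriction of any finite cover of $N$, and manufacturing a finite quotient of $\pi_1(N)$ whose induced cover of $M$ still exhibits the non-product homology is a substantial portion of the Friedl--Vidussi argument, not a formality. Second, even granting such a cover, your assertion that the new homology ``propagates to an infinitely generated Alexander module'' is exactly the point at issue: classes created in one copy of $\tilde{M}$ can be annihilated by the gluing isomorphisms in the Mayer--Vietoris telescope of the infinite cyclic cover, and excluding this cancellation is precisely where the sutured-hierarchy induction and the delicate choice of finite quotient --- the content of essentially all of \cite{fv2} and its Annals predecessor \cite{fv} --- are required. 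You explicitly name this step as ``the crux'' and defer it, so the proposal reduces the theorem to its hardest step without carrying that step out; as an outline it tracks the actual proof in the literature, but as a proof it is incomplete.
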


\begin{prop}\label{thm36} When $b_1 (Y) = 1$ and $Y$ is not fibered, there exists a normal finitely sheeted covering space $\pi : \tilde{Y} \to Y$ such that $\pi_{*} (\Delta_{\tilde{Y}}) = 0,$ where $\pi_{*} : \mathbb{Z}[H(\tilde{Y})] \to \mathbb{Z}[H(Y)]$ is an induced homomorphism by the covering map $\pi$.
\end{prop}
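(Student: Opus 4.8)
The goal is to produce, for a non-fibered $3$-manifold $Y$ with $b_1(Y)=1$, a finite normal cover $\pi:\tilde Y\to Y$ whose Alexander polynomial pushes forward to zero under the induced map $\pi_*:\mathbb{Z}[H(\tilde Y)]\to\mathbb{Z}[H(Y)]$. The plan is to combine the vanishing theorem for twisted Alexander polynomials with the comparison between twisted polynomials of $Y$ and ordinary polynomials of a cover. Since $b_1(Y)=1$, fix a generator $\phi\in H^1(Y)$; because $Y$ is not fibered, $\phi$ is not a fibered class, so Theorem \ref{thm39} applies.

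First I would invoke Theorem \ref{thm39}: since $\phi$ is not fibered, there is an epimorphism $\alpha:\pi_1(Y)\to G$ onto a finite group $G$ with $\Delta^{\alpha}_{Y,\phi}=0$. Let $\pi:\tilde Y\to Y$ be the finite normal covering corresponding to $\ker\alpha$, so that $Y_G=\tilde Y$ in the notation of Proposition \ref{prop38}. Next I would relate the vanishing twisted polynomial $\Delta^{\alpha}_Y$ to the push-forward of the ordinary polynomial $\Delta_{\tilde Y}$ using Proposition \ref{prop38}. That proposition expresses $\Delta^{\alpha}_N$ in terms of $\pi_*(\Delta_{N_G})$ in every case; concretely, whether $b_1(\tilde Y)>1$ or $b_1(\tilde Y)=1$, the twisted polynomial is $\pi_*(\Delta_{\tilde Y})$ up to a nonzero factor of the form $(a-1)^2$. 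Therefore $\Delta^{\alpha}_{Y,\phi}=0$ forces $\pi_*(\Delta_{\tilde Y})=0$, which is exactly the desired conclusion.

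The one subtlety I would track carefully is the compatibility of the two polynomials appearing in the two cited results: Theorem \ref{thm39} produces a vanishing \emph{one-variable} twisted polynomial $\Delta^{\alpha}_{Y,\phi}$ associated to the class $\phi$, whereas Proposition \ref{prop38} is phrased in terms of the (possibly multivariable) twisted polynomial $\Delta^{\alpha}_Y$. Because $b_1(Y)=1$, the full cohomology direction is one-dimensional and $\phi$ generates $H^1(Y)$ up to torsion, so the relevant twisted Alexander polynomial $\Delta^{\alpha}_Y$ coincides (up to units and the conventions of evaluation at $\phi$) with $\Delta^{\alpha}_{Y,\phi}$. I would state this identification explicitly and note that the vanishing of one yields the vanishing of the other, so that the hypothesis of Proposition \ref{prop38} is met with a zero left-hand side.

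The main obstacle, and the step I expect to require the most care, is precisely this bookkeeping of which polynomial vanishes and verifying that the nonzero prefactor $(a-1)^2$ (in the case $b_1(\tilde Y)=1$) cannot itself be a zero divisor that would spoil the implication $\Delta^{\alpha}_Y=0\Rightarrow\pi_*(\Delta_{\tilde Y})=0$. Since $\mathbb{Z}[H(Y)]\cong\mathbb{Z}[t^{\pm1}]$ is an integral domain when $b_1(Y)=1$ and $a$ is a nontrivial group element, the factor $(a-1)$ is a nonzero, hence non-zero-divisor, element; so $\pi_*(\Delta_{\tilde Y})$ must vanish. Once this is checked, the remaining cases of Proposition \ref{prop38} give the conclusion directly, completing the proof.
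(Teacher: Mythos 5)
Your proposal is correct and takes essentially the same route as the paper, which likewise obtains $\alpha$ from Theorem \ref{thm39}, passes to the cover corresponding to $\ker\alpha$, and applies Proposition \ref{prop38}; your write-up just makes explicit the bookkeeping (the identification of $\Delta^{\alpha}_{Y,\phi}$ with $\Delta^{\alpha}_{Y}$ when $b_1(Y)=1$, and the non-zero-divisor argument) that the paper leaves implicit. One minor slip worth fixing: the prefactor $(a-1)^2$ appears in the case $b_1(\tilde Y)>1$, $b_1(Y)=1$, not in the case $b_1(\tilde Y)=1$, but since in either case the prefactor is a non-zero-divisor in $\mathbb{Z}[H(Y)]\cong\mathbb{Z}[t^{\pm 1}]$, your conclusion stands unchanged.
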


\begin{proof}
 This statement is straightforward based on Proposition \ref{prop38} and Theorem \ref{thm39}. We can pick an epimorphism $\alpha : \pi_1(Y) \to G$ onto a finite group $G$ such that the covering space $\tilde{Y}$ over $Y$ corresponding to $\ker \alpha$ satisfies that $\pi_{*} ( \Delta_{\tilde{Y}}) = 0 \in \mathbb{Z}[H(Y)].$
\end{proof}

\begin{rmk}\label{rmk311} In case of $b_1 (Y) > 1$, we cannot have the result in Proposition \ref{thm36}, however we have a slightly weaker result. 
From \cite[Equation (5)]{fv2},
\begin{equation*} 
\Delta^{\alpha}_{N,\phi} = \begin{cases} (t^{\text{div}\phi_G} -1 )^2 \phi ( \Delta^{\alpha}_{N}) &\text{if } b_1 (N) >1 \\  \phi (\Delta^{\alpha}_{N}) &\text{if } b_1 (N) = 1. \end{cases} 
\end{equation*}
 Theorem \ref{thm39} asserts that we can pick $\phi \in H^1(N)$ such that $\Delta^{\alpha}_{N, \phi} = 0$. For both cases, $\phi ( \Delta^{\alpha}_N) = 0.$ Combining with Proposition \ref{prop38}, $\phi ( \pi_{*} ( \Delta_{N_G})) = 0.$ We will visit this remark later and explain the case $b_1 (Y) > 1.$ 
\end{rmk}

 The Seiberg-Witten invariants of symplectic manifolds are well-known for the canonical $Spin^c$-structure. 
\begin{thm}[\cite{tau}, \cite{tau2}]\label{thm38} Let $X$ be a symplectic four manifold with symplectic form $\omega$. Let $K_X \in H^2 (X, \mathbb{Z})$ be the canonical class of the symplectic structure. If $b^{+}_2 (X) >1$, then $SW_X ( K_X) = \pm 1.$ Moreover, for each $Spin^c$-structure $\mathfrak{s}$ such that $SW_X (s) \ne 0$, $$|c_1 (\mathfrak{s}) \smile \omega| \le c_1 (\mathfrak{t}) \smile \omega$$ and the equality holds if and only if $\mathfrak{s} = \mathfrak{t}$ or $\bar{\mathfrak{t}}$.
\end{thm}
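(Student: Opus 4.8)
The statement is a theorem of Taubes, and the plan is to reproduce its proof by deforming the Seiberg--Witten equations by a large multiple of the symplectic form. First I would fix an almost complex structure $J$ compatible with $\omega$ and a compatible metric $g$; relative to $J$ the canonical $Spin^c$-structure $\mathfrak{t}_0$ has positive spinor bundle $S^{+} = \Lambda^{0,0} \oplus \Lambda^{0,2}$, so a positive spinor splits as $\psi = (\alpha, \beta)$ with $\alpha$ a function and $\beta$ a $(0,2)$-form, and $c_1(\mathfrak{t}_0) = -K_X$. I would then study the family of perturbed equations $F_A^{+} = q(\psi) - i r \omega + i\mu$, $D_A \psi = 0$, with perturbation parameter $r \to \infty$ and a fixed small self-dual correction $\mu$. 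Since $\omega$ is self-dual, pairing the curvature equation against $\omega$ is the main source of control.

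For the nonvanishing part I would exhibit the distinguished solution attached to the symplectic connection, in which (after Taubes' rescaling $\psi = \sqrt{r}(\alpha,\beta)$) the component $\alpha$ is the constant section $1$ and $\beta = 0$. The analytic core is to prove that for $r$ large this is the unique solution for $\mathfrak{t}_0$ and that its linearization is invertible. This rests on Taubes' a priori estimates: the Weitzenb\"ock formula for $D_A$ together with the $\omega$-paired curvature equation gives uniform pointwise bounds $|\alpha|^2 \le 1 + O(r^{-1})$ and $|\beta|^2 = O(r^{-1})$, confining every solution to a small neighborhood of the canonical one. A signed count of the single nondegenerate solution then gives $SW_X(\mathfrak{t}_0) = \pm 1$, and since $b_2^{+}(X) > 1$ this is independent of $(g, r, \mu)$. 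By the conjugation symmetry $SW_X(\bar{\mathfrak{s}}) = \pm SW_X(\mathfrak{s})$, valid when $b_2^{+} > 1$, the structure $\mathfrak{t} = \bar{\mathfrak{t}_0}$ with $c_1(\mathfrak{t}) = K_X$ also has $SW_X(\mathfrak{t}) = \pm 1$, i.e. $SW_X(K_X) = \pm 1$.

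For the extremal inequality I would take an arbitrary $\mathfrak{s}$ with $SW_X(\mathfrak{s}) \ne 0$, so that the $r$-perturbed equation admits a solution for every $r$. Pairing its curvature equation against $\omega$ and using $\tfrac{i}{2\pi}[F_A] = c_1(\mathfrak{s})$ expresses $c_1(\mathfrak{s}) \cdot [\omega]$ through the integral of $|\alpha|^2 - |\beta|^2$; the uniform bound, which says the vacuum solution pointwise maximizes $|\psi|$, then shows that the canonical structure is extremal for this pairing, giving $c_1(\mathfrak{s}) \cdot \omega \ge c_1(\mathfrak{t}_0) \cdot \omega = -K_X \cdot \omega$ in the limit $r \to \infty$. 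Applying the same estimate to the conjugate $\bar{\mathfrak{s}}$, which has $SW_X(\bar{\mathfrak{s}}) \ne 0$ and $c_1(\bar{\mathfrak{s}}) = -c_1(\mathfrak{s})$, furnishes the matching upper bound and hence $|c_1(\mathfrak{s}) \cdot \omega| \le K_X \cdot \omega = c_1(\mathfrak{t}) \cdot \omega$. For the equality clause I would observe that equality saturates the pointwise estimate, forcing $|\alpha| \to 1$ and $\beta \to 0$ in the large-$r$ limit so that the associated pseudoholomorphic curve is empty; Taubes' uniqueness analysis then identifies the solution with the vacuum, so $\mathfrak{s} = \mathfrak{t}_0 = \bar{\mathfrak{t}}$ (or, via the conjugate computation, $\mathfrak{s} = \mathfrak{t}$).

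The hardest step throughout is the nonlinear analysis as $r \to \infty$: establishing the uniform pointwise bounds on $(\alpha,\beta)$, the concentration of the curvature, and the invertibility of the linearized operator near the canonical solution. It is precisely these estimates — not any formal identity — that force uniqueness of solutions for large $r$ and pin down the equality case, and they constitute the technical heart of \cite{tau} and \cite{tau2}.
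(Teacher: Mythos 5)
This theorem is quoted in the paper directly from Taubes (\cite{tau}, \cite{tau2}) without any internal proof, so the only meaningful comparison is with Taubes' original argument. Your sketch faithfully reproduces that argument --- the large-$r$ perturbation $F_A^{+} = q(\psi) - ir\omega + i\mu$, the canonical solution $(\alpha,\beta) = (1,0)$ for the canonical $Spin^c$-structure with $c_1 = -K_X$, the a priori estimates $|\alpha|^2 \le 1 + O(r^{-1})$, $|\beta|^2 = O(r^{-1})$ forcing uniqueness for large $r$, the conjugation symmetry to obtain $SW_X(K_X) = \pm 1$, and the $\omega$-pairing argument of \cite{tau2} for the extremal inequality and its equality case --- so it is correct and takes essentially the same approach as the cited source, with the hard nonlinear analysis appropriately attributed to Taubes rather than redone.
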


\subsection{Proof of Theorem \ref{mainapp}}\label{sub35}
 First, we consider a case in which Y is a surface bundle over $S^1$.  If Y is a surface bundle over $S^1$, then $S^1 \times Y$ has a symplectic structure according to Friedl and Vidussi \cite{fv}. $X^Y$ is a normal connected sum of two symplectic $4$-manifolds $S^1 \times Y$ and $X$ along symplectically embedded surfaces $\Sigma$.  As a result, $(X^Y, \omega)$ is symplectic. Moreover, based on adjuction formula, the canonical symplectic form $K_{\omega}$ satisfies that $\langle K_{\omega}, [\Sigma] \rangle = 2g - 2$ since $[\Sigma]^2 = 0$ in $X^Y.$
 
 Before moving to the other case, we define the following notations.
\begin{itemize}
\item Let $Z$ be $S^1 \times Y \setminus nbd(\Sigma)$, where $nbd(\Sigma)$ is an neighborhood of $\Sigma$, homeomorphic to $D^2 \times \Sigma$. 
\item Let $X'$ be $X \setminus nbd(\Sigma)$. 
\item $X^Y$ is obtained from gluing $Z$ and $X'$ along their boundaries.

\end{itemize}

 Second, suppose that Y is not a surface bundle over $S^1$ and that $X^Y$ has a symplectic structure. Now, by Lemma \ref{covering}, there exists a covering map $p : \tilde{X^Y} \to X^Y$ satisfying the conditions stated in the lemma. If we consider the submanifold $p^{-1} (Z) \subset \tilde{X^Y},$ then $p^{-1} (Z)$ has $r$ components. We pick one component among them and call it $M_1'.$ The boundary of $M_1'$ consists of $l$ copies of $S^1 \times \tilde{\Sigma}.$ We fill the boundaries of $M_1'$ by $D^2 \times \tilde{\Sigma}$ trivially. Then, we get a closed manifold $M_1$ which is homeomorphic to $S^1 \times \tilde{Y}.$ Moreover, let $M_2' = \tilde{X^Y} \setminus M_1'.$ Then we fill the boundaries of $M_2'$ by $D^2 \times \Sigma$ trivially. We call the resulting manifolds $M_2.$ Remark that $M_1$ is homeomorphic to $S^1 \times \tilde{Y}.$ Conversely, if we do a normal connected sum between $M_1, M_2$ along $l$ copies of $\tilde{\Sigma}$ repeatedly, then the resulting manifold becomes $\tilde{X^Y}$. The first procedure involves gluing two separated $4$-manifolds, while the next involves self-gluing $l-1$ times. We use the two gluing theorems for $M_1, M_2$ and $\tilde{X^Y}.$ We will use Theorem \ref{multiple_gluing} when a $Spin^c$-structure $\tilde{P}$ is given on $\tilde{X^Y}.$

 Henceforth, we will say that $K \in H^2 (S^1 \times C, \mathbb{Z})$ satisfies the pull-back condition for an orientable surface $C$ when $K|_{S^1 \times C} \in H^2 (S^1 \times C, \mathbb{Z}) = \rho^{*} (k_0)$ where $\rho : S^1 \times C \to C$ is a natural projection and $k_0 \in H^2(C, \mathbb{Z})$ satisfies that $\langle k_0,C \rangle = 2g(C) - 2$. From now on, $H^2(-)$ denotes a cohomology group with integer coefficients if not specified.
 
 Suppose that $\omega$ is a symplectic $2$-form on $X^Y$. Let $K_{\omega} \in H^2 (X^Y, \mathbb{Z})$ be the canonical class of the symplectic structure $\omega$ on $X^Y.$ Since $K_{\omega}$ is not torsion from the assumption $\langle K_{\omega} , [\Sigma] \rangle \ne 0,$ we can perturb $\omega \in H^2 (X^Y, \mathbb{R})$ to be a rational cohomology class and then scale $\omega$ properly so that $[\omega] \in H^2(X^Y, \mathbb{Z}).$ We will show that $K_{\omega}$ satisfies the pull-back condition. Based on adjunction inequality, for any closed curve $\alpha$ which is homologically nontrivial in $C$, $T^2 = S^1 \times \alpha \in S^1 \times C$, $\langle K_{\omega}, S^1 \times \alpha \rangle = 0.$
 Moreover, from the assumption $$\langle K_{\omega}, [\Sigma] \rangle = 2g(\Sigma) -2.$$ Consequently, $K_{\omega}$ satisfies the pull-back condition.
 
  Let $\Omega$ be the pull back $2$-form of $\omega$ on $\tilde{X^Y}.$ Let $K_{\Omega} \in H^2(\tilde{X^Y}, \mathbb{Z})$ be the canonical class of the symplectic structure $\Omega$ on $\tilde{X^Y}$. $K_{\Omega} = p^{*} ( K_{\omega} ).$ Likewise, $K_{\Omega} \in H^2(S^1 \times \tsigma)$ fulfills the pull-back condition on every component. In conclusion, we can use Theorem \ref{mainapp} for this canonical structure $K_{\Omega}.$ $i_1^{*}, i_2^{*}$ denote the natural maps $H^2(\tilde{X^Y}, \mathbb{Z}) \to H^2(M_1')$ and $H^2(\tilde{X^Y}, \mathbb{Z}) \to H^2(M_2')$ induced by inclusion maps $M_1', M_2' \hookrightarrow \tilde{X^Y}.$ Henceforth, $K_{M_1'}, K_{M_2'}$ denote $i_1^{*} (K_{\Omega}), i_2^{*} (K_{\Omega})$ respectively. We pick $k \in H^2 (\tilde{X^Y}, \mathbb{Z})$ which also satisfies the condition that $k|_{S^1 \times \tilde{\Sigma}} = \rho^{*}(k_0)$. Then, the restriction $(k - K_{\Omega})$ on $S^1 \times \tilde{\Sigma}$ becomes zero. Moreover, let $k_{M_1'} = i_1^{*} ( k), k_{M_2'} = i_2^{*} ( k).$ Then, $k_{M_1'} - K_{M_1'}$ and $k_{M_2'} - K_{M_2'}$ vanish on the boundary.

 We focus on the element $(k - K_{\Omega}) \smile [\Omega]$ in $H^4(\tilde{X^Y}, \mathbb{Z}) \cong \mathbb{Z}.$ We can decompose the integer corresponding to $(k - K_{\Omega}) \smile [\Omega]$ in the following way. First, $[F] = PD[(k-K_{\Omega})] \in H_2 (\tilde{X^Y})$, where a surface $F \in C_2 (\tilde{X^Y})$ has no intersection with $S^1 \times \tilde{\Sigma}.$ Next, $F$ can be decomposed into $F_1 + F_2$ for $F_1 \in C_2 (M_1'), F_2 \in C_2 (M_2').$ Then,
\begin{equation}\label{restriction}
 (k - K_{\Omega}) \smile [\Omega] = PD[F_1] \smile i_1^{*}[\Omega] + PD[F_2] \smile i_2^{*}[\Omega].
\end{equation} The equation \ref{restriction} verifies the relationship between $Spin^c$-structures $k \in H^2(\tilde{X^Y}, \mathbb{Z})$ and its restrictions to $M_1', M_2'.$ 
  
  We will add the equation in Theorem \ref{multiple_gluing},  $(-1)^{\star} \sum_{s \in \mathcal{K}(k)} SW_{X} (s) = \sum SW_{X_1} (s_1) SW_{X_2} (s_2)$ for $k \in H^2(\tilde{X^Y})$ satisfying that 
\begin{enumerate}
\item  $k|_{S^1 \times \tilde{\Sigma}} = \rho^{*}(k_0)$ where $\rho : S^1 \times \tilde{\Sigma} \to \tilde{\Sigma}$ and $k_0 \in H^2 (\tilde{\Sigma})$ satisfies that $\langle k_0, [\tilde{\Sigma}] \rangle = 2g(\tilde{\Sigma})-2$.
\item $k \smile [\Omega] = K_{\Omega} \smile [\Omega] \in \mathbb{Z}$
\item $k^2 = K_{\Omega}^2.$

\end{enumerate}  
  
  It is known that only $K_{\Omega}$ satisfies all of the above properties (1)-(3) and $SW \ne 0$. Therefore, the left hand side is exactly equal to $SW(K_\Omega)$ which is $+1$ or $-1$ from the Theorem \ref{thm38}. The right hand side becomes $\displaystyle \sum_{(\hat{z_1}, \hat{z_2}) \in H^2 ( M_1) \times H^2 (M_2)} SW_{M_1} (\hat{z_1}) SW_{M_2} (\hat{z_2})$ for $(\hat{z_1}, \hat{z_2}) \in H^2 ( M_1) \times H^2 (M_2)$ satisfying 
\begin{enumerate}
\item $\hat{z_1}^2 + \hat{z_2}^2 = k^2 - (4g-4)l$
\item There exist $[F_1], [F_2] \in H_2 (M_1'), H_2 (M_2')$ such that \begin{itemize}\item $F_1 \cap \partial M_1' = F_2 \cap \partial M_2' = \phi$ \item $(\hat{z_1}|_{M_1'} - K_{M_1'}) = j_1 (PD[F_1])$ and $(\hat{z_2}|_{M_2'} - K_{M_2'}) = j_2 (PD[F_2])$ where $j_1 : H^2(M_1', \partial M_1') \to H^2(M_1'), j_2 : H^2(M_2', \partial M_2') \to H^2(M_2')$ are natural maps.\end{itemize}
\item $[F_1], [F_2]$ determined in (2) satisfy that 
$PD[F_1] \smile i_1^{*} [\Omega] + PD[F_2] \smile i_2^{*} [\Omega] = 0.$
\end{enumerate}
 Now we will see 
\begin{equation}\label{inner}
\sum_{(\hat{z_1}, \hat{z_2}) \in H^2 ( M_1) \times H^2 (M_2)} SW_{M_1} (\hat{z_1}) SW_{M_2} (\hat{z_2}) = \sum_{\hat{z_2}} SW_{M_2} (\hat{z_2}) (\sum_{\hat{z_1}} SW_{M_1} (\hat{z_1})).
\end{equation}
 We want to prove that the inner sum $\displaystyle \sum SW_{M_1}(\hat{z_1})$ in the right hand side becomes zero. Remark that if $SW_{M_1} (\hat{z_1}) \ne 0$, then $\hat{z_1}^2 = 0$. Therefore, the first condition does not need to be considered in the inner sum of Equation \ref{inner}. Let $Z_m$ be a set of $\hat{z_1} \in H^2 (S^1 \times \tilde{Y})$ such that the corresponding $F_1$ satisfies $PD[F_1] \smile i_1^{*} [\Omega] = m.$ In other words, we will show that $\displaystyle \sum_{\hat{z_1} \in Z_m} SW_{M_1}(\hat{z_1}) =0$ for all $m$. 

 We have the formula for the Seiberg Witten invariants. For $h \in H^2(S^1 \times \tilde{Y})$, $[h]$ denotes the quotient element in $H(S^1 \times \tilde{Y}) \cong H^2(S^1 \times \tilde{Y}) / Tors.$ If two elements $x,y \in H^2 (S^1 \times \tilde{Y})$ satisfy that $[x]=[y]$, then $x \in Z_m$ is equivaltent to $y \in Z_m$ since the result of cup products does not depend on the torsion part.   
   Let $$\Delta_{\tilde{Y}} = \sum_{h \in H(\tilde{Y})} g_h \cdot h$$ for $g_h \in \mathbb{Z}.$ We have the natural projection $p : S^1 \times \tilde{Y} \to \tilde{Y}$ and the induced map $p^{*} : H(\tilde{Y}) \to H(S^1 \times \tilde{Y}) \cong H(\tilde{Y}) \oplus ( \mathbb{Z} \otimes H^1(\tilde{Y}) / Tors).$ The image of $p^{*}$ is included in the first summand. 
 
 First, suppose that $b_1 (\tilde{Y}) >1$. Then from Theorem \ref{underline_eq}, $$\underline{SW}_{S^1 \times \tilde{Y}} = \xi \Phi_2 (\Delta_{\tilde{Y}}).$$ From Kunneth formula, $H^2(S^1 \times \tilde{Y}) = H^2(\tilde{Y}) \oplus (H^1(S^1) \otimes H^1(\tilde{Y}))$. Since $\xi \in \pm p^{*}(H(\tilde{Y}))$, $\xi$ supports on the first summand of $H (S^1 \times \tilde{Y}).$ Conclusively,
 $$\underline{SW}_{M_1} = \xi \Phi_2 (\Delta_{\tilde{Y}})$$ 
$$\Longrightarrow \sum_{h \in H (M_1)} SW_{M_1} (h) [h] = \sum_{h \in H(S^1 \times \tilde{Y})} g_h \cdot (2h+\xi)$$
Therefore, we can summarize the equality:
\begin{enumerate}
\item $\displaystyle \sum_{\substack{h \in H^2(S^1 \times \tilde{Y}),\\ [h] = 2l+\xi \in H(S^1 \times \tilde{Y})}} SW(h) = g_l$ for $ l \in H (\tilde{Y}) \subset H(S^1 \times \tilde{Y}).$
\item Otherwise, $\displaystyle \sum_{[h] = k} SW(h) = 0$ where $k$ is not represented by $2l+\xi.$
\end{enumerate}

 We define $\phi : H^2 (S^1 \times \tilde{Y}) \to H(Y)$ to be the composition of trivial quotient maps $H^2(S^1 \times \tilde{Y}) \to H(S^1 \times \tilde{Y}) \to H(\tilde{Y})$ and $\pi_{*} : H(\tilde{Y}) \to H(Y)$. Therefore,
\begin{equation}\label{cov}
\sum_{h \in \pi_{*}^{-1}(x)} g_h = 0
\end{equation}
for each $x \in H(Y).$
 This is equivalent with 
\begin{equation}\label{cov2}
\sum_{\substack{h \in H^2(S^1 \times \tilde{Y}) \\ \phi (h) = x} } SW_{S^1 \times \tilde{Y}} (h) = 0
\end{equation}
for each $x \in H(Y).$ 

\begin{lem}\label{lem314} If $\hat{z}, \hat{w} \in  H^2 (\tilde{Y}) \subset H^2(S^1 \times \tilde{Y})$ satisfy that $\phi(\hat{z}) = \phi(\hat{w})$, then $$\hat{z} \in Z_k \iff \hat{w} \in Z_k$$
\end{lem}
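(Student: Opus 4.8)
The strategy is to isolate the unique way $\hat z$ enters the definition of $Z_k$ and to show that this datum factors through $\phi$. By the defining condition (2) of the relevant summation, the surface $F_1$ attached to $\hat z$ is characterized by $j_1(PD[F_1]) = \hat z|_{M_1'} - K_{M_1'}$, so membership $\hat z\in Z_k$ is decided entirely by the integer $PD[F_1]\smile i_1^*[\Omega]$. Writing $F_1^{\hat z}, F_1^{\hat w}$ for the surfaces attached to $\hat z$ and $\hat w$, the common term $K_{M_1'}$ cancels and it suffices to prove
\[
PD[F_1^{\hat z}]\smile i_1^*[\Omega]-PD[F_1^{\hat w}]\smile i_1^*[\Omega]=\big\langle\, i_1^*[\Omega],\,PD_{M_1'}\big((\hat z-\hat w)|_{M_1'}\big)\big\rangle=0
\]
whenever $\phi(\hat z)=\phi(\hat w)$; this immediately yields the claimed equivalence.

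First I would use that $\Omega=p^*\omega$ is pulled back along the covering $p\colon\tilde{X^Y}\to X^Y$, so that $i_1^*[\Omega]=(p|_{M_1'})^*([\omega]|_Z)$. By the projection formula the pairing above equals $\langle[\omega],\,p_*\,PD_{M_1'}((\hat z-\hat w)|_{M_1'})\rangle$, now computed against the globally defined, integral class $[\omega]$ on $X^Y$. Since $\hat z,\hat w\in H^2(\tilde Y)\subset H^2(S^1\times\tilde Y)=H^2(M_1)$ are pulled back from $\tilde Y$, their Poincaré duals in the closed manifold $M_1\cong S^1\times\tilde Y$ have the product form $[S^1]\times\gamma_{\hat z}$ and $[S^1]\times\gamma_{\hat w}$, where $\gamma_{\hat z}=PD_{\tilde Y}[\hat z]$ and $\gamma_{\hat w}=PD_{\tilde Y}[\hat w]$ in $H_1(\tilde Y)$; because $F_1$ is disjoint from $\partial M_1'$ this class may be computed inside $M_1$ and then pushed forward. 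The covering restricts to the product map that is $z\mapsto z^m$ on the circle and $\pi$ on $\tilde Y$, so the pushforward is $m\,[S^1]\times\pi_*(\gamma_{\hat z}-\gamma_{\hat w})$.

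It then remains to invoke the hypothesis. The equality $\phi(\hat z)=\phi(\hat w)$ says precisely that $\pi_*[\hat z]=\pi_*[\hat w]$ in $H(Y)=H^2(Y)/\text{Tors}$, equivalently that $\pi_*(\gamma_{\hat z}-\gamma_{\hat w})$ is torsion in $H_1(Y)$ under Poincaré duality. Consequently $[S^1]\times\pi_*(\gamma_{\hat z}-\gamma_{\hat w})$ is a torsion class in $H_2(S^1\times Y)$, hence its image in $H_2(X^Y)$ is torsion as well, and an integral cohomology class annihilates torsion homology. Therefore the displayed pairing vanishes and $\hat z\in Z_k\iff\hat w\in Z_k$. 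This is the natural upgrade of the remark preceding the lemma, which already records that $Z_m$-membership depends only on the class $[\,\cdot\,]\in H(S^1\times\tilde Y)$; the new input is that pushing down along $\pi$ and pairing with the integral $[\omega]$ discards everything except $\phi$.

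The main obstacle is the bookkeeping on the manifolds with boundary and across the covering: I must verify that $F_1$ is genuinely closed and disjoint from $\partial M_1'$ so that its class lives in $H_2(M_1')$ and maps to $H_2(M_1)$; that the relative Poincaré--Lefschetz dual $j_1(PD[F_1])$ is compatible with the closed-manifold duality on $M_1\cong S^1\times\tilde Y$ used to obtain the product form; and that the projection formula together with the pushforward $p_*$ legitimately transports the computation to $X^Y$, with the resulting class being torsion. Once these identifications are secured, the torsion-pairing step is immediate.
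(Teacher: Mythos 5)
Your opening moves coincide with the paper's own proof: membership in $Z_k$ is governed by the integer $PD[F_1]\smile i_1^{*}[\Omega]$, and since $\Omega=p^{*}\omega$, the identity $i_1^{*}[\Omega]=p'^{*}([\omega]|_{Z})$ plus the projection formula pushes the whole computation down the covering $p':M_1'\to Z$. The divergence, and the genuine gap, is in how you finish. The paper never leaves $Z$: it asserts (via its second commutative diagram) that the pushed-forward cycles $p'(F_z)$ and $p'(F_w)$ agree in $Z$, so their pairings with $[\omega|_{Z}]$ agree. You instead push the difference class onward into the closed manifold $S^1\times Y$, identify it there with $m[S^1]\times\pi_{*}(\gamma_{\hat z}-\gamma_{\hat w})$, and conclude it is torsion. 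But the pairing you must kill is $\langle \iota_{*}p'_{*}([F_1^{\hat z}]-[F_1^{\hat w}]),[\omega]\rangle$, computed through the inclusion $\iota:Z\hookrightarrow X^Y$, whereas your torsion statement concerns the image under the other inclusion $\lambda:Z\hookrightarrow S^1\times Y$. The sentence ``hence its image in $H_2(X^Y)$ is torsion as well'' invokes a map $H_2(S^1\times Y)\to H_2(X^Y)$ that does not exist: $S^1\times Y$ is not a subspace of $X^Y$; only $Z$ is common to both, and the two inclusion-induced maps on $H_2$ have different kernels. Concretely, by Mayer--Vietoris the kernel of $\lambda_{*}:H_2(Z)\to H_2(S^1\times Y)$ is generated by the images of the tori $S^1\times\alpha$, $\alpha\in H_1(\Sigma)$, lying on $\partial Z$; such a torus dies when the filling $D^2\times\Sigma$ is restored, but in $X^Y$ it is glued to $X'$ instead, need not be torsion there, and nothing forces $\langle[\omega],S^1\times\alpha\rangle=0$ (the adjunction argument in the paper gives $\langle K_{\omega},S^1\times\alpha\rangle=0$, a statement about $K_{\omega}$, not about $[\omega]$; you cannot assume $\omega$ has a product structure near $S^1\times\Sigma$, since that is what the theorem is trying to rule out). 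So knowing that $\lambda_{*}$ of the difference class is torsion gives no control over its pairing with $[\omega]$, and your final step fails exactly where the covering picture and the symplectic picture separate.

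There is also a secondary, more repairable inaccuracy in the step ``this class may be computed inside $M_1$.'' The defining condition $j_1(PD[F_1])=\hat z|_{M_1'}-K_{M_1'}$ determines the image of $[F_1^{\hat z}]-[F_1^{\hat w}]$ in $H_2(M_1)$ only modulo the kernel of the restriction $H^2(M_1)\to H^2(M_1')$, which is spanned by the Poincar\'e duals of the core surfaces $[\tilde{\Sigma}_i]$ of the fillings; these push forward to integer multiples of $[\Sigma]$, typically non-torsion, so even your intermediate claim in $H_2(S^1\times Y)$ needs an argument for why these correction terms are absent or cancel. The paper's proof, terse as it is about why $p'(F_z)=p'(F_w)$, avoids both problems by keeping the comparison at the level of cycles in $Z$ itself, which is precisely the control your route loses and cannot recover from torsion information downstairs.
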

\begin{proof} Let $F_z, F_w$ be the $2$-chain corrsponding to $\hat{z}, \hat{w}$ respectively. We observe that the covering $p$ restricted to $M_1'$ is isomorphic to the covering $p_1 : S^1 \times \tilde{Y} \to S^1 \times Y$ restricted to $M_1'.$ Let $p' : M_1' \to p(M_1') = Z.$ Recall that $Z \cong S^1 \times Y \setminus D^2 \times \Sigma.$ The diagram \ref{diagram} commutes. The horizontal maps are covering maps $p, p'$ and the vertical maps are natural inclusion maps. 
\begin{figure}
\begin{tikzcd}
  \tilde{X^Y} \arrow[r, "p"] 
    & X^Y  \\
  M_1' \arrow[r, "p"] \arrow[u, hookrightarrow]
&Z \arrow[u, hookrightarrow]
\end{tikzcd}
\caption{Diagram \ref{diagram}}\label{diagram}
\end{figure}
 Recall that $\Omega = p^{*} [ \omega] $. Therefore, $i_1^{*} [\Omega] = p'^{*} [\omega|_{Z}].$ Moreover, we also have the following commuting diagram \ref{diagram2}.

\begin{figure}
\begin{tikzcd}
  \tilde{M_1} \arrow[r, "p"] 
    & S^1 \times Y  \\
  M_1' \arrow[r, "p"] \arrow[u, hookrightarrow]
&Z \arrow[u, hookrightarrow]
\end{tikzcd}
\caption{Diagram \ref{diagram2}}\label{diagram2}
\end{figure}
 Hence,
\begin{align*}
PD[F_z] \smile i_1^{*} [\Omega] &= \langle [F_z], i_1^{*} [\Omega] \rangle\\
&= \langle [F_z], p'^{*} [\omega|_{Z}] \rangle\\
&= \langle [p'(F_z)], [\omega|_{Z}] \rangle\\
&= PD[p'(F_z)] \smile [\omega|_{Z}] \\
&= \langle p'(F_z), [\omega|_{Z}] \rangle 
\end{align*}

 The second commutative diagram indicates that $p'(F_z) = p'(F_w).$ Therefore, the statement is true. 

\end{proof}
 Finally, based on Lemma \ref{lem314}, the inner sum of Equation \ref{inner} can be decomposed into the sum of Equation \ref{cov2} for some $x$. Therefore, the inner sum of Equation \ref{inner} becomes zero. 
 
 Second, suppose that $b_1(\tilde{Y}) = 1.$ Since $b_1(Y)=1$, $\pi_{*} : H(\tilde{Y}) \to H(Y)$ is a homomorphism from $\mathbb{Z} \to \mathbb{Z}.$ Since this homomorphism is not trivial, $\pi_{*}$ is injective. Therefore, $\pi_{*} (\Delta_{\tilde{Y}}) = 0$ implies that $\Delta_{\tilde{Y}} = 0.$ Therefore, $\displaystyle \sum_{[h] = l} SW(h) = 0$ for all $l \in H(S^1 \times \tilde{Y}).$ Therefore, the inner sum of the right hand side is also zero. 
 
 Therefore, this is a contradiction because the Equation \ref{eqn2} is not true. This implies that $X^Y$ does not have a symplectic structure and hence concludes the proof of Theorem \ref{mainapp}.

\begin{rmk} We add a remark in a more general setting. According to Remark \ref{rmk311}, in case of $b_1 (Y) > 1,$ the corresponding equation to Equation \ref{cov} is for each $m \in \mathbb{Z},$ $$ \sum_{\psi \smile \pi^{*} (h) = m} g_h = 0 $$ for some non-fibered $\psi \in H^1(Y).$ Then, to extend the results to the case of $b_1 (Y)>1,$ Lemma \ref{lem314} should be transformed into: for two $\hat{z}, \hat{w}$ satisfying that $\psi \smile \pi(\hat{z}) = \psi \smile \pi(\hat{w}),$ the statement is true. However, we cannot use the same proof since $2$-form $\omega$ on $X^Y$ does not have a correspondence with $2$-form on $S^1 \times Y.$ 
\end{rmk}

\end{document}